\documentclass[11pt,a4paper]{amsart}
\usepackage{mathptmx, amssymb,amscd,latexsym, eulervm}
\usepackage[all]{xy}
\usepackage{changepage} 

\usepackage{xcolor}
\definecolor{citepink}{HTML}{AA3377}

\usepackage{amsmath}
\usepackage{amsthm}
\usepackage{mathdots}
\usepackage[onehalfspacing]{setspace}
\usepackage{amsopn}
\usepackage{etaremune, mathtools} 
\usepackage{stmaryrd} 
\usepackage{tabularx}
\usepackage{amsfonts}
\usepackage{paralist}
\usepackage{aliascnt}
\usepackage[initials, lite]{amsrefs}
\usepackage{amscd}
\usepackage{blkarray}
\usepackage{mathbbol}
\usepackage{setspace}
\usepackage[inner=2.4cm,outer=2.4cm, bottom=3.2cm]{geometry}
\usepackage{tikz, tikz-cd}
\usepackage{calligra,mathrsfs}
\usepackage{comment}

\usepackage[
  colorlinks=true,
  citecolor=citepink,
  linkcolor=blue,
  urlcolor=blue
]{hyperref}

\usepackage{tikz}
\usetikzlibrary{matrix}
\usetikzlibrary{arrows,calc}
\usetikzlibrary{decorations.markings,intersections,positioning,calc}

  \tikzset{mylabel/.style  args={at #1 #2  with #3}{
    postaction={decorate,
    decoration={
      markings,
      mark= at position #1
      with  \node [#2] {#3};
 } } } }
\allowdisplaybreaks

\BibSpec{collection.article}{%
	+{}  {\PrintAuthors}                {author}
	+{,} { \textit}                     {title}
	+{.} { }                            {part}
	+{:} { \textit}                     {subtitle}
	+{,} { \PrintContributions}         {contribution}
	+{,} { \PrintConference}            {conference}
	+{}  {\PrintBook}                   {book}
	+{,} { }                            {booktitle}
	+{,} { }                            {series}
	+{, vol.} { }                            {volume}
	+{,} { }                            {publisher}
	+{,} { \PrintDateB}                 {date}
	+{,} { pp.~}                        {pages}
	+{,} { }                            {status}
	+{,} { \PrintDOI}                   {doi}
	+{,} { available at \eprint}        {eprint}
	+{}  { \parenthesize}               {language}
	+{}  { \PrintTranslation}           {translation}
	+{;} { \PrintReprint}               {reprint}
	+{.} { }                            {note}
	+{.} {}                             {transition}
	+{}  {\SentenceSpace \PrintReviews} {review}
}
\AtBeginDocument{%
	\def\MR#1{}
}

\makeatletter
\@namedef{subjclassname@2020}{%
	\textup{2020} Mathematics Subject Classification}
\makeatother

\newcommand{\lk}{\mathrm{lk}}

\newcommand{\tr}{\operatorname{tr}}
\newcommand{\prk}{\operatorname{p}.\operatorname{rk}}
\newcommand{\cprk}{\operatorname{cop}.\operatorname{rk}}
\newcommand{\fprk}{\operatorname{fp}.\operatorname{rk}}
\newcommand{\cfprk}{\operatorname{cofp}.\operatorname{rk}}

\newcommand{\set}[1]{\left\{ #1 \right\}}

\newcommand{\setcond}[2]{\set{#1 \ \colon \ #2}}

\def\opn#1#2{\def#1{\operatorname{#2}}}
\opn\Cl{Cl} \opn\deg{deg} \opn\Stab{Stab} \opn\aff{aff} \opn\div{div}
\opn\cone{cone} \opn\End{End} \opn\mod{mod}  \opn\pdim{pdim} \opn\diag{diag} \opn\vert{vert} \opn\m{m} \opn\V{V} \opn\HT{ht}
\opn\Cone{Cone} \opn\Pyr{Pyr} \opn\max{max} \opn\min{min} \opn\int{int} \opn\rev{rev} \opn\ker{ker} \opn\lat{lat} \opn\pull{pull}
\opn\cok{coker} \opn\ant{ant}
\opn\inte{int} \opn\projdim{pd}
\opn\ch{ch}

\newcommand{\kk}{\mathbb{k}}
\newcommand{\KK}{\mathbb{K}}

\newcommand{\ZZ}{\normalfont\mathbb{Z}}
\newcommand{\setZ}{\normalfont\mathbb{Z}}
\newcommand{\setQ}{\normalfont\mathbb{Q}}

\newcommand{\mm}{{\normalfont\mathfrak{m}}}

\newcommand{\QQ}{\mathbb{Q}}
\newcommand{\pp}{{\normalfont\mathfrak{p}}}

\newcommand{\bm}{{\normalfont\mathbf{m}}}

\newcommand{\Ker}{\normalfont\text{Ker}}

\newcommand{\ann}{\normalfont\text{Ann}}

\newcommand{\Supp}{\normalfont\text{Supp}}
\newcommand{\Ass}{{\normalfont\text{Ass}}}

\newcommand{\Min}{{\normalfont\text{Min}}}

\newcommand{\Hom}{\normalfont\text{Hom}}

\newcommand{\Spec}{\normalfont\text{Spec}}

\def\f0{\mathbf{0}}

\def\1{\mathbf{1}}

\newtheorem{theorem}{Theorem}[section]

\newaliascnt{headcor}{headthm}

\aliascntresetthe{headcor}

\newaliascnt{headconj}{headthm}

\aliascntresetthe{headconj}

\newaliascnt{corollary}{theorem}
\newtheorem{corollary}[corollary]{Corollary}
\aliascntresetthe{corollary}

\newaliascnt{claim}{theorem}

\aliascntresetthe{claim}

\newaliascnt{lemma}{theorem}
\newtheorem{lemma}[lemma]{Lemma}
\aliascntresetthe{lemma}

\newaliascnt{conjecture}{theorem}

\aliascntresetthe{conjecture}

\newaliascnt{proposition}{theorem}
\newtheorem{proposition}[proposition]{Proposition}
\aliascntresetthe{proposition}

\theoremstyle{definition}
\newaliascnt{definition}{theorem}
\newtheorem{definition}[definition]{Definition}
\aliascntresetthe{definition}

\newaliascnt{notation}{theorem}
\newtheorem{notation}[notation]{Notation}
\aliascntresetthe{notation}

\newaliascnt{condition}{theorem}

\aliascntresetthe{condition}

\newaliascnt{example}{theorem}
\newtheorem{example}[example]{Example}
\aliascntresetthe{example}

\newaliascnt{examples}{theorem}

\aliascntresetthe{examples}

\newaliascnt{remark}{theorem}
\newtheorem{remark}[remark]{Remark}
\aliascntresetthe{remark}

\newaliascnt{question}{theorem}

\aliascntresetthe{question}

\newaliascnt{questions}{theorem}

\aliascntresetthe{questions}

\newaliascnt{problem}{theorem}

\aliascntresetthe{problem}

\newaliascnt{construction}{theorem}

\aliascntresetthe{construction}

\newaliascnt{setup}{theorem}
\newtheorem{setup}[setup]{Setup}
\aliascntresetthe{setup}

\newaliascnt{algorithm}{theorem}

\aliascntresetthe{algorithm}

\newaliascnt{observation}{theorem}

\aliascntresetthe{observation}

\newaliascnt{defprop}{theorem}

\aliascntresetthe{defprop}

\newaliascnt{fact}{theorem}

\aliascntresetthe{fact}

\usepackage{mymacros_common}

\begin{document}

\title[Trace ideals of exterior powers of the module of differentials]{Trace ideals of exterior powers of the module of differentials}

\author[R. Ishizuka]{Ryo Ishizuka}
\address{Department of Mathematics, Institute of Science Tokyo, 2-12-1 Ookayama, Meguro, Tokyo 152-8551}
\email{ishizuka.r.ac@m.titech.ac.jp}

\author[S. Miyashita]{Sora Miyashita}
\address{Department of Pure And Applied Mathematics, Graduate School Of Information Science And Technology, Osaka University, Suita, Osaka 565-0871, Japan}
\email{u804642k@ecs.osaka-u.ac.jp}

\thanks{2020 {\em Mathematics Subject Classification\/}: Primary 13A02, 13N05; Secondary 13N15, 13H05, 14B05.}

\keywords{Trace ideals, modules of differentials, higher differential forms, graded ring, polynomial rank, regular ring, singular locus, locally nilpotent derivation.}

\begin{abstract}
For each $i \geq 0$, we study the trace ideal of the $i$-th exterior power of the module of differentials. We show that these ideals characterize the polynomial rank of graded rings and the formal power series rank of complete local rings, namely the maximal number of variables for a polynomial or formal power series extension over a subring. For the top exterior power, we introduce the \emph{top differential trace} and prove that it precisely defines the singular locus of reduced equidimensional local or graded rings. Motivated by this, we introduce and investigate \emph{nearly regular rings}, which are certain Noetherian rings whose top differential trace contains the maximal ideal.
\end{abstract}

\maketitle

\tableofcontents

\section{Introduction} \label{Introduction}

For a commutative ring $S$ and an $S$-module $M$, the ideal
$$\tr_S(M) \defeq \sum_{f\in \Hom_S(M,S)} f(M)$$
of $S$ is called the trace ideal of $M$.
In recent years, trace ideals have been studied extensively, following Lindo's systematic development of their basic theory~\cite{lindo2017trace}.
In particular, Herzog--Hibi--Stamate~\cite{herzog2019trace} introduced the notion of \emph{nearly Gorenstein rings}, specifically, Cohen--Macaulay local rings whose maximal ideal is contained in the trace ideal of the canonical module.
Their definition is motivated by the fact that, for a Cohen--Macaulay local ring $S$ with canonical module $\omega_S$, one has $\tr_S(\omega_S)=S$ if and only if $S$ is Gorenstein.
More generally, the non-Gorenstein locus of $S$ is precisely the closed subset of $\Spec(S)$ defined by $\tr_S(\omega_S)$.
Since then, the trace ideal of the canonical module has attracted attention as an invariant measuring how far a ring is from being Gorenstein, and has been investigated from various perspectives~(\cites{herzog2019trace,DaoKobayashiTakahashi2021,FicarraHerzogStamateTrivedi2024,Ficarra2025,MiyashitaVarbaro2025,KumashiroMiyashita2025,Kimura2026Schubert}).

Motivated by this, we shift our focus to the modules of differentials.
Let $A$ be a local ring essentially of finite type over a perfect field $\kk$.
In commutative algebra and algebraic geometry, \emph{the module of differentials} $\Omega^1_{A/\kk}$ and its exterior powers $\Omega^i_{A/\kk} \defeq \bigwedge_A^i \Omega^1_{A/\kk}$ are objects of fundamental importance.
It is classically known that $A$ is regular if and only if $\Omega^1_{A/\kk}$ is a free $A$-module of the appropriate rank~(see \cite[Theorem 8.8]{hartshorne2013algebraic}); moreover, if $A$ is equidimensional, the appropriate Fitting ideal of $\Omega^1_{A/\kk}$ defines the singular locus of $A$ (see \cite[Corollary 16.21]{E}).
Thus, the module of differentials naturally encodes the singularities of the ring.
On the other hand, the top exterior power $\Omega^{\dim A}_{A/\kk}$ recovers the canonical module when $A$ is a regular local ring.
Therefore, even in the singular case, $\Omega^{\dim A}_{A/\kk}$ can be viewed as a natural analog of the canonical module.
This perspective leads us to investigate what geometric information is carried by the trace ideal of $\Omega^{\dim A}_{A/\kk}$, which we call \emph{the top differential trace}.

In this paper, we study this perspective from both the local and graded points of view. 
Following \cite{costa1979Polynomial}, for a $\ZZ_{\ge 0}$-graded ring and complete local rings, we define their polynomial rank and formal power series rank as follows:

\begin{definition}
\begin{enumerate}[\rm (1)]
\item (\cite{costa1979Polynomial}) Let $R = \bigoplus_{n \geq 0} R_n$ be a $\ZZ_{\geq 0}$-graded ring. 
The \emph{polynomial rank} of $R$, denoted by $\prk(R)$, is the supremum of the set of integers $n \geq 0$ such that $R$ is isomorphic, as a graded ring, to a graded polynomial extension $T[x_1,\dots,x_n]$ of some graded subring $T$ of $R$.
\item Let $S$ be a complete local $S_0$-algebra. 
The \emph{formal power series rank} of $S$, denoted by $\fprk(S)$, is the supremum of the set of integers $n \geq 0$ such that $S$ is isomorphic to a formal power series ring $T[\![x_1, \dots, x_n]\!]$ over some complete local $S_0$-subalgebra $T$.
\end{enumerate}
\end{definition}

With this terminology in place, our main results on local or graded rings are as follows.
The following statements summarize \autoref{thm:EquivTrRank}, \autoref{thm:GradedCaseTrRank}, \autoref{thm:LocalEquivRegular}, and \autoref{thm:verynice2}.

\begin{theorem}\label{Theorem:OMEDETOU1}
Let $\kk$ be a field of characteristic $0$. 
Let $R = \bigoplus_{n \geq 0} R_n$ be a Noetherian $\ZZ_{\geq 0}$-graded
ring such that $R_0 = \kk$,
and let $A$ be a local $\kk$-algebra essentially of finite type.
Then the following hold:
\begin{enumerate}[\rm(a)]
\item For any integer $k \geq 0$:
\begin{enumerate}[\rm (i)]
\item Then $\tr_R(\Omega^{\dim(R)-k}_{R/\kk})=R$ if and only if $\prk(R) \geq \dim(R)-k$;
\item Let $\widehat{A}$ be the $\mfrakm_A$-adic completion of $A$. 
Then $\tr_{\widehat{A}}(\widehat{\Omega}^{\dim A-k}_{\widehat{A}/\kk})=\widehat{A}$ if and only if $\fprk(\widehat{A}) \geq \dim A-k$.
\end{enumerate}

\item Assume further that $R$ and $A$ are reduced. Then the following hold:
\begin{enumerate}[\rm (i)]
\item $\tr_R(\Omega_{R/\kk}^{\dim R})=R$ if and only if $R$ is regular.
Moreover, if $R$ is equidimensional, we have
$$V\!\Bigl(\tr_R\!\bigl(\Omega^{\dim R}_{R/\kk}\bigr)\Bigr)=\Sing(R) \quad\text{and}\quad \sqrt{\tr_R\!\bigl(\Omega^{\dim R}_{R/\kk}\bigr)} \subseteq \sqrt{\tr_R(\omega_R)};$$
\item $\tr_A(\Omega_{A/\kk}^{\dim A})=A$ if and only if $A$ is regular.
Moreover, if $A$ is equidimensional, we have
$$V\!\Bigl(\tr_A\!\bigl(\Omega^{\dim A}_{A/\kk}\bigr)\Bigr)=\Sing(A) \quad\text{and}\quad \sqrt{\tr_A\!\bigl(\Omega^{\dim A}_{A/\kk}\bigr)} \subseteq \sqrt{\tr_A(\omega_A)};$$
\end{enumerate}
\end{enumerate}
\end{theorem}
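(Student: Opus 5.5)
The plan is to reduce everything to a statement about surjections $\Omega^i \to S$, using the universal property of $\Omega^1$. Write $S$ for either $R$ or $\widehat{A}$. For $\widehat{A}$ we work with the completed module $\widehat{\Omega}^1_{\widehat{A}/\kk}$, whose dual is $\operatorname{Der}_\kk(\widehat{A})$.

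\textbf{Step 1: trace equals $S$ iff there are suitable derivations.}
First note that $\tr_S(M)=S$ exactly when some homomorphism $M\to S$ is surjective; in the graded or local setting this is the same as some $f(M)\not\subseteq \mathfrak m$. Next, $\Hom_S(\Omega^i,S)$ contains the maps
\[ \omega_1\wedge\dots\wedge\omega_i \mapsto \det\bigl(D_a(\omega_b)\bigr) \]
for derivations $D_1,\dots,D_i$. Since $\Omega^i$ is generated by products $dx_1\wedge\dots\wedge dx_i$, we have $\tr(\Omega^i)=S$ if and only if there exist a homomorphism $\varphi$ and elements $x_1,\dots,x_i$ with $\varphi(dx_1\wedge\dots\wedge dx_i)$ a unit. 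The key sub-claim is that one may then choose derivations $D_1,\dots,D_i$ and elements $x_1,\dots,x_i$ of $\mathfrak m$ (homogeneous of positive degree in the graded case) with $\det(D_a(x_b))$ a unit. After a linear change this becomes $D_a(x_b)=\delta_{ab}$ modulo $\mathfrak m$.

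I expect this sub-claim to be the main obstacle, because a general alternating form on $\Omega^i$ need not be a determinant of derivations. I would handle it by reducing modulo $\mathfrak m$. The map $\varphi$ induces a nonzero alternating form on $\Omega^1\otimes\kk=\mathfrak m/\mathfrak m^2$ (in the graded case, homogeneous generators). Pulling back along $\Omega^1\to\mathfrak m/\mathfrak m^2$, this means the $i$-th exterior power of the image of $\Omega^1$ in the relevant free quotient is nonzero. So $\Omega^1$ has a free direct summand of rank $i$, namely $\Omega^1 \to S^i$ surjective. Extracting such a summand from a unimodular element of $\bigwedge^i$ is standard over local rings, and I would verify it via Nakayama.

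\textbf{Step 2: from derivations to polynomial or power series structure, proving (a).}
In characteristic $0$, having $D_a(x_b)=\delta_{ab}$ exactly on suitable generators gives the splitting. In the graded case, take the $D_a$ homogeneous of degree $-\deg x_a$ and make them locally nilpotent, commuting, and triangular. Then the classical slice theorem for locally nilpotent derivations gives $R=\ker(D)[x]$, applied inductively, yielding $\prk\ge i$. In the complete case, I would use the Zariski/Nagata-type lemma: if $D(x)$ is a unit, then $S=\ker(D)[\![x]\!]$, with the kernel obtained via the exponential-type projection $\sum (-x)^n D^n/n!$, which converges $\mathfrak m$-adically. Induct on $i$.

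The converse is immediate. If $S=T[x_1,\dots,x_n]$ or $T[\![x_1,\dots,x_n]\!]$, the partials $\partial/\partial x_a$ give a surjection $\Omega^n\to S$ sending $dx_1\wedge\dots\wedge dx_n\mapsto 1$, and $\Omega^{n'}$ for $n'\le n$ works similarly. Applying this with $i=\dim-k$ gives (a).

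\textbf{Step 3: part (b).}
By (a) with $k=0$, $\tr=S$ iff $S$ is a polynomial or power series ring over a graded or complete subring $T$ of dimension $0$. Reducedness together with $R_0=\kk$ (respectively, the residue field in the complete case) forces $T=\kk$, so $S$ is regular. Conversely, a regular $R$ is a polynomial ring, and for $A$ we use $\Omega^{\dim A}_{A/\kk}\cong A$ since $\kk$ is perfect. For the local statement in (b)(ii), trace commutes with flat completion, which lets us pass between $A$ and $\widehat{A}$.

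For $V(\tr)=\Sing$, localize. Trace commutes with localization for finitely presented modules, and $(\Omega^d)_\pp=\Omega^d_{S_\pp}$. In the equidimensional setting $\dim S_\pp$ relates to $d$ via $\dim S_\pp+\dim S/\pp=d$ and the rank of $\Omega^1$ at minimal primes, by reducedness and characteristic $0$. Regularity of $S_\pp$ makes $\Omega^1_\pp$ free of rank $d$, so $\tr_\pp=S_\pp$. Conversely, if $\Omega^d_\pp\to S_\pp$ is surjective, then $\Omega^1_\pp$ has a free summand of rank $d$, as in Step 1, and the Jacobian criterion (Fitting ideal, \cite[Corollary 16.21]{E}) gives regularity.

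Finally, $\Sing\supseteq V(\tr\omega)$ because regular rings are Gorenstein, which gives the radical inclusion.
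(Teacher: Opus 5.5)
\textbf{The local half of the statement is false as stated.} Your Step 1 uses $\Omega^1\otimes\kk=\mathfrak m/\mathfrak m^2$ and slices $x_b\in\mathfrak m$. Both fail when $\kappa:=\kappa(\mathfrak m_A)$ is transcendental over $\kk$. Then $0\to\mathfrak m/\mathfrak m^2\to\Omega^1_{A/\kk}\otimes_A\kappa\to\Omega_{\kappa/\kk}\to 0$ has a nonzero right-hand term, and a free summand of $\Omega^1_{A/\kk}$ can come entirely from that term.

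Take $A=\kk[s,x,y]_{(x,y)}/(xy)$. It is reduced and equidimensional, with $\dim A=1$ and residue field $\kk(s)$, and $\Omega^1_{A/\kk}=A\,ds\oplus(A\,dx\oplus A\,dy)/A(y\,dx+x\,dy)$. Hence $\tr_A(\Omega^1_{A/\kk})=A$ and $\tr_{\widehat A}(\widehat\Omega^1_{\widehat A/\kk})=\widehat A$. Yet $A$ is not regular, and $\fprk(\widehat A)=0$: a decomposition $\widehat A\cong T[\![z]\!]$ would force $T$ to be reduced, local and $0$-dimensional, hence a field, making $\widehat A$ a domain, which $\kk(s)[\![x,y]\!]/(xy)$ is not. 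So (a)(ii) with $k=0$ and the first two assertions of (b)(ii) fail. Here the slice is the unit $s$, and no $\kk$-derivation of $\widehat A$ has a slice in $\mathfrak m$, so Zariski's lemma cannot start.

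The paper's proof breaks at the same spot. \autoref{thm:LocalEquivRegular} invokes \autoref{thm:EquivTrRank}(2) with $S_0=\kk$, whereas \autoref{SetupSection3} requires $S_0^\times=S^\times$; that hypothesis is exactly what \autoref{prop:Yeahhhh}(2)(b) uses to put the slice in $\mathfrak m$. If $\kappa$ is algebraic over $\kk$, your argument is fine. Then $\Omega^1_{A/\kk}\otimes\kappa\cong\mathfrak m/\mathfrak m^2$, and every $\kk$-derivation kills the Hensel lift of $\kappa$ into $\widehat A$, so slices can be moved into $\mathfrak m$.

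\textbf{The inclusion $\operatorname{Sing}(S)\subseteq V(\tr_S(\Omega^d_{S/\kk}))$ is not proved.} From $\tr_{S_\pp}(\Omega^d_{S_\pp/\kk})=S_\pp$ you correctly get $\Omega^1_{S_\pp/\kk}\cong S_\pp^d\oplus N$. But $d$ is the generic rank, so $N$ is only known to be torsion, and $\operatorname{Fitt}_d(S_\pp^d\oplus N)=\operatorname{Fitt}_0(N)$ is proper unless $N=0$. The Jacobian criterion needs freeness, and proving $N=0$ is the whole content.

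The paper instead lowers the exponent to $\dim S_\pp$ via \autoref{prop:Yeahhhh}(1) and applies \autoref{thm:LocalEquivRegular} to $S_\pp$. For non-maximal $\pp$, however, $\kappa(\pp)$ is transcendental over $\kk$, which is precisely where that criterion fails: the ring $A$ above is $S_\pp$ for $S=\kk[s,x,y]/(xy)$ and $\pp=(x,y)$. So the information carried by the larger exponent must be kept.

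Your mod-maximal-ideal idea does exactly this if you count correctly. The map $\pp S_\pp/\pp^2S_\pp\to\Omega^1_{S_\pp/\kk}\otimes\kappa(\pp)$ is injective, with cokernel $\Omega_{\kappa(\pp)/\kk}$ of dimension $\operatorname{trdeg}_\kk\kappa(\pp)=d-\dim S_\pp$. Hence the rank-$d$ free quotient has rank at least $h:=\dim S_\pp$ on $\pp S_\pp/\pp^2S_\pp$. This gives $x_1,\dots,x_h\in\pp S_\pp$ and derivations with $D_a(x_b)=\delta_{ab}$. Iterating Zariski's lemma on $\widehat{S_\pp}$ then yields $\widehat{S_\pp}\cong T[\![x_1,\dots,x_h]\!]$ with $T$ reduced (by excellence) and $0$-dimensional, hence a field.

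\textbf{Smaller points on (a)(i).} Your route is a valid alternative to the paper's: you take a rank-$i$ free summand of $\Omega^1$ and then $i$ slices. The paper instead uses only the one-variable slice theorem, writes $S=T[x_1,\dots,x_e]$ with $e=\prk(S)$ maximal, decomposes $\Omega^{d-k}_{S/\kk}$, and base-changes to $T$ to force $\dim T\le k$. Two repairs are needed in yours.

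First, the sub-claim you flag as the main obstacle is immediate. If $\varphi(dx_1\wedge\cdots\wedge dx_i)=1$, then $f_a(\eta):=\varphi(dx_1\wedge\cdots\wedge\eta\wedge\cdots\wedge dx_i)$, with $\eta$ in slot $a$, satisfies $f_a(dx_b)=\delta_{ab}$ exactly. Moreover $D_a=f_a\circ d$ is homogeneous of degree $-\deg x_a$. This is the trick of \autoref{prop:Yeahhhh}(1) in every slot.

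Second, ``make them commuting and triangular'' is unjustified. Instead, after $S=T_1[x_1]$ with $T_1=\ker D_1$, note that $x_2,\dots,x_i\in T_1$. Then use the derivations $\epsilon\circ D_a|_{T_1}$ of $T_1$, where $\epsilon\colon S\to T_1$ sends $x_1\mapsto 0$. These are homogeneous of negative degree and satisfy $\epsilon D_a(x_b)=\delta_{ab}$.

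Finally, for regular $A$ the module $\Omega^1_{A/\kk}$ is free of rank $\dim A+\operatorname{trdeg}_\kk\kappa(\mathfrak m_A)$, so $\Omega^{\dim A}_{A/\kk}\not\cong A$ in general. Only a free summand is needed.
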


We recall that for a module $M$ over a local or graded ring $S$, the condition $\tr_S(M) = S$ is equivalent to $M$ possessing a free summand of rank at least (\autoref{rem1}). 
Thus, the first statements of (b)--(i) and (b)--(ii) in \autoref{Theorem:OMEDETOU1} assert that the regularity of $S$ is equivalent to the existence of a free summand in the top exterior power of differentials $\Omega^{\dim S}_{S/\kk}$. 
While the classical Jacobian criterion requires the freeness of the module $\Omega^1_{S/\kk}$, our result shows that a single free summand in the top exterior power is sufficient. 
We can give a geometric proof of this equivalence via Nash transforms under equidimentional assumption (\autoref{sect4}).

We note that several other ideals defining the singular locus are already known.
Classically, as mentioned above, the singular locus is defined by the appropriate Fitting ideal of $\Omega^1_{A/\kk}$.
More recently, Iyengar and Takahashi proved that, for a broad class of rings, \emph{the cohomological annihilator} also defines the singular locus~(\cite[Theorem 1.1]{iyengar2016annihilation}).
In contrast to these, \autoref{Theorem:OMEDETOU1} shows that, just as the trace ideal of the canonical module measures the difference from being Gorenstein, the trace ideal of the top exterior power of differentials measures the discrepancy from being regular.
By a technique known as Vasconcelos's trick~(\cite[Remark 3.3]{vasconcelos1991computing}), trace ideals can be easily computed from free presentations; in particular, the top differential trace is computable, for instance, in \texttt{Macaulay2} (\cite{M2}).

Let \(S\) be a commutative ring which is \(R\) or \(A\) in \autoref{Theorem:OMEDETOU1}.
Following the nearly Gorenstein rings introduced in Herzog--Hibi--Stamate (\cite{herzog2019trace}), we say that \(S\) is \emph{nearly regular} if \(\tr_S(\Omega_{S/\kk}^{\dim S})\supseteq \mathfrak{m}_S\), without assuming that \(S\) is Cohen--Macaulay.
By \autoref{Theorem:OMEDETOU1}, this notion can be viewed as a regular variant of nearly Gorenstein rings.
The final part of this paper investigates how close nearly regular rings are to regular rings: the condition is mild in dimension one, since every one-dimensional Noetherian graded ring over a field is nearly regular, but becomes much more restrictive for tensor products, where in characteristic zero it is equivalent to regularity.
In contrast, fiber products provide non-regular examples: the fiber product of two reduced equidimensional graded rings is nearly regular precisely when both factors are nearly regular and have the same dimension.
Finally, to illustrate this criterion in an important and familiar class of reduced graded rings, we apply it to {\it Stanley--Reisner rings}, which play a central role in combinatorial commutative algebra; under a purity assumption on the connected components, this yields a combinatorial characterization in terms of the underlying simplicial complex.

\subsection*{Outline}
In \autoref{sect2}, we review the basic facts on trace ideals that will be used throughout this paper.
In \autoref{sect3}, we study the trace ideals of exterior powers of modules of differentials over local or graded rings. We prove that the vanishing of trace ideals of higher differential modules characterizes the polynomial rank (\autoref{thm:GradedCaseTrRank} and \autoref{t1hm:EquivRegular1}).
In \autoref{sect5}, building on the results of \autoref{sect4}, we prove that for reduced equidimensional local or graded rings whose residue fields have characteristic $0$, the closed subset defined by the top differential trace coincides with the singular locus (\autoref{thm:verynice2}).
In \autoref{sect6}, based on the results up to \autoref{sect5}, and from the perspective of \cite[Definition 2.2]{herzog2019trace}, we introduce the notion of \emph{nearly regular rings} and investigate their properties.
In \autoref{sect4}, we prove the local version (\autoref{TraceOmegaRegular}) of \autoref{thm:GradedCaseTrRank} by algebraic-geometric methods.

\begin{setup}\label{setup1}
Throughout this paper, we use the following notation:
\begin{itemize}
\item Let $\ZZ_{\geq 0}$ be the monoid of non-negative integers;
\item Let $R = \bigoplus_{i \geq 0} R_i$ be a $\ZZ_{\geq 0}$-graded ring. Unless otherwise specified, we do not assume $R$ is Noetherian for the sake of generality. If the reader is only interested in the Noetherian case, it may be assumed throughout;
\item Whenever we assume that $R_0$ is a local ring with unique maximal ideal $\mfrakm_0$, $R$ is equipped with the unique graded maximal ideal $\mfrakm_R \defeq \mfrakm_0 + \bigoplus_{i > 0} R_i$. When there is no risk of confusion about $R$, we simply write $\mathfrak{m}_R$ as $\mathfrak{m}$;
\item Let \(A\) be a ring with the unique maximal ideal \(\mfrakm\). We say that \(A\) is {\it complete} if the canonical morphism \(A \to \widehat{A} \defeq \lim_{n \geq 0} A/\mfrakm^n\) is an isomorphism;
\item For a morphism of rings $A \to B$, let $\Omega^1_{B/A}$ denote the module of differentials of $B$ over $A$, which is a $B$-module. For each integer $k \geq 0$, we define the $k$-th module of differentials as the wedge product $\Omega^k_{B/A} \defeq \bigwedge_B^k \Omega^1_{B/A}$.
\end{itemize}
\end{setup}

\subsection{Acknowledgments}
The authors would like to thank Takanori Nagamine for introducing us to the literature on locally nilpotent derivations. They are also grateful to Kazufumi Eto, Kaito Kimura, Shinya Kumashiro, Naoyuki Matsuoka, Yuki Mifune, Yuya Otake, and Keiichi Watanabe for helpful discussions and valuable comments.
The first-named author was supported by JSPS KAKENHI Grant number 24KJ1085. The second-named author was supported by JSPS KAKENHI Grant number 25KJ1744.

\section{Preliminaries}\label{sect2}
The purpose of this section is to lay the groundwork for the discussions of our main results.

\begin{notation}
    Throughout this section, we follow \autoref{setup1} and assume that \(R_0\) has the only one maximal ideal \(\mfrakm_0\) unless otherwise specified. 
    Note that, since we do not assume the condition on \(R_0\) other than this, putting \(R_i = 0\) for \(i > 0\) is just a theory of local rings. So all the results in this section valid for local rings and their modules.
\end{notation}

\subsection{Trace ideals}

\begin{definition}
For a graded $R$-module $M$, the sum of all images of homomorphisms $\phi \in \Hom_R(M,R)$ is called the {\it trace} of $M$:
\[
\tr_R(M)\defeq \sum_{\phi \in \Hom_R(M,R)}\phi(M).
\]
\end{definition}

\begin{remark}\label{rem:interestingfiniteness}
Let $M$ be a graded $R$-module. Then, we have 
\[
\tr_R(M)=\sum_{\phi \in {}^*\Hom_R(M,R)}\phi(M).
\]
by \cite[Remark 2.2]{KumashiroMiyashita2025}, where \({}^*\Hom_R(M, R)\) is the internal hom of graded \(R\)-modules.
Note that throughout their paper, they assume the Noetherian property of \(R\) but this result does not require this.
\end{remark}

\begin{lemma}\label{rem1}
Let $M$ and $N$ be a graded $R$-module.
The following hold:
\begin{enumerate}[\rm (1)]
\item If there exists a graded surjective $R$-homomorphism  $M\twoheadrightarrow N$, then $\tr_R(M)\supseteq \tr_R(N)$;
\item $\tr_R(\wedge^k M) \supseteq \tr_R(\wedge^{k+1} M)$
holds for any $k>0$;
\item If $M$ is finitely presented, then $\tr_{R_\mathfrak p}(M_\mathfrak p)=\tr_R(M)R_\mathfrak p$~(see \cite[Proposition 2.8~(viii)]{lindo2017trace});
\item $\tr_R(M)=R$
if and only if
$M$ has an $R$-free summand;\footnote{The Noetherian case has been appeared in \cite[Remark 2.2~(4)]{celikbas2023traces}.}
\item $\tr_R(M \oplus N)=R$
if and only if $\tr_R(M)=R$ or $\tr_R(N)=R$.
\end{enumerate}
\end{lemma}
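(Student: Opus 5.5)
I will prove the five items in order. Items (1), (2) and (5) are formal consequences of the definition and of \autoref{rem:interestingfiniteness}. Item (3) is quoted from Lindo. Item (4) is the only one that uses the graded-local hypothesis on $R$.

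For (1), let $\pi\colon M\twoheadrightarrow N$ be the graded surjection. Every $\psi\in\Hom_R(N,R)$ gives $\psi\circ\pi\in\Hom_R(M,R)$, and $(\psi\circ\pi)(M)=\psi(N)$ because $\pi$ is onto. Summing over all $\psi$ gives $\tr_R(N)\subseteq\tr_R(M)$.

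For (2), I will show that $\tr_R(\wedge^{k+1}M)$ is generated by elements $\phi(m_1\wedge\cdots\wedge m_{k+1})$. Fix such a $\phi$ and the element $m_{k+1}$, and define $\phi'(x)\defeq\phi(x\wedge m_{k+1})$ for $x\in\wedge^kM$. This is an $R$-linear map $\wedge^kM\to R$, and
\[
\phi'(m_1\wedge\cdots\wedge m_k)=\phi(m_1\wedge\cdots\wedge m_{k+1}).
\]
Hence every generator of $\tr_R(\wedge^{k+1}M)$ lies in $\tr_R(\wedge^kM)$. Right multiplication by a homogeneous $m_{k+1}$ is graded, so graded-ness is no problem, and by \autoref{rem:interestingfiniteness} it is in any case irrelevant.

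For (3), I simply cite \cite[Proposition 2.8~(viii)]{lindo2017trace}. The finite-presentation hypothesis is what makes $\Hom_R(M,R)_\mathfrak p\cong\Hom_{R_\mathfrak p}(M_\mathfrak p,R_\mathfrak p)$, and the claimed equality follows from this.

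For (4), the direction ``$\Leftarrow$'' is immediate: the projection onto a free summand $R$ is surjective, so $1\in\tr_R(M)$. For ``$\Rightarrow$'', \autoref{rem:interestingfiniteness} lets me write $\tr_R(M)=\sum\phi(M)$ with $\phi$ ranging over homogeneous maps. Each $\phi(M)$ is then a graded ideal. The key point is that in $R$ every proper graded ideal lies in $\mfrakm_R=\mfrakm_0+\bigoplus_{i>0}R_i$:
\begin{itemize}
\item a homogeneous element of positive degree is already in $\bigoplus_{i>0}R_i$;
\item a homogeneous element of degree $0$ lies in $R_0$, and it is either in $\mfrakm_0$ or a unit.
\end{itemize}
So if every $\phi(M)$ were proper, their sum would lie in $\mfrakm_R$, contradicting $\tr_R(M)=R$. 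Thus some homogeneous $\phi$ has $\phi(M)$ containing a unit $u$ of $R$, and graded-ness lets me take $u\in R_0$. Choose $m\in M$ homogeneous of degree $-\deg\phi$ with $\phi(m)=u$. Then $R\to M$, $r\mapsto ru^{-1}m$, is a graded splitting of $\phi$, giving $M\cong R(-\deg\phi)\oplus\ker\phi$. This step, reducing to a single homogeneous map hitting a unit of degree $0$ without any Noetherian hypothesis, is the main obstacle. The argument above handles it using only the uniqueness of the graded maximal ideal.

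For (5), I use $\Hom_R(M\oplus N,R)=\Hom_R(M,R)\oplus\Hom_R(N,R)$, which gives $\tr_R(M\oplus N)=\tr_R(M)+\tr_R(N)$. If both summands are proper, each is contained in a proper graded ideal and hence in $\mfrakm_R$, as in (4), so the sum is proper. Alternatively, by (4), $M\oplus N$ has a graded free summand $R(a)$. Composing the splitting with the two projections, the identity of $R(a)$ factors as $\alpha+\beta$ through $M$ and $N$. Since $R(a)$ is graded-local, one of $\alpha$, $\beta$ is an isomorphism, and that summand has a free summand. The converse follows from (1) applied to the projections $M\oplus N\twoheadrightarrow M$ and $M\oplus N\twoheadrightarrow N$.
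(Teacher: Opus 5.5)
Your proposal is correct. For (1), (3), (4) and (5) it follows the paper's argument. In (4) you work with the graded ideals \(\phi(M)\) for homogeneous \(\phi\), where the paper uses a finite relation \(\sum_i\alpha_i(x_i)=1\) with homogeneous terms. The key point is the same in both: every proper graded ideal lies in \(\mathfrak m_R\), so some homogeneous map must hit a unit of \(R_0\). You also write out the splitting \(r\mapsto ru^{-1}m\), which the paper leaves implicit.

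The one genuinely different step is (2).
\begin{itemize}
\item The paper presents \(\wedge^{k+1}M\) as a quotient of \(\wedge^kM\otimes_RM\), applies (1), and then quotes \cite[Proposition 1.3]{herzog2019trace} for \(\tr_R(\wedge^kM\otimes_RM)\subseteq\tr_R(\wedge^kM)\cap\tr_R(M)\).
\item You instead contract directly with \(\phi'(x)=\phi(x\wedge m_{k+1})\). This is exactly the device the paper itself uses later in \autoref{prop:Yeahhhh}~(1).
\end{itemize}
Your version is self-contained and makes it evident that no finiteness or Noetherian hypothesis is involved, which suits the paper's deliberately non-Noetherian setup. The paper's version is shorter, given the citation.

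Your second argument for (5) is also valid but not needed. It uses that \(R_0\), the ring of degree-preserving endomorphisms of \(R(a)\), is local.

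One cosmetic slip: with \(\phi(M_i)\subseteq R_{i+\deg\phi}\) and \(m\) of degree \(-\deg\phi\), the summand \(Rm\) is \(R(\deg\phi)\), not \(R(-\deg\phi)\). This does not affect the conclusion.
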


\begin{proof}
(1) is clear by definition.

(2):
Note that $\wedge^{k+1} M \cong (\wedge^{k} M) \wedge M \cong \left((\wedge^{k} M) \otimes_R M\right)/L$ for some $R$-module $L\subseteq (\wedge^{k} M) \otimes_R M$.
Thus,
we have
$\tr_R(\wedge^{k+1} M) \subseteq \tr_R\left(\wedge^{k} M \otimes_R M\right)$ by (1).
Then
we obtain
$\tr_R\left(\wedge^{k} M \otimes_R M\right)
\subseteq
\tr_R(\wedge^k M) \cap \tr(M)
\subseteq
\tr_R(\wedge^k M)$
by \cite[Proposition 1.3]{herzog2019trace}.

(4): If \(M\) has an \(R\)-free summand, then \(\tr_R(M) = R\) by taking a retraction of an \(R\)-free summand of \(M\).
Conversely, we can follow the proof of \cite[Proposition 2.8(3)]{lindo2017trace} as follows: Assume there exists finitely many graded \(R\)-module homomorphisms \(\alpha_i \colon M \to R\) and homogeneous elements \(x_i\) of \(M\) indexed by a finite set \(I\) with an equality \(\sum_{i \in I} \alpha_i(x_i) = 1\) in \(R\).
Since \(R\) has only one graded maximal ideal \(\mfrakm_{R_0} + R_+\), there exists \(j \in I\) such that \(\alpha_j(x_j) \notin \mfrakm_{R_0} + R_+\).
This element becomes a unit of \(R\) and then \(\alpha_j \colon M \to R\) is surjective. This shows \(\tr_R(M) = R\).

(5):
If $\tr_R(M)=R$ or $\tr_R(N)=R$, then
$\tr_R(M\oplus N)=\tr_R(M)+\tr_R(N)=R$.
Assume $\tr_R(M)\neq R$ and $\tr_R(N)\neq R$.
Then $\tr_R(M)$ and $\tr_R(N)$ are proper graded ideals of $R$.
Since $R$ is local$^{*}$, every proper graded ideal is contained in $\mm_R$.
Hence
$\tr_R(M)+\tr_R(N)\subseteq \mm_R$,
and in particular $\tr_R(M\oplus N)\neq R$.
\end{proof}

\begin{lemma} \label{MinimalKill}
Let \(R\) be a Noetherian ring and let \(M\) be a finite \(R\)-module.
If \(M_{\mfrakp}=0\) for all \(\pp \in \Ass(R)\), then \(\tr_R(M) = 0\) holds.
\end{lemma}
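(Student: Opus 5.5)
To prove \autoref{MinimalKill}, I would show that every homomorphism $\phi \colon M \to R$ vanishes. Since $\tr_R(M)$ is the sum of the images $\phi(M)$, it suffices to show that $\phi(x)=0$ for every $x \in M$.

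Fix $\phi \in \Hom_R(M,R)$ and $x \in M$, and set $y \defeq \phi(x) \in R$. Suppose, for contradiction, that $y \neq 0$. Because $R$ is Noetherian, the nonzero cyclic submodule $Ry \subseteq R$ has an associated prime $\pp$. Every associated prime of a submodule of $R$ is an associated prime of $R$, so $\pp \in \Ass(R)$. Concretely, there is some $a \in R$ with $ay \neq 0$ and $\ann(ay) = \pp$.

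Now localize at $\pp$. The element $ay/1 \in R_\pp$ is nonzero, since every $s \notin \pp$ satisfies $s \cdot ay \neq 0$. On the other hand, by hypothesis $M_\pp = 0$, so $ax/1 = 0$ in $M_\pp$. The localized map $\phi_\pp \colon M_\pp \to R_\pp$ sends $ax/1$ to $ay/1$. Hence $ay/1 = \phi_\pp(0) = 0$, a contradiction. Therefore $y = 0$. Since $x$ and $\phi$ were arbitrary, every $\phi$ is zero and $\tr_R(M) = 0$.

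Equivalently, one can phrase this as follows. The map $R \to \prod_{\pp \in \Ass(R)} R_\pp$ is injective, and the composite $M \to R \to \prod_{\pp \in \Ass(R)} R_\pp$ factors through $\prod_{\pp} M_\pp = 0$, so it is zero; injectivity then forces $\phi = 0$.

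There is no real obstacle. The only step needing care is the injectivity of $R \to \prod_{\pp \in \Ass(R)} R_\pp$, and this is exactly where the Noetherian hypothesis enters: it guarantees that every nonzero element of $R$ has an annihilator contained in some associated prime.
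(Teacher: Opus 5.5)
Your proof is correct, but it follows a different route from the paper's. The paper argues with the module as a whole. Since $M$ is finite and $M_{\mathfrak p}=0$, some $r_{\mathfrak p}\notin\mathfrak p$ annihilates $M$, so $\operatorname{Ann}_R(M)\not\subseteq\mathfrak p$ for each $\mathfrak p\in\Ass(R)$. Prime avoidance over the finitely many associated primes then gives a single non-zero-divisor $r$ with $rM=0$, and $r\phi(m)=\phi(rm)=0$ forces $\phi=0$.

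You instead argue elementwise on the image of $\phi$, using the injectivity of $R\to\prod_{\mathfrak p\in\Ass(R)}R_{\mathfrak p}$. Your route needs neither the finite generation of $M$ nor prime avoidance. It only uses that $\operatorname{Ann}_R(y)$ lies in some associated prime for each $0\neq y\in R$, so it proves the lemma for arbitrary $R$-modules $M$. A small simplification: once you have $\mathfrak p$, you can localize $y$ itself rather than $ay$, since $\operatorname{Ann}_R(y)\subseteq\operatorname{Ann}_R(ay)=\mathfrak p$ already gives $y/1\neq 0$ in $R_{\mathfrak p}$.

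In exchange, the paper's route yields the stronger intermediate fact that $M$ is killed by a non-zero-divisor, i.e.\ $M$ is torsion, which matches the torsion viewpoint of \autoref{TraceTorsionFreeQuotient}. In the application \autoref{prop:3.8} the module $\Omega^i_{R/\kk}$ is finitely generated, so either argument suffices there.
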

\begin{proof}
    Let \(\mfrakp\) be an associated prime ideal of \(R\). Then there exists an element \(r_{\mfrakp} \in R \setminus \mfrakp\) such that \(r_{\mfrakp} \cdot M = 0\).
    Therefore \(\Ann_{R}(M) \nsubseteq \mfrakp\) holds and the prime avoidance shows that
    \begin{equation*}
        \Ann_R(M) \nsubseteq \bigcup_{\mfrakp \in \Ass(R)} \mfrakp.
    \end{equation*}
    The right hand side is the set of non zero divisor of \(R\) and then a non zero divisor \(r\) of \(R\) kills \(M\).
    This shows \(\tr_R(M) = 0\).
\end{proof}

Finally, we prove that, for suitable graded rings, the trace ideals of the exterior powers of the module of differentials vanish for all degrees greater than \(\dim(R)\).

\begin{proposition}\label{prop:3.8}
Assume that $R$ is a reduced Noetherian graded ring and that $R_0=\kk$ is a perfect field.
Then we have \(\tr_R(\Omega_{R/\kk}^i) = 0\) for all \(i > \dim(R)\).
\end{proposition}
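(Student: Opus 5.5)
We will deduce the statement from \autoref{MinimalKill}. The module $\Omega^i_{R/\kk}$ is finite over $R$, since $R$ is a Noetherian graded ring with $R_0=\kk$, hence a finitely generated $\kk$-algebra, and $\Omega^1_{R/\kk}$ is generated by the differentials of finitely many algebra generators. So it suffices to show that $(\Omega^i_{R/\kk})_{\mfrakp}=0$ for every $\mfrakp\in\Ass(R)$ and every $i>\dim(R)$.

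Since $R$ is reduced, $\Ass(R)=\Min(R)$. Fix a minimal prime $\mfrakp$. The localization $R_{\mfrakp}$ is a reduced zero-dimensional local ring, hence a field; it equals the residue field $K\defeq\kappa(\mfrakp)=\operatorname{Frac}(R/\mfrakp)$. Localization commutes with forming differentials and exterior powers, so
\[
(\Omega^i_{R/\kk})_{\mfrakp}\cong \Omega^i_{R_{\mfrakp}/\kk}=\textstyle\bigwedge^i_K \Omega^1_{K/\kk}.
\]
Now $K$ is a finitely generated field extension of $\kk$ with $\operatorname{trdeg}_{\kk}K=\dim(R/\mfrakp)\le\dim(R)$. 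Because $\kk$ is perfect, $K/\kk$ is separably generated. Hence $\dim_K\Omega^1_{K/\kk}=\operatorname{trdeg}_{\kk}K$, by the standard result that a separating transcendence basis $t_1,\dots,t_d$ gives a basis $dt_1,\dots,dt_d$ of $\Omega^1_{K/\kk}$. Therefore $\bigwedge^i_K\Omega^1_{K/\kk}=0$ for $i>\operatorname{trdeg}_{\kk}K$, and in particular for all $i>\dim(R)$.

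Combining these facts, $(\Omega^i_{R/\kk})_{\mfrakp}=0$ for all $\mfrakp\in\Ass(R)$, and \autoref{MinimalKill} gives $\tr_R(\Omega^i_{R/\kk})=0$. The only substantive input is the identification $\dim_K\Omega^1_{K/\kk}=\operatorname{trdeg}_{\kk}K$, which uses perfectness of $\kk$; without it, $\Omega^1_{K/\kk}$ could be larger, for example for inseparable extensions in positive characteristic. The remaining points, reducedness giving $\Ass=\Min$ and the equality $\operatorname{trdeg}_{\kk}K=\dim R/\mfrakp$ for affine domains, are routine.
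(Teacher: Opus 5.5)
Your proof is correct and follows essentially the same route as the paper: you localize at the minimal (equivalently, associated) primes, identify $R_{\mathfrak p}$ with $\operatorname{Frac}(R/\mathfrak p)$, and use $\dim_K\Omega^1_{K/\kk}=\operatorname{trdeg}_{\kk}K=\dim R/\mathfrak p\le\dim R$ over a perfect field before concluding with \autoref{MinimalKill}. The only difference is cosmetic. You apply \autoref{MinimalKill} directly to $\Omega^i_{R/\kk}$ and explicitly verify the finiteness it requires, whereas the paper makes an unnecessary detour through localizing the trace ideal via \autoref{rem1}(3).
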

\begin{proof}
    Take any
$\mfrakp \in \Ass(R)$.
    Note that $\mfrakp$ is a minimal prime ideal and $R_{\mfrakp} \cong \Frac(R/\mfrakp)$ because $R$ is reduced.
    Then we have
    \begin{equation*}
        (\Omega_{R/\kk}^1)_{\mfrakp} \cong \Omega_{R_{\mfrakp}/\kk}^1 \cong \Omega_{\Frac(R/\mfrakp)/\kk}^1 \cong \bigoplus_{i=1}^{\dim (R/\mfrakp)} \Frac(R/\mfrakp),
    \end{equation*}
where the last isomorphism follows from the equality
$\trdeg_{\kk} \Frac(R/\mfrakp) = \dim R/\mfrakp$  since $R/\mfrakp$ is a finitely generated affine domain over $\kk$,
    together with the description of the module of differentials of finitely generated field extensions over a perfect field.
    This shows that \((\Omega^i_{R/\kk})_{\mfrakp}=0\) for any \(i > \dim (R/\mfrakp)\).
    Because of \(\dim(R) \geq \dim(R/\mfrakp)\), we can conclude that \((\Omega^i_{R/\kk})_{\mfrakp} = 0\) for any \(i > \dim(R)\).
    By \autoref{rem1}~(3), the above argument shows \((\tr_R(\Omega^i_{R/\kk}))_{\mfrakp} = 0\) for any \(i > \dim(R)\).
    By \autoref{MinimalKill}, \(\tr_R(\Omega^i_{R/\kk})=0\) for any \(i > \dim(R)\).
\end{proof}

\begin{remark}
Even if $R$ is a domain, $\Omega_{R/\kk}^i$ does not necessarily vanish when $i > \dim(R)$.
Assume that $\kk$ is an algebraically closed field of characteristic $0$.
For each integer $a \ge 1$, consider
\[
R_a
:=
\kk[x_1,y_1,z_1,\dots,x_a,y_a,z_a]/
(x_1y_1-z_1^2,\dots,x_ay_a-z_a^2)
\cong \bigotimes_{i=1}^a\kk[x_i,y_i,z_i]/(x_iy_i-z_i^2),
\]
where $\deg(x_i)=\deg(y_i)=\deg(z_i)=1$ for each $1\le i\le a$.
Then $\Omega^{p}_{R_a/\kk} \neq 0$
for all $\dim R_a+1 \le p \le 3a$.
Indeed, since $R_a$ is a graded normal domain with the graded maximal ideal $\mm_a$,
by \cite[Remark 3.2]{miller2019co},
one has
$\tor \left(\Omega^{p}_{(R_a)_{\mathfrak m_a}/\kk} \right)\neq 0$
for all $\dim (R_a)_{\mm_a}+1 \le p \le 3a$,
where $\operatorname{tor}(\Omega^{p}_{(R_a)_{\mathfrak m_a}/\kk})$ is the torsion submodule of $\Omega^{p}_{(R_a)_{\mathfrak m_a}/\kk}$.
In particular,
we have
$\Omega^{p}_{R_a/\kk} \neq 0$
for all $\dim R_a+1 \le p \le 3a$.
\end{remark}

\section{Trace ideals of exterior powers of differentials over graded rings}\label{sect3}

In this section, we study the relationship between the trace ideals of exterior powers of the module of differentials and some invariants which mesures how close the rings are to regular (see \autoref{thm:EquivTrRank}).
In particular, under reduced assumption, we prove that the equality of the top differential trace with the whole ring characterizes the regularity of the ring (see \autoref{t1hm:EquivRegular1}).

\begin{setup} \label{SetupSection3}
    Let \(S\) be an algebra over a ring \(S_0\). Furthermore, we assume one of the following conditions:
    \begin{enumerate}
        \item \(S\) is a \(\setZ_{\geq 0}\)-graded ring whose degree \(0\)-part is \(S_0\). In this case, we will write \(S = \bigoplus_{i \geq 0} S_i\). Note that since \(S\) is $\ZZ_{\ge 0}$-graded, \(S_0^{\times} = S^{\times}\) automatically holds;
        \item \(S\) is a ring with the unique maximal ideal \(\mfrakm_S\) and a subring \(S_0\) such that \(S_0^{\times} = S^{\times}\) and \(\mfrakm_S\) is finitely generated.
    \end{enumerate}
    We often say the former case is that \(S\) is graded, and the latter case is that \(S\) is local, for simplicity.
    If \(S\) is local, we will write its completion as \(\widehat{S} \defeq \lim_{n \geq 1} S/\mfrakm_S^nS\) and we say \(S\) is complete if the canonical morphism \(S \to \widehat{S}\) is an isomorphism.
\end{setup}

\begin{definition} \label{DefCompletedModDiff}
    When \(S\) is local, we define the \emph{completed modules of differentials} as the \(\mfrakm_S\)-adic completion
    \begin{equation*}
        \widehat{\Omega}^1_{S/S_0} \defeq \lim_{n \geq 1} (\Omega^1_{S/S_0}/\mfrakm_S^n \Omega^1_{S/S_0}) \cong \lim_{n \geq 1} (\Omega^1_{S/S_0} \otimes_{S} S/\mfrakm_S^n)
    \end{equation*}
    of the usual module of differentials \(\Omega^1_{S/S_0}\).
    This is an \(\widehat{S}\)-module.
\end{definition}

We begin by recalling the definition of polynomial rank.

\begin{definition}[\cite{costa1979Polynomial}] \label{DefPolyRank}
Assume that \(S\) is graded.
For \(n\ge0\), a \emph{graded polynomial extension of \(n\) variables} of \(S\) is a polynomial ring
$S[x_1,\dots,x_n]$
equipped with a grading compatible with that of \(S\) such that each \(x_i\) is homogeneous with \(\deg(x_i) > 0\).
The \emph{polynomial rank} of \(S\) is the supremum of the set of integers \(n\ge0\) such that \(S\) is isomorphic, as a graded ring, to a graded polynomial extension \(T[x_1,\dots,x_n]\) of some graded subring \(T\).
Following \cite{costa1979Polynomial}, we denote the polynomial rank of \(S\) by $\prk(S)$.
\end{definition}

Since we do not assume Noetherian condition in general, we need the following notion.

\begin{definition}[\cite{costa1979Polynomial}] \label{DefcoPolyRank}
Assume that \(S\) is graded.
The \emph{copolynomial rank} of \(S\) is the infimum of the Krull dimensions of graded rings \(T\) for which there exists an isomorphism
$S \cong T[x_1,\dots,x_n]$
of graded rings for some \(n\ge0\), where \(T[x_1, \dots, x_n]\) is a graded polynomial extension of \(T\).
We denote the copolynomial rank of \(S\) by \(\cprk(S)\).
\end{definition}

\begin{remark}\label{rem:prank}
By the definition, we know the following for the graded ring \(S\).
\begin{enumerate}[\rm (1)]
\item We have \(\prk(S)\le \dim(S)\).
    \item If \(S\) is Noetherian and \(\dim(S)<\infty\), then
$\cprk(S)=\dim(S)-\prk(S)$.
\end{enumerate}
\end{remark}

\begin{definition}
    Let \(S\) be a complete local \(S_0\)-algebra.
    The \emph{formal power series rank} of \(S\) is the supremum of the set of integers \(n \geq 0\) such that \(S\) is isomorphic to a formal power series ring \(T[|x_1, \dots, x_n|]\) of some sub \(S_0\)-algebra \(T\). We denote the formal power series rank of \(S\) by \(\fprk(S)\).

    The \emph{co-formal power series rank} of \(S\) is the infimum of the Krull dimensions of sub \(S_0\)-algebras \(T\) for which there exists an isomorphism \(S \cong T[|x_1, \dots, x_n|]\) of \(S_0\)-algebras for some \(n \geq 0\). We denote the co-formal power series rank of \(S\) by \(\cfprk(S)\).
\end{definition}

\begin{remark}\label{rem:fprank}
By the definition, we know the following for complete local \(S_0\)-algebra \(S\):
\begin{enumerate}[\rm (1)]
\item We have \(\fprk(S)\le \dim(S)\).
    \item If \(S\) is Noetherian and \(\dim(S)<\infty\), then
$\cprk(S)=\dim(S)-\prk(S)$.
\end{enumerate}
\end{remark}

\begin{lemma} \label{CompatibleCompletion}
    Assume that \(S\) is local and complete.
    Let \(M\) be an \(S\)-module and write \(\widehat{M} \defeq \lim_{n \geq 1} M/\mfrakm_S^nM\).
    Then the trace ideals \(\tr_S(M)\) and \(\tr_S(\widehat{M})\) are the same in \(S\).
    In particular, \(\tr_S(\Omega_{S/S_0}) = S\) if and only if \(\tr_{S}(\widehat{\Omega}_{S/S_0}) = S\).
\end{lemma}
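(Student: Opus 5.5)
Both inclusions come from comparing $\Hom_S(M,S)$ with $\Hom_S(\widehat{M},S)$ through the completion map $\iota\colon M\to\widehat{M}$, $x\mapsto (x \bmod \mfrakm_S^n M)_n$. The key input is that $S$ is complete, hence $\mfrakm_S$-adically separated, and $S\cong\lim_n S/\mfrakm_S^n$. Every $S$-linear map is automatically continuous for the $\mfrakm_S$-adic topologies, since it sends $\mfrakm_S^n M$ into $\mfrakm_S^n$.

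For $\tr_S(\widehat{M})\subseteq\tr_S(M)$, I would start from $\psi\in\Hom_S(\widehat{M},S)$ and $\xi\in\widehat{M}$ and aim to show $\psi(\xi)\in\tr_S(M)$. The natural choice of homomorphism on $M$ is $\phi=\psi\circ\iota\in\Hom_S(M,S)$. The difficulty is that $\xi$ need not lie in $\iota(M)$. For each $n$ choose $x_n\in M$ with $\xi-\iota(x_n)\in\ker(\widehat{M}\to M/\mfrakm_S^nM)$. If $\mfrakm_S$ is finitely generated, this kernel is $\mfrakm_S^n\widehat{M}$ (a Stacks Project fact on completions with respect to finitely generated ideals), so
\[
\psi(\xi)-\phi(x_n)\in \mfrakm_S^n .
\]
Hence $\psi(\xi)\in \bigcap_n\bigl(\tr_S(M)+\mfrakm_S^n\bigr)$, the closure of $\tr_S(M)$. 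The main obstacle is to show that $\tr_S(M)$ is closed. In the intended application $S$ is Noetherian and complete, so every ideal is closed by Krull's intersection theorem applied to $S/\tr_S(M)$, and this settles the inclusion. Without Noetherianity I would try to avoid closures altogether: write $\xi=\sum_j a_j\iota(y_j)$ with finitely many $a_j\in S$, $y_j\in M$. This is possible for finitely generated $M$, where $\widehat{M}=M\otimes_S\widehat S=\iota(M)$. In general I would use the surjectivity statement for the case actually needed below.

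For $\tr_S(M)\subseteq\tr_S(\widehat{M})$, take $\phi\in\Hom_S(M,S)$. Since $\phi(\mfrakm_S^nM)\subseteq\mfrakm_S^n$, it induces compatible maps $M/\mfrakm_S^nM\to S/\mfrakm_S^n$. Passing to the limit gives $\widehat{\phi}\colon\widehat{M}\to\widehat{S}=S$ with $\widehat\phi\circ\iota=\phi$. Therefore $\phi(x)=\widehat\phi(\iota(x))\in\tr_S(\widehat M)$. This direction is formal.

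For the \enquote{in particular} statement, I would not rely on the closedness subtlety but use \autoref{rem1}~(4) in the local case, which says trace equal to $S$ is equivalent to having a surjection onto $S$. If $\phi\colon\Omega_{S/S_0}\to S$ is surjective, then $\widehat\phi$ is surjective by the previous paragraph. Conversely, if $\psi\colon\widehat\Omega_{S/S_0}\to S$ is surjective, pick $\xi$ with $\psi(\xi)=1$ and $x_1\in\Omega_{S/S_0}$ with $\xi-\iota(x_1)\in\ker(\widehat\Omega\to\Omega/\mfrakm_S\Omega)$. It then suffices to check that $\psi(\xi)-\psi(\iota(x_1))\in\mfrakm_S$, using only that this kernel maps into $\overline{\mfrakm_S}=\mfrakm_S$ (maximal ideals are closed). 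Then $\phi(x_1)=\psi\iota(x_1)$ is a unit, so $\phi$ is surjective, and the two unit-trace conditions coincide.
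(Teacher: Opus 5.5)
Your proof is correct. For $\tr_S(M)\subseteq\tr_S(\widehat{M})$ it is exactly the paper's argument: extend $\phi$ to $\widehat{\phi}$ with $\widehat{\phi}\circ\iota=\phi$.

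For the reverse inclusion you take a different and more substantive route. The paper disposes of $\tr_S(\widehat{M})\subseteq\tr_S(M)$ in one line, ``since the canonical morphism $M\to\widehat{M}$ exists''. But precomposing $\psi\in\Hom_S(\widehat{M},S)$ with $\iota$ only gives $\psi(\iota(M))\subseteq\tr_S(M)$. And $\iota$ need not be surjective when $M$ is not finitely generated: for $M$ free of countably infinite rank, $\widehat{M}$ contains convergent infinite sums that are not in $\iota(M)$. Your approximation step uses $\ker(\widehat{M}\to M/\mfrakm_S^nM)=\mfrakm_S^n\widehat{M}$ to get $\psi(\xi)\in\bigcap_n\bigl(\tr_S(M)+\mfrakm_S^n\bigr)$, and then closes up. This is exactly what makes the inclusion hold for arbitrary $M$. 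The paper's one-liner is literally valid only when $\iota$ is onto, for instance when $M$ is finitely generated.

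One point to tighten: your hedging about non-Noetherian $S$ is unnecessary. In the setting of the lemma, $\mfrakm_S$ is finitely generated, $S/\mfrakm_S$ is a field, and $S$ is $\mfrakm_S$-adically complete. Hence $S$ is automatically Noetherian: see the Stacks Project, Tag 05GH, or argue via the associated graded ring, which is a quotient of a polynomial ring in finitely many variables over $S/\mfrakm_S$ (Atiyah--Macdonald, Proposition 10.25).

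Consequently, Krull's intersection theorem applies throughout, and every ideal of $S$, in particular $\tr_S(M)$, is $\mfrakm_S$-adically closed. Your main argument therefore proves the full lemma. The fallback for finitely generated $M$ and the separate treatment of the unit-trace case are not needed. The latter is just the case $n=1$, where $\psi(\mfrakm_S\widehat{M})\subseteq\mfrakm_S$ directly, with no closure involved. Also, in the finitely generated case you only need surjectivity of $M\otimes_S\widehat{S}\to\widehat{M}$, not an isomorphism.
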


\begin{proof}
    Since any \(S\)-linear morphism \(M \to S\) can be extended to \(\widehat{M} \to \widehat{S} \cong S\), the containment \(\tr_S(\widehat{M}) \supseteq \tr_S(M)\) holds. The converse is trivial since the canonical morphism \(M \to \widehat{M}\) exists.
\end{proof}

\begin{lemma} \label{CompletionOmega}
    Assume that \(S\) is local.
    Then we have the canonical isomorphism
    \begin{equation*}
        \Omega^1_{S/S_0} \otimes_S S/\mfrakm_S^n \xrightarrow{\cong} \Omega^1_{\widehat{S}/S_0} \otimes_S S/\mfrakm_S^n
    \end{equation*}
    of \(S\)-modules.
    In particular, we have an isomorphism
    \begin{equation*}
        \widehat{\Omega}^1_{S/S_0} \xrightarrow{\cong} \widehat{\Omega}^1_{\widehat{S}/S_0}
    \end{equation*}
    of completed modules of differentials.
\end{lemma}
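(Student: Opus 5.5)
We need to prove that the natural map $\Omega^1_{S/S_0}\otimes_S S/\mfrakm_S^n \to \Omega^1_{\widehat{S}/S_0}\otimes_S S/\mfrakm_S^n$ is an isomorphism. The plan is to identify both sides with the module of differentials of the same ring $S/\mfrakm_S^n$, up to the same correction term coming from the ideal. The key input is that, because $\mfrakm_S$ is finitely generated, the canonical map $S/\mfrakm_S^n \to \widehat{S}/\mfrakm_S^n\widehat{S}$ is an isomorphism. For a finitely generated ideal $I$ one has $\widehat{S}/I^n\widehat{S}\cong S/I^n$ (the standard fact for completions along finitely generated ideals, e.g.\ Stacks Project, Tag 05GG). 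Write $B \defeq S/\mfrakm_S^n$ and $J \defeq \mfrakm_S^n$.

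\textbf{Step 1 (conormal sequence over $S$).} Apply the second fundamental exact sequence to $S \to B = S/J$:
\begin{equation*}
J/J^2 \xrightarrow{\ d\ } \Omega^1_{S/S_0}\otimes_S B \longrightarrow \Omega^1_{B/S_0} \longrightarrow 0 .
\end{equation*}
Since $J=\mfrakm_S^n$, the image of $d$ is generated by the classes $d(ab)$ with $a\in\mfrakm_S$ and $b\in\mfrakm_S^{n-1}$. Each such class equals $a\,db+b\,da$, which lies in $\mfrakm_S\,(\Omega^1_{S/S_0}\otimes_S B)$. This alone does not make the image vanish, so I will not try to show that $d$ is zero.

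\textbf{Step 2 (conormal sequence over $\widehat S$).} Write the same sequence for $\widehat{S}\to \widehat{S}/J\widehat{S}\cong B$:
\begin{equation*}
J\widehat S/J^2\widehat S \xrightarrow{\ d\ } \Omega^1_{\widehat S/S_0}\otimes_{\widehat S} B \longrightarrow \Omega^1_{B/S_0} \longrightarrow 0 .
\end{equation*}
Note that $\Omega^1_{\widehat S/S_0}\otimes_{\widehat S}B = \Omega^1_{\widehat S/S_0}\otimes_S B$.

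\textbf{Step 3 (compare the two sequences).} The map $S\to\widehat S$ gives a morphism from the sequence of Step 1 to that of Step 2. It is the identity on $\Omega^1_{B/S_0}$. On the conormal terms it is an isomorphism, because $J/J^2 \to J\widehat S/J^2\widehat S$ is obtained from $S/J^2\cong \widehat S/J^2\widehat S$ by restricting to the kernels of the maps onto $B$. The five lemma then gives surjectivity of the middle map but not injectivity, since the left maps need not be injective. To get a genuine isomorphism I will argue with $B_2 \defeq S/\mfrakm_S^{2n}$ instead.

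\textbf{Step 4 (the cleaner route via $B_2$).} There is a natural identification
\begin{equation*}
\Omega^1_{S/S_0}\otimes_S B \;\cong\; \Omega^1_{B_2/S_0}\otimes_{B_2} B .
\end{equation*}
To see this, apply the conormal sequence to $S\to B_2$, which has kernel $J^2$. Tensoring with $B$, the image of $J^2/J^4$ consists of elements $d(xy)=x\,dy+y\,dx$ with $x,y\in J$. These lie in $J\cdot(\Omega^1_{S/S_0}\otimes B)=0$, so they vanish after tensoring with $B$. Thus $\Omega^1_{S/S_0}\otimes_S B$ depends only on $B_2$ and the structure map from $S_0$. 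The identical argument applies to $\widehat S$, since $J\widehat S$ is again the relevant ideal. Because $S/\mfrakm_S^{2n}\cong \widehat S/\mfrakm_S^{2n}\widehat S$ compatibly with the $S_0$-structure, the two sides agree and the canonical map is an isomorphism.

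\textbf{Step 5 (pass to the limit).} Taking $\lim_n$ of the isomorphisms from Step 4 gives the statement about completed differentials. Here one uses that $\widehat\Omega^1_{\widehat S/S_0}$ is computed by $\lim_n \Omega^1_{\widehat S/S_0}\otimes_{\widehat S}\widehat S/\mfrakm_S^n\widehat S$, which requires $\mfrakm_{\widehat S}=\mfrakm_S\widehat S$; this again follows from finite generation.

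The main obstacle is the completion input $\widehat S/\mfrakm_S^{k}\widehat S\cong S/\mfrakm_S^k$ in the possibly non-Noetherian setting. This is exactly where the finite generation of $\mfrakm_S$ assumed in Setup~\ref{SetupSection3} is needed, and I would cite the standard result for it.
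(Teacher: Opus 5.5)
Your argument is correct and is essentially the paper's proof. The paper cites Stacks Tag 02HQ to identify $\Omega^1_{S/S_0}\otimes_S S/\mfrakm_S^n$ with $\Omega^1_{(S/\mfrakm_S^{n+1})/S_0}\otimes S/\mfrakm_S^n$ (and likewise for $\widehat{S}$), then uses $S/\mfrakm_S^{k}\cong \widehat{S}/\mfrakm_S^{k}\widehat{S}$ (Tag 05GG), whereas your Step 4 proves the same kind of identification directly from the conormal sequence with $S/\mfrakm_S^{2n}$ in place of $S/\mfrakm_S^{n+1}$; your Steps 1--3 are an abandoned detour and can simply be dropped.
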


\begin{proof}
    By \citeSta{02HQ}, we have canonical isomorphisms
    \begin{align*}
        \Omega^1_{S/S_0} \otimes_S S/\mfrakm_S^n & \xrightarrow{\cong} \Omega^1_{(S/\mfrakm_S^{n+1})/S_0} \otimes_{S/\mfrakm_S^{n+1}} S/\mfrakm_S^n \quad \text{and} \\
        \Omega^1_{\widehat{S}/S_0} \otimes_{\widehat{S}} \widehat{S}/\mfrakm_S^n\widehat{S} & \xrightarrow{\cong} \Omega^1_{(\widehat{S}/\mfrakm_S^{n+1}\widehat{S})/S_0} \otimes_{\widehat{S}/\mfrakm_S^{n+1}\widehat{S}} \widehat{S}/\mfrakm_S^n\widehat{S}
    \end{align*}
    of \(S\)-modules.
    Since \(\widehat{S}/\mfrakm_S^n\widehat{S}\) is isomorphic to \(S/\mfrakm_S^n\) (see, for example, \citeSta{05GG}), the right-hand sides of the isomorphisms are canonically isomorphic each other. This completes the proof.
\end{proof}

\begin{proposition} \label{prop:Yeahhhh}
    Keep the setting of \autoref{SetupSection3}. Then the following hold:
    \begin{enumerate}[\rm (1)]
        \item If $\tr_S(\Omega^{i}_{S/S_0}) = S$ for some \(i \geq 0\), then $\tr_S(\Omega^{j}_{S/S_0}) = S$ for any $0 \le j \le i$;
        \item The equality \(\tr_S(\Omega_{S/S_0}) = S\) holds if and only if there exists a derivation \(D \colon S \to S\) over \(S_0\) an an element \(t \in S\) with a slice \(t \in S\), namely, \(D(t) = 1\). Moreover, we can take \(D\) and \(t\) satisfying the following conditions;
        \begin{enumerate}[\rm(a)]
            \item if \(S\) is graded, then \(t \in S_+\) and \(D\) is a locally nilpotent homogeneous derivation \(D \colon S \to S(-a)\) with \(a \in \setZ_{> 0}\);
            \item if \(S\) is local and complete, then \(t\) belongs to the maximal ideal of \(S\).
        \end{enumerate}
        \item If we assume \(S \supseteq \setQ\), then the equality \(\tr_S(\Omega_{S/S_0}) = S\) holds if and only if \(\prk(S) \geq 1\) when \(S\) is graded, or \(\fprk(S) \geq 1\) when \(S\) is local and complete.
    \end{enumerate}
\end{proposition}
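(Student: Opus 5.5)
Part (1) follows from \autoref{rem1}~(2), which gives the chain $\tr_S(\Omega^{j+1}_{S/S_0}) \subseteq \tr_S(\Omega^{j}_{S/S_0})$ for $j>0$; for $j=0$ we have $\Omega^0=S$, whose trace is $S$. Although \autoref{rem1} is stated for graded modules, the same argument via \cite[Proposition 1.3]{herzog2019trace} works verbatim in the local case. So if the $i$-th trace is $S$, every smaller one is too.

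For (2), I use $\Hom_S(\Omega^1_{S/S_0},S)\cong \mathrm{Der}_{S_0}(S,S)$, $\phi\mapsto \phi\circ d$. Since $\Omega^1$ is generated by the $dt$, the trace is the ideal generated by all $D(t)$. If some $D(t)=1$, the trace is $S$. Conversely, if the trace is $S$, then $\sum D_i(t_i)=1$, and since $S$ has a unique (graded) maximal ideal, some $D_i(t_i)$ is a unit $u$ (as in the proof of \autoref{rem1}~(4)). Replacing $D_i$ by $u^{-1}D_i$ gives a slice.

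For (a), I pass to the internal Hom via \autoref{rem:interestingfiniteness}. Choose a homogeneous $D$ of some degree and a homogeneous $t$ with $D(t)$ of degree $0$ and not in $\mfrakm_0$, hence a unit in $S_0$. Normalizing, we get $D(t)=1$. We may assume $t\in S_+$: if $t\in S_0$ then $D$ lowers degree by $0$ and we may reduce by subtracting $t$'s degree-$0$ part. More carefully, I will show that a degree-$0$ derivation kills $S_0$ over $S_0$, so $\deg t=a>0$ and $D$ has degree $-a$. Local nilpotence holds because $D$ strictly lowers degree and the grading is nonnegative, so $D^{n+1}$ kills $S_n$.

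For (b), if $t\notin\mfrakm_S$, then $t=c+t'$ with $c$ a unit. Since $S_0^\times=S^\times$, we have $c\in S_0$ and $D(c)=0$, so we may replace $t$ by $t-c$ (taking the representative with $D(t-c)=1$). The unit condition ensures that $t$ modulo $\mfrakm_S$ is represented by an element of $S_0$.

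Part (3) is the main step. If $S\cong T[x]$ or $T[\![x]\!]$, then $\partial/\partial x$ and $t=x$ give a slice, so the trace is $S$ by (2). Conversely, given $D,t$ as in (2), with $\mathbb{Q}\subseteq S$, I set $T=\ker D$ and use the standard slice theorem for locally nilpotent derivations. The map $\pi(s)=\sum_{n\ge0}\frac{(-t)^n}{n!}D^n(s)$ is a ring homomorphism $S\to T$, and $S=T[t]$ with $t$ algebraically independent over $T$. In the graded case, $D$ is locally nilpotent and $t$ is homogeneous of positive degree, so the result is a graded polynomial extension, giving $\prk\ge1$. In the complete local case, $D$ need not be locally nilpotent, but since $t\in\mfrakm_S$ and $S$ is complete, the series defining $\pi$ converges $\mfrakm_S$-adically. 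I expect this step to be the obstacle. One must check that $\pi$ is a continuous ring map onto $T=\ker D$ (that $D\pi=0$ follows by telescoping), that $T$ is complete local, and that every $s$ equals $\sum_n \pi(D^n s)\,t^n/n!$ convergently. One must also check that $T[\![x]\!]\to S$, $x\mapsto t$, is injective: apply $D^n$ and $\pi$ to a vanishing series to show each coefficient is $0$. This gives $\fprk(S)\ge1$.
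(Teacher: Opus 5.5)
Your proposal is correct. For (2), (a) and (b) it is essentially the paper's argument; the genuine differences are in (1) and (3).

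For (1), you deduce the claim from \autoref{rem1}~(2), that is, from $\tr(M\otimes N)\subseteq\tr(M)\cap\tr(N)$ and the surjection $\wedge^{k}M\otimes M\twoheadrightarrow\wedge^{k+1}M$, which works over any ring. The paper instead normalizes so that $\pi(df_{1}\wedge\cdots\wedge df_{i})=1$ and writes down the explicit surjection $\eta\mapsto\pi(\eta\wedge df_{i})$ on $\Omega^{i-1}_{S/S_0}$. Your route is shorter and yields the whole chain of inclusions of trace ideals; the paper's gives a concrete surjection onto $S$.

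For the easy direction of (3), you use $\partial/\partial x$ and the universal property of $\Omega_{S/S_0}$. The paper instead splits off $S\,dx$ from the (completed) module of differentials and goes through \autoref{CompletionOmega} and \autoref{CompatibleCompletion}. Yours is more elementary, since a derivation of $T[\![x]\!]$ factors through the uncompleted $\Omega_{S/S_0}$ anyway.

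For the hard direction, the paper simply cites Wright's graded slice theorem and Zariski's Lemma~4, while you reprove both through the Dixmier map. That is fine, but your checklist leaves one fact implicit. Every $S_0$-derivation satisfies $D(\mfrakm_S^n)\subseteq\mfrakm_S^{n-1}$ by the Leibniz rule, so $D$ is $\mfrakm_S$-adically continuous. Using $t\in\mfrakm_S$, this also gives $\pi(\mfrakm_S^n)\subseteq\mfrakm_S^n$. Together with separatedness of $S$, this justifies applying $D$ and $\pi$ termwise in three places:
the telescoping $D\pi=0$; the identity $s=\sum_n\pi(D^ns)t^n/n!$; and the injectivity of $T[\![x]\!]\to S$.
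It also gives $\mfrakm_T^n=\mfrakm_S^n\cap T$, so $T=\ker D$ is closed in $S$ and hence complete local.

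Two small clean-ups. In (b), no replacement of $t$ is needed: any slice automatically lies in $\mfrakm_S$, since otherwise $t\in S^\times=S_0^\times$ and $D(t)=0$. In (a), the only point is $D(S_0)=0$ (equivalently $(\Omega_{S/S_0})_0=0$), so a homogeneous slice has positive degree; the remark about subtracting a degree-$0$ part is superfluous.
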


\begin{proof}
    (1): We only give the proof for graded case as the local case is just the same proof.
It suffices to prove the claim in the case \(i>0\) and \(j=i-1\).
By assumption, there exist a graded \(S\)-homomorphism \(\pi \colon \Omega^i_{S/S_0}\to S\) and a homogeneous element
\(\omega\in \Omega^i_{S/S_0}\) such that \(\pi(\omega)=1\) by \autoref{rem1} (4).
Since \(\Omega_{S/S_0}\) is generated by \(\setcond{d(f)}{f \in S}\),
we can write
$\omega=\sum_{\ell} a_{\ell}\, d(f_{\ell_1})\wedge\cdots\wedge d(f_{\ell_i})$, where \(a_\ell\) and \(f_{\ell_i}\) are homogeneous element of \(S\).
Then $\pi(\omega) = \sum_{\ell} a_{\ell}\,\pi\!\bigl(d(f_{\ell_1})\wedge\cdots\wedge d(f_{\ell_i})\bigr)=1$ in \(S\) holds. Since \(S_0\) is local, there exists some \(\ell\) such that \(\pi(d(f_{\ell_1}) \wedge \dots \wedge d(f_{\ell_i})) \in S^{\times} = S_0^\times\).
Multiplying this unit, we may assume that there exists some \(\ell\) such that
$\pi\!\bigl(d(f_{\ell_1})\wedge\cdots\wedge d(f_{\ell_i})\bigr)=1$.

Now define \(\pi' \colon \Omega^{i-1}_{S/S_0}\to S\) by
$\pi'(\eta)\defeq \pi\!\bigl(\eta\wedge d(f_{\ell_i})\bigr)$.
Then \(\pi'(d(f_{\ell_1}) \wedge \cdots \wedge d(f_{\ell_{i-1}}))=1\), hence \(\pi'\) is surjective. Therefore
\(\operatorname{tr}_S(\Omega^{i-1}_{S/S_0})=S\),
as desired.

(2):
Assume that there exists a derivation \(D \colon S \to S\) over \(S_0\) and a slice \(t \in S\).
By the universality of the differential module \(\Omega_{S/S_0}\), the derivation \(D\) factors through the universal derivation \(d \colon S \to \Omega_{S/S_0}\) and an \(S\)-linear homomoprhism \(\varphi \colon \Omega_{S/S_0} \to S\).
Since \(t \in S\) is a slice, we know \(\varphi(dt) = 1\) and this shows \(\tr_S(\Omega_{S/S_0}) = S\).

Assume that $\Omega_{S/S_0}$ has an $S$-free summand.
Choose a projection $\pi \colon \Omega_{S/S_0}\to S$ onto such a summand, and set an \(S_0\)-linear differential
$D\defeq \pi\circ d \colon S \to S$ over \(S_0\),
where $d \colon S \to \Omega_{S/S_0}$ is the universal derivation.

Since \(\pi\) is surjective, we can find an element $w=\sum_i r_i\,d(f_i)$ such that \(\pi(w) = 1\). Applying $\pi$ yields $1=\pi(w)=\sum_i r_i\,D(f_i)$.
The same proof as (1) shows that the element \(D(f_{i_0})\) belongs to \(S^\times = S_0^{\times}\) for some \(i_0\).
Hence, for $t\defeq f_{i_0}/D(f_{i_0})\in S$, we have $D(t)=1$.

If \(S\) is local, this element \(t\) belongs to \(\mfrakm_S\): If not, then \(t \in S^{\times} = S_0^{\times}\) and this implies \(D(t) = 0\) since \(D\) is a derivation over \(S_0\). This completes the proof of the local case.

If \(S\) is graded, \(\pi \colon S \to S(-a)\) is homogeneous for some \(a \in \setZ\) and thus \(D\) is homogeneous and \(t\) can be taken as a homogeneous element.
Note that $a>0$: Since \((\Omega_{S/S_0})_0 = 0\), the taken homogeneous element \(w\) should be positive degree. This satisfies \(D(w) = 1\) and thus \(a = \deg(w) > 0\) holds.
Since $D$ has negative degree, it is locally nilpotent: Indeed, for a homogeneous $f\in S$, choose $n_f\in\mathbb{N}$ such that $n_fa>\deg(f)$. Then $D^{n_f}(f)\in R_{\deg(f)-n_fa}=0$, so $D^{n_f}(f)=0$.
Consequently, this \(D\), \(a\), and \(t\) are the desired ones if \(S\) is graded.

(3): Assume \(S \supseteq \setQ\).
If \(S\) is graded, the graded slice theorem (see \cite[Proposition 2.1]{wright1981jacobian}) shows $R\cong (\ker D)[t]$ when \(\tr_S(\Omega_{S/S_0}) = S\).
Conversely, if \(\prk(S) \geq 1\), then \(S \cong T[x]\) for some invariable \(x\) and a graded sub \(S_0\)-algebra \(T\) of \(S\). Then \(\Omega_{S/S_0} \cong (\Omega_{T/S_0} \otimes_T S) \bigoplus S dx\) shows \(\tr_S(\Omega_{S/S_0}) = S\).

Assume \(S\) is local and complete. If \(\tr_S(\Omega_{S/S_0}) = S\), applying \cite{zariski1965Studies}*{Lemma 4}, the existence of the derivation \(D\) and a slice \(t\) imply that \(S\) is isomorphic to a formal power series ring \(\Ker(D)[|t|]\) with \(S_0 \subseteq \Ker(D)\). This shows \(\fprk(S) \geq 1\).
Conversely, if \(S\) is isomorphic to a formal power series ring \(T[|x|]\) for some sub \(S_0\)-algebra \(T\), then we have
\begin{equation*}
    \widehat{\Omega}_{S/S_0} \cong \widehat{\Omega}_{T[x]/S_0} \cong (\Omega_{T/S_0} \widehat{\otimes}_T T[x]) \oplus T[|x|] dx,
\end{equation*}
where the first isomorphism follows from \autoref{CompletionOmega}.
Therefore, \autoref{CompatibleCompletion} shows that \(\tr_S(\Omega_{S/S_0}) = \tr_S(\widehat{\Omega}_{S/S_0})\) is \(S\).
\end{proof}

\begin{theorem}\label{thm:EquivTrRank}
Let $S$ be an algebra over a ring $S_0$ satisfying \autoref{SetupSection3}. 
Assume that $S$ contains $\setQ$ and has finite Krull dimension $d \defeq \dim(S)$. 
Let $k$ be an integer with $0 \le k \le d$.
\begin{enumerate}[\rm (1)]
    \item If $S$ is graded and $\prk(S) \ge d-k$, then $\tr_S(\Omega^{d-k}_{S/S_0}) = S$. Conversely, if $\tr_S(\Omega^{d-k}_{S/S_0}) = S$, then $\cprk(S) \le k$.
    \item Assume $S$ is essentially of finite type over $S_0$ and \(S_0\) is Noetherian. 
    Let $\widehat{S}$ be the $\mfrakm_S$-adic completion of $S$. 
    Then $\fprk(\widehat{S}) \ge d-k$ if and only if $\tr_{\widehat{S}}(\widehat{\Omega}^{d-k}_{\widehat{S}/S_0}) = \widehat{S}$. 
\end{enumerate}
\end{theorem}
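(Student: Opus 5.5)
Both parts follow the same scheme: the forward implications are a direct computation of differentials of a polynomial or power series extension, and the converses come from iterating \autoref{prop:Yeahhhh}~(3) one variable at a time. For (1), suppose $\prk(S)\ge d-k$ and write $S\cong T[x_1,\dots,x_{n}]$ with $n\ge d-k$. Then
\[
\Omega^1_{S/S_0}\cong (\Omega^1_{T/S_0}\otimes_T S)\oplus \bigoplus_{i=1}^{n} S\,dx_i .
\]
Taking $(d-k)$-th exterior powers, the summand $\bigwedge^{d-k}\bigl(\bigoplus_i S\,dx_i\bigr)$ is free and nonzero, and it is a direct summand of $\Omega^{d-k}_{S/S_0}$. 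Hence $\tr_S(\Omega^{d-k}_{S/S_0})=S$ by \autoref{rem1}~(4).

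For the converse in (1), I would induct on $m\defeq d-k$. The case $m=0$ is trivial, since $\cprk(S)\le \dim S = d$. Suppose $m\ge 1$ and $\tr_S(\Omega^{m}_{S/S_0})=S$. By \autoref{prop:Yeahhhh}~(1) we have $\tr_S(\Omega^1_{S/S_0})=S$, so \autoref{prop:Yeahhhh}~(2),(3) give $S\cong T[t]$ with $T=\Ker D$ a graded subring containing $S_0$.

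The key step is to descend the hypothesis to $T$, namely to show $\tr_T(\Omega^{m-1}_{T/S_0})=T$. Since $\Omega^1_{S/S_0}\cong (\Omega^1_{T/S_0}\otimes_T S)\oplus S\,dt$, we get
\[
\Omega^m_{S/S_0}\cong (\Omega^m_{T/S_0}\otimes_T S)\oplus(\Omega^{m-1}_{T/S_0}\otimes_T S)\,dt .
\]
By \autoref{rem1}~(5), one of these two summands has trace $S$. Because $S$ is a free $T$-module, base change along $T\to S$ and then reduction modulo $t$ (via $S\to S/tS\cong T$) carries a surjection onto $S$ to a surjection onto $T$. Therefore $\tr_T(\Omega^{j}_{T/S_0})=T$ for $j=m$ or $j=m-1$, and in either case \autoref{prop:Yeahhhh}~(1) gives $j=m-1$.

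The obstacle is that $\dim T$ need not equal $d-1$ in the non-Noetherian setting, so I would phrase the induction as follows: if $\tr_S(\Omega^m_{S/S_0})=S$, then $S\cong T'[x_1,\dots,x_m]$ for some graded $T'$. This yields $\cprk(S)\le \dim T'$. Since $\dim T'[x_1,\dots,x_m]\ge \dim T'+m$, we get $\dim T'\le d-m=k$, which is the claim.

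For (2), I would run the same argument for the complete local ring $\widehat S$, using completed differentials and $\widehat S\cong \Ker(D)[\![t]\!]$ from \autoref{prop:Yeahhhh}~(3). The forward direction again uses
\[
\widehat\Omega^1_{T[\![x]\!]/S_0}\cong (\widehat\Omega_T\widehat\otimes T[\![x]\!])\oplus \textstyle\bigoplus T[\![x]\!]\,dx_i
\]
together with \autoref{CompatibleCompletion} and \autoref{CompletionOmega}. For the converse, the finite-type hypothesis guarantees that $\widehat S$ is Noetherian of dimension $d$. Then $\dim \Ker D=\dim\widehat S-1$, and the subring $\Ker D$ is again complete local with finitely generated maximal ideal, so the induction stays inside \autoref{SetupSection3}.

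The main obstacle I expect is making the descent to $T$ rigorous for completed exterior powers. Concretely, one must check that $\widehat\Omega^m$ of a power series ring splits with the $dt$-part as above, and that traces behave well under the completed base change $T\to T[\![t]\!]$. I would handle this by working modulo $\mfrakm^n$ and passing to the limit, as in \autoref{CompletionOmega}.
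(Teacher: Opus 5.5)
Your proof is correct, but it is organized differently from the paper's. The paper fixes a maximal splitting $S\cong T[x_1,\dots,x_e]$ with $e=\prk(S)$ and $\prk(T)=0$; here $e$ is finite because $\prk(S)\le d$. It expands $\Omega^{d-k}_{S/S_0}$ over all the $dx_j$ at once and pulls a free summand back to $T$ along $S\to T$. It then argues by contradiction on $\dim T$: if $\dim T>k$, the surviving exterior degree on $T$ is positive, which forces $\prk(T)\ge 1$.

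You instead split off one variable at a time with the slice theorem. By induction on $m$ you prove the dimension-free statement that $\tr_S(\Omega^m_{S/S_0})=S$ forces $S\cong T'[x_1,\dots,x_m]$. The ingredients are the same: exterior powers of a direct sum, \autoref{rem1}~(5), reduction modulo the new variable, and \autoref{prop:Yeahhhh}~(1),(3).

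What your version buys:
\begin{itemize}
\item It needs no maximal decomposition and no dimension count in the descent step.
\item It gives $\prk(S)\ge d-k$ directly in (1), with no Noetherian hypothesis, so (1) becomes an equivalence stated in terms of $\prk$.
\item By contrast, the paper records only $\cprk(S)\le k$, and in \autoref{thm:GradedCaseTrRank} it passes back to $\prk$ via \autoref{rem:prank}~(2). That said, the paper's own argument, read carefully, also forces the surviving index to be exactly $d-k\le e$.
\end{itemize}

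The cost of your route shows up in (2). Your induction hypothesis must be stated for arbitrary complete local $S_0$-algebras such as $\Ker(D)\cong \widehat{S}/t\widehat{S}$, not only for completions of essentially finite type algebras. So the facts you flag must be checked at every stage of the induction:
\begin{itemize}
\item finite generation of $\widehat{\Omega}^1_{T/S_0}$;
\item the splitting $\widehat{\Omega}^1_{T[\![t]\!]/S_0}\cong(\widehat{\Omega}^1_{T/S_0}\widehat{\otimes}_T T[\![t]\!])\oplus T[\![t]\!]\,dt$;
\item compatibility of this splitting with reduction modulo $t$.
\end{itemize}
The paper needs the same facts, but only once, for $T[\![x_1,\dots,x_e]\!]$, and it leaves them at the same sketch level.
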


\begin{proof}
We may assume $d-k > 0$, since $\tr_S(\Omega^0_{S/S_0}) = \tr_S(S) = S$ holds trivially.

(1): We first prove the case that \(S\) is graded.
Assume $\prk(S) \ge d-k$. Then there exists a graded $S_0$-algebra $T$ and a graded isomorphism $S \cong T[x_1, \dots, x_{d-k}]$ where each $x_i$ has positive degree. 
The module of differentials decomposes as $\Omega^1_{S/S_0} \cong (\Omega^1_{T/S_0} \otimes_T S) \oplus \bigoplus_{i=1}^{d-k} S dx_i$. 
Taking the $(d-k)$-th exterior power, the property of exterior powers of direct sums yields:
\begin{equation*}
    \Omega^{d-k}_{S/S_0} \cong \bigoplus_{i+j=d-k} \left( \bigwedge^i_S (\Omega^1_{T/S_0} \otimes_T S) \otimes_S \bigwedge^j_S \Big(\bigoplus_{m=1}^{d-k} S dx_m\Big) \right).
\end{equation*}
For $i=0$ and $j=d-k$, we obtain the $S$-free direct summand $\bigwedge^{d-k}_S (\bigoplus_{m=1}^{d-k} S dx_m) \cong S(dx_1 \wedge \dots \wedge dx_{d-k})$. 
Hence, $\tr_S(\Omega^{d-k}_{S/S_0}) = S$.

Conversely, assume $\tr_S(\Omega^{d-k}_{S/S_0}) = S$. By \autoref{rem1}~(2), since $d-k > 0$, we have $\tr_S(\Omega^1_{S/S_0}) = S$. 
Soince $S \supseteq \setQ$, \autoref{prop:Yeahhhh}~(3) implies $\prk(S) \ge 1$. 
Let $e = \prk(S)$. We can write $S \cong T[x_1, \dots, x_e]$ for some graded subalgebra $T$ such that $\prk(T) = 0$. 
Setting $F \defeq \bigoplus_{m=1}^e S dx_m$, we have:
\begin{align*}
\Omega^{d-k}_{S/S_0} = \bigwedge_S^{d-k}\Bigl((\Omega^1_{T/S_0}\otimes_T S) \oplus F\Bigr) 
&\cong \bigoplus_{j=0}^{{\min \{e, d-k\}}} \left(\Big(\bigwedge\nolimits_T^{d-k-j}\Omega^1_{T/S_0}\Big)\otimes_T S\right)\otimes_S \Big(\bigwedge\nolimits_S^{j}F\Big) \\
&\cong \bigoplus_{j=0}^{{\min \{e, d-k\}}}\; \bigoplus_{|J|=j}\; \left( \Big(\bigwedge\nolimits_T^{d-k-j}\Omega^1_{T/S_0}\Big)\otimes_T S\Big(-\sum_{k\in J}\deg(x_k)\Big) \right).
\end{align*}
Since $\tr_S(\Omega^{d-k}_{S/S_0}) = S$, \autoref{rem1}~(4) and (5) imply that there exists an integer $i$ such that $\big(\bigwedge^{d-k-i}_T \Omega^1_{T/S_0}\big) \otimes_T S$ has an $S$-free direct summand. 
Taking the base change along the surjection $S \twoheadrightarrow T$ reveals that $\bigwedge^{d-k-i}_T \Omega^1_{T/S_0}$ has a $T$-free direct summand, so $\tr_T(\Omega^{d-k-i}_{T/S_0}) = T$. 
If $\dim(T) > k$, then $d-k-i \ge d-k-e \geq \dim(T)-k > 0$. 
By \autoref{prop:Yeahhhh}~(1), this would force $\tr_T(\Omega^1_{T/S_0}) = T$, meaning $\prk(T) \ge 1$, which contradicts $\prk(T) = 0$. 
Therefore, $\dim(T) \le k$, yielding $\cprk(S) \le k$.

(2): We next assume that \(S\) is essentially of finite type over \(S_0\).
Set $d = \dim(S) = \dim(\widehat{S})$. We may assume $d-k > 0$. 
Since $S$ is essentially of finite type over $S_0$, $\Omega^1_{S/S_0}$ is a finitely generated $S$-module. 
Because $S$ is Noetherian, the $\mfrakm_S$-adic completion of the $i$-th exterior power $\Omega^i_{S/S_0}$ is canonically isomorphic to the $i$-th exterior power of the completed module of differentials:
\begin{equation*}
    \widehat{\Omega}^i_{\widehat{S}/S_0} \cong \big(\bigwedge^i_S \Omega^1_{S/S_0}\big) \otimes_S \widehat{S} \cong \bigwedge^i_{\widehat{S}} \big(\Omega^1_{S/S_0} \otimes_S \widehat{S}\big) = \bigwedge^i_{\widehat{S}} \widehat{\Omega}^1_{\widehat{S}/S_0}.
\end{equation*}
Also, recall from \autoref{CompatibleCompletion} that $\tr_{\widehat{S}}(\widehat{\Omega}^i_{\widehat{S}/S_0}) = \widehat{S}$ if and only if $\widehat{\Omega}^i_{\widehat{S}/S_0}$ has an $\widehat{S}$-free direct summand.

If \(\widehat{S}\) is isomorphic to a formal power series ring \(T[|x_1, \dots, x_e|]\) for some complete local \(S_0\)-algebra \(T\), we have
\begin{equation*}
    \widehat{\Omega}_{\widehat{S}/S_0} \cong  (\widehat{\Omega}^1_{T/S_0} \widehat{\otimes}_T \widehat{S}) \oplus \bigoplus_{j=1}^{e} \widehat{S} dx_j.
\end{equation*}
Using these isomorphisms, the similar proof of (1) works for local case. So we omit the details.

$(\Rightarrow)$: Assume $\fprk(\widehat{S}) \ge d-k$. Then there exists a complete local $S_0$-algebra $T$ such that $\widehat{S} \cong T[\![x_1, \dots, x_{d-k}]\!]$. 
As in the proof of (1), the completed module of differentials decomposes as $\widehat{\Omega}^1_{\widehat{S}/S_0} \cong (\widehat{\Omega}^1_{T/S_0} \widehat{\otimes}_T \widehat{S}) \oplus \bigoplus_{j=1}^{d-k} \widehat{S} dx_j$. 
Taking the $(d-k)$-th exterior power:
\begin{equation*}
    \widehat{\Omega}^{d-k}_{\widehat{S}/S_0} \cong \bigoplus_{i+j=d-k} \left( \bigwedge^i_{\widehat{S}} (\widehat{\Omega}^1_{T/S_0} \widehat{\otimes}_T \widehat{S}) \otimes_{\widehat{S}} \bigwedge^j_{\widehat{S}} \big(\bigoplus_{m=1}^{d-k} \widehat{S} dx_m\big) \right).
\end{equation*}
The term for $i=0, j=d-k$ is $\bigwedge^{d-k}_{\widehat{S}} (\bigoplus_{m=1}^{d-k} \widehat{S} dx_m) \cong \widehat{S}(dx_1 \wedge \dots \wedge dx_{d-k})$, which is an $\widehat{S}$-free direct summand of rank 1. Thus $\tr_{\widehat{S}}(\widehat{\Omega}^{d-k}_{\widehat{S}/S_0}) = \widehat{S}$.

$(\Leftarrow)$: Assume $\tr_{\widehat{S}}(\widehat{\Omega}^{d-k}_{\widehat{S}/S_0}) = \widehat{S}$. Since $d-k > 0$, we have $\tr_{\widehat{S}}(\widehat{\Omega}^1_{\widehat{S}/S_0}) = \widehat{S}$ by \autoref{rem1}~(2). 
As $\widehat{S}$ contains $\setQ$, \autoref{prop:Yeahhhh}~(3) implies $\fprk(\widehat{S}) \ge 1$. 
Let $e = \fprk(\widehat{S})$ be the formal power series rank, so $\widehat{S} \cong T[\![x_1, \dots, x_e]\!]$ where $\fprk(T) = 0$. 
Then $\dim(\widehat{S}) = \dim(T) + e$. 
Using the direct sum decomposition of $\widehat{\Omega}^{d-k}_{\widehat{S}/S_0}$ as above:
\begin{equation*}
    \widehat{\Omega}^{d-k}_{\widehat{S}/S_0} \cong \bigoplus_{i=0}^{\min\{e, d-k\}} \left( \big(\bigwedge^{d-k-i}_T \widehat{\Omega}^1_{T/S_0}\big) \widehat{\otimes}_T \widehat{S} \right) \otimes_{\widehat{S}} \bigwedge^i_{\widehat{S}} \big(\bigoplus_{m=1}^e \widehat{S} dx_m\big).
\end{equation*}
The trace condition $\tr_{\widehat{S}}(\widehat{\Omega}^{d-k}_{\widehat{S}/S_0}) = \widehat{S}$ implies that there exists some $i$ such that $\big(\bigwedge^{d-k-i}_T \widehat{\Omega}^1_{T/S_0}\big) \widehat{\otimes}_T \widehat{S}$ has an $\widehat{S}$-free direct summand. 
Base change to \(T\) shows $\tr_T(\widehat{\Omega}^{d-k-i}_{T/S_0}) = T$. 
If $\dim(T) > k$, then $d-k-i \ge d-k-e = \dim(T)-k > 0$. 
This would force $\tr_T(\widehat{\Omega}^1_{T/S_0}) = T$, and hence $\fprk(T) \ge 1$, which contradicts our choice of $T$. 
Therefore $\dim(T) \le k$, and $\cfprk(\widehat{S}) \le k$.
\end{proof}

\begin{corollary}\label{thm:GradedCaseTrRank}
Assume that \(S\) is graded.
Let $k$ be an integer with $0 \le k \leq \dim(S)$.
Assume that $S$ is Noetherian and contains $\setQ$.
Then \(\prk(S) \ge \dim(S)-k\) if and only if \(\tr_R(\Omega^{\dim(S)-k}_{S/S_0}) = S\).
In particular, $S$ is isomorphic to a graded polynomial ring over a graded \(0\)-dimensional $\KK$-algebra if and only if $\tr_R(\Omega^{\dim(S)}_{S/S_0}) = S$.
\end{corollary}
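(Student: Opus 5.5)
The corollary should follow directly from \autoref{thm:EquivTrRank}~(1) together with \autoref{rem:prank}~(2). Set $d \defeq \dim(S)$. Since $S$ is Noetherian, we may take $d<\infty$; otherwise the statement has no content, because $k$ ranges over integers with $0\le k\le \dim(S)$. The case $d-k=0$ is trivial, since $\prk(S)\ge 0$ always holds and $\tr_S(\Omega^0_{S/S_0})=\tr_S(S)=S$. So assume $d-k>0$.

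For the forward direction, assume $\prk(S)\ge d-k$. Then \autoref{thm:EquivTrRank}~(1) gives $\tr_S(\Omega^{d-k}_{S/S_0})=S$ at once. For the converse, assume $\tr_S(\Omega^{d-k}_{S/S_0})=S$. \autoref{thm:EquivTrRank}~(1) gives $\cprk(S)\le k$. Because $S$ is Noetherian of finite dimension, \autoref{rem:prank}~(2) says $\cprk(S)=d-\prk(S)$, and hence $\prk(S)\ge d-k$.

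The one point I would check carefully is \autoref{rem:prank}~(2), since the whole reduction depends on it. Write $S\cong T[x_1,\dots,x_n]$ as a graded polynomial extension. Then $T\cong S/(x_1,\dots,x_n)$ is Noetherian, so $\dim S=\dim T+n$. Consequently, minimizing $\dim T$ is the same as maximizing $n$, and the infimum and supremum are attained because all values are bounded by $d$. This is the step I expect to need the most care, but it is standard.

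For the final assertion, take $k=0$. Then $\tr_S(\Omega^{d}_{S/S_0})=S$ if and only if $\prk(S)\ge d$, i.e.\ $S\cong T[x_1,\dots,x_d]$ with $\dim T=d-d=0$. In that case $T$ is a graded subring of dimension $0$, which is exactly a graded $0$-dimensional algebra over the base. Conversely, such a presentation gives $\prk(S)\ge \dim S$ directly.
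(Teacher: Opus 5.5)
Your proposal is correct and is exactly the paper's argument: the paper proves this corollary in one line by combining \autoref{thm:EquivTrRank}~(1) with \autoref{rem:prank}~(2). Your extra check of \autoref{rem:prank}~(2) is what justifies that identity. Since $T\cong S/(x_1,\dots,x_n)$ is Noetherian, $\dim T[x_1,\dots,x_n]=\dim T+n$, so minimizing $\dim T$ is the same as maximizing $n$.
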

\begin{proof}
It follows from \autoref{thm:EquivTrRank} and \autoref{rem:prank}~(2).
\end{proof}

\begin{corollary}\label{t1hm:EquivRegular1}
Assume that \(S\) is graded and moreover \(S\) is a Noetherian reduced \(\mathbb{Q}\)-algebra of finite Krull dimension such that \(S_0\) is a regular local ring.
Then the following hold:
\begin{enumerate}[\rm (1)]
\item Assume that $R_0$ is a field. Then \(S\) is regular if and only if \(\tr_R(\Omega^{\dim(S)}_{S/S_0}) = S\);
\item Assume that $S$ is a domain. Then \(S\) is regular if and only if \(\tr_S(\Omega^{\dim(S)-\dim(S_0)}_{S/S_0}) = S\).
\end{enumerate}
\end{corollary}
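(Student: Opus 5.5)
We derive both parts from \autoref{thm:GradedCaseTrRank} and \autoref{thm:EquivTrRank}, together with the structure of Noetherian graded rings that are polynomial extensions.

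\emph{Part (1).} Here $S_0=\kk$ is a field of characteristic $0$.

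Suppose first that $S$ is regular. A Noetherian graded ring with $S_0$ a field is regular exactly when $S_{\mfrakm}$ is regular, since the singular locus is closed and stable under the grading. In that case $S$ is a graded polynomial ring $\kk[x_1,\dots,x_d]$: take homogeneous minimal generators of $\mfrakm$; regularity forces their number to equal $d=\dim S$, and the graded Nakayama lemma shows they generate $S$ as a $\kk$-algebra. Hence $\prk(S)=\dim S$. Then \autoref{thm:GradedCaseTrRank} with $k=0$ gives $\tr_S(\Omega^{\dim S}_{S/\kk})=S$.

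Conversely, suppose the trace is $S$. By \autoref{thm:GradedCaseTrRank}, $S\cong T[x_1,\dots,x_d]$ with $\dim T=0$ and $T$ graded with $T_0=\kk$. A zero-dimensional $\ZZ_{\ge0}$-graded ring with $T_0=\kk$ has $T_+$ nilpotent. Since $S$ is reduced, $T$ is reduced, so $T_+=0$ and $T=\kk$. Therefore $S$ is a polynomial ring over $\kk$, hence regular.

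\emph{Part (2).} Here $S$ is a domain and $S_0$ is a regular local ring. I set $e\defeq\dim S-\dim S_0$.

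For the forward direction I would argue as in part (1), now over $S_0$. Regularity at $\mfrakm_S=\mfrakm_0+S_+$ lets one choose homogeneous elements of positive degree in $S_+$ whose images form a minimal generating set of $\mfrakm_S/(\mfrakm_0S+\mfrakm_S^2)$. A graded Nakayama argument over $S_0$ shows that these generate $S$ as an $S_0$-algebra. Since $S/\mfrakm_0S$ is then regular of dimension $e$, there are exactly $e$ of them. Because $S$ is a domain of dimension $\dim S_0+e$, these $e$ elements are algebraically independent over $S_0$, so $S\cong S_0[x_1,\dots,x_e]$. This gives $\prk(S)\ge e$, and the trace condition follows from \autoref{thm:EquivTrRank}(1).

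For the converse, \autoref{thm:GradedCaseTrRank} with $k=\dim S_0$ gives $S\cong T[x_1,\dots,x_e]$ with $\dim T=\dim S_0$. Here $T$ is a Noetherian graded domain with $T_0=S_0$ and $\dim T=\dim T_0$. I claim $T_+=0$. If $0\ne f\in T_+$ is homogeneous, then $f$ is not nilpotent because $T$ is a domain. Take a maximal chain of primes $\mfrakp_0\subsetneq\cdots\subsetneq\mfrakp_r$ in $S_0$, so $r=\dim S_0$, and extend it to graded primes of $T$ via $\mfrakp_iT+T_+$ in $T/T_+\cong T_0$. Prepending the prime $0$ lengthens the chain, because $0\subsetneq \mfrakp_0T+T_+$ (it contains $f\ne0$). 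This gives $\dim T\ge \dim S_0+1$, a contradiction. Hence $T=S_0$, so $S=S_0[x_1,\dots,x_e]$, which is regular because $S_0$ is.

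The main obstacle is the forward direction of (2): showing that a regular graded domain over a regular local $S_0$ is a polynomial ring over $S_0$. If the Nakayama/regular-sequence argument becomes delicate, a cleaner route is to show $\tr(\Omega^e_{S/S_0})=S$ directly. Since $S$ is regular and $S_0\to S$ is a graded extension, $\Omega_{S/S_0}$ should be locally free of rank $e$ near $\mfrakm_S$ (smoothness of $S_0\to S$ in characteristic $0$ on the fibers). By \autoref{rem1}(3) and the fact that a proper graded ideal lies in $\mfrakm_S$, this would give the trace condition.
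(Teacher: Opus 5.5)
Your proposal is correct and follows essentially the same route as the paper: both parts are reduced via \autoref{thm:GradedCaseTrRank} (and \autoref{thm:EquivTrRank}~(1)) to a statement about polynomial rank. The base ring $T$ is then forced to equal $S_0$, by reducedness in (1) and by a dimension count using the domain hypothesis in (2). You also make explicit two points the paper leaves implicit: the graded-Nakayama/embedding-dimension argument that a regular $S$ is a graded polynomial ring over $S_0$ (the count works because $\mathfrak{m}_S/\mathfrak{m}_S^2$ has degree-zero part $\mathfrak{m}_0/\mathfrak{m}_0^2$), and the chain $0\subsetneq \mathfrak{p}_0T+T_+\subsetneq\cdots\subsetneq\mathfrak{p}_rT+T_+$, which justifies $\dim T\ge \dim S_0+1$ more cleanly than the paper's comparison $\dim T\ge\dim S_0[a]$ with a subring.
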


\begin{proof}
Notice that, under this assumption, $S$ is regular if and only if $S$ is isomorphic to a polynomial ring over $S_0$ in $\dim(S)-\dim(S_0)$ variables.
Then (1) follows from \autoref{thm:GradedCaseTrRank}.

(2):
By \autoref{thm:GradedCaseTrRank}, the equality
$\tr_S(\Omega^{\dim(S)-\dim(S_0)}_{S/S_0})=S$
is equivalent to the inequality
$\prk(S)\ge \dim(S)-\dim(S_0)$.
Thus it suffices to show that $S$ is regular if and only if
$\prk(S)\ge \dim(S)-\dim(S_0)$.

If $S$ is regular, then $S$ is isomorphic, as a graded ring, to a polynomial ring over $S_0$ in $\dim(S)-\dim(S_0)$ homogeneous variables. Hence $\prk(S)\ge \dim(S)-\dim(S_0)$.

Conversely, assume that $\prk(S)\ge \dim(S)-\dim(S_0)$. Then there exist a graded subring $T\subseteq S$ and a graded ring isomorphism
$S\cong T[x_1,\dots,x_N]$
for some $N\ge \dim(S)-\dim(S_0)$, where each $x_i$ is homogeneous of positive degree.

Since $S$ is Noetherian, $T$ is also Noetherian.
Hence $\dim(S)=\dim(T)+N$, so $\dim(T)=\dim(S)-N\le \dim(S_0)$.

We claim that $T=S_0$. Suppose otherwise. Then there exists a nonzero homogeneous element $a\in T_n$ for some $n>0$. We show that $a$ is algebraically independent over $S_0$. Indeed, assume that there exists a relation
$c_0+c_1a+\cdots+c_m a^m=0$
with $c_i\in S_0$. Since each term $c_i a^i$ is homogeneous of degree $in$, these terms have pairwise distinct degrees. As $T=\bigoplus_{j\ge 0}T_j$ is a direct sum decomposition, each term must vanish. Since $T$ is a domain and $a\neq 0$, it follows that $c_i=0$ for all $i$, a contradiction. Thus $a$ is transcendental over $S_0$.

Therefore $\dim(T)\ge \dim(S_0[a])=\dim(S_0)+1$, contradicting $\dim(T)\le \dim(S_0)$. Thus $T=S_0$.
It follows that $S\cong S_0[x_1,\dots,x_N]$. Since $\dim(S)=\dim(S_0)+N$, we have $N=\dim(S)-\dim(S_0)$. Therefore $S$ is a polynomial ring over
$S_0$, and hence $S$ is regular.
\end{proof}

\begin{remark}
The domain assumption in \autoref{t1hm:EquivRegular1}~(2) is essential.
It cannot be replaced by the weaker assumption that \(R\) is reduced.
For example, let \(R=\mathbb{Q}[[t]][u]/(tu)\), where \(\deg(t)=0\) and \(\deg(u)=1\), so that \(R_0=\mathbb{Q}[[t]]\).
Then \(R\) is reduced, \(\dim(R)=\dim(R_0)=1\), and hence
\(\Omega^{\dim(R)-\dim(R_0)}_{R/R_0}=\Omega^0_{R/R_0}=R\).
Thus \(\tr_R(\Omega^{\dim(R)-\dim(R_0)}_{R/R_0})=R\), whereas \(R\) is not regular.
\end{remark}

\begin{corollary}\label{thm:LocalEquivRegular}
Let $\kk$ be a field of characteristic $0$. Assume that $A$ is a reduced local $\kk$-algebra essentially of finite type, and let $d \defeq \dim(A)$. 
Then $A$ is regular if and only if $\tr_A(\Omega^d_{A/\kk}) = A$.\footnote{In \autoref{sect4}, we can give an alternative proof of this equivalence under the assumption that \(A\) is equidimensional. The proof relies on algebro-geometric methods such as Grassmaniann, Nash transform, and Nobile's theorem.}
\end{corollary}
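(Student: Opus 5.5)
We argue the two implications separately. The forward direction is essentially the Jacobian criterion. The converse is reduced, via completion and the slice argument of \autoref{prop:Yeahhhh}, to showing that $\widehat{A}$ is a formal power series ring in $d$ variables over a zero-dimensional reduced ring.

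\emph{Regular $\Rightarrow$ trace is $A$.} Suppose $A$ is regular. Since $\kk$ has characteristic $0$ and $A$ is a localization of a finitely generated $\kk$-algebra, $\Omega^1_{A/\kk}$ is free of rank $d+\operatorname{trdeg}_{\kk}(A/\mfrakm)\ge d$. Hence $\Omega^d_{A/\kk}$ is free of positive rank, and $\tr_A(\Omega^d_{A/\kk})=A$ by \autoref{rem1}~(4).

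\emph{Trace is $A$ $\Rightarrow$ regular; producing derivations.} Assume $\tr_A(\Omega^d_{A/\kk})=A$ with $d>0$; if $d=0$, then $A$ is a reduced Artinian local ring, hence a field. By \autoref{rem1}~(4) there is a surjection $\pi\colon\Omega^d_{A/\kk}\to A$. Exactly as in the proof of \autoref{prop:Yeahhhh}~(1), some generator $dx_1\wedge\cdots\wedge dx_d$ maps to a unit, which we normalize to $1$. We then define $\kk$-derivations
\[
D_i(f)\defeq\pi(dx_1\wedge\cdots\wedge dx_{i-1}\wedge df\wedge dx_{i+1}\wedge\cdots\wedge dx_d),
\]
and alternation gives $D_i(x_j)=\delta_{ij}$. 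The $x_j$ need not lie in $\mfrakm$, so we repair this as follows. Each $D_i$ satisfies $D_i(\mfrakm^2)\subseteq\mfrakm$, so it induces an $A$-linear functional $\bar D_i\colon\mfrakm/\mfrakm^2\to A/\mfrakm$.

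We will show these $d$ functionals are linearly independent over $K=A/\mfrakm$. If this fails, some combination $D=\sum_i c_iD_i$, with the $c_i\in A$ not all in $\mfrakm$, satisfies $D(\mfrakm)\subseteq\mfrakm$. Then $D$ induces a $\kk$-derivation of the field $K$. Since $K$ is finitely generated over $\kk$ in characteristic $0$, comparing with the vectors $(D_i(x_j))$ should force a contradiction. This is the step I expect to need the most care, because the $x_j$ may be units whose residues are transcendental over $\kk$. If it resists, the fallback is to replace the $x_j$ by a linear change with coefficients in $A$ so that the Jacobian matrix stays invertible.

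Granting independence, we choose $y_1,\dots,y_d\in\mfrakm$ with $\det(D_i(y_j))\in A^\times$. Replacing the $D_i$ by the combinations given by the inverse matrix, we get $D_i(y_j)=\delta_{ij}$ with all $y_j\in\mfrakm$.

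\emph{Passage to the completion.} Each $D_i$ is $\mfrakm$-adically continuous, since $D_i(\mfrakm^n)\subseteq\mfrakm^{n-1}$, so it extends to a derivation of $\widehat{A}$. We now induct using Zariski's lemma \cite{zariski1965Studies}*{Lemma 4}, which needs only $\widehat{A}\supseteq\setQ$, completeness, and a slice in the maximal ideal. Since $D_1(y_1)=1$, we get $\widehat{A}\cong(\ker D_1)[\![y_1]\!]$, and $\ker D_1\cong\widehat{A}/(y_1)$ is complete local. For $i\ge2$ we have $D_i(y_1)=0$, so $D_i(y_1\widehat{A})\subseteq y_1\widehat{A}$, and $D_i$ descends to $\widehat{A}/(y_1)$ with slice $\bar y_i$. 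Iterating gives
\[
\widehat{A}\cong T[\![y_1,\dots,y_d]\!],\qquad \dim T=\dim\widehat{A}-d=0.
\]
This is the same mechanism as \autoref{thm:EquivTrRank}~(2), but it avoids that theorem's requirement $S_0^\times=S^\times$, which fails here.

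\emph{Conclusion.} Because $A$ is reduced and excellent (it is essentially of finite type over a field), $\widehat{A}$ is reduced, and therefore so is $T$. A reduced zero-dimensional local ring is a field. Hence $\widehat{A}$ is a formal power series ring over a field, so $\widehat{A}$ is regular, and therefore $A$ is regular.
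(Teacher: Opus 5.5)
\textbf{There is a genuine gap at the step you flagged, and it cannot be closed.} The independence of the $\bar D_i$ fails in general. Neither your comparison argument nor the fallback can repair it, because the statement itself fails when $K=A/\mathfrak m$ is transcendental over $\kk$.

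\textbf{Counterexample.} Take $A=(\kk[x,y,w]/(xy))_{(x,y)}$.
\begin{itemize}
\item $A$ is reduced, local and essentially of finite type, with $d=\dim A=1$ and $K=\kk(w)$.
\item $A$ is not regular, not even a domain, since $xy=0$ with $x,y\neq 0$ in $A$.
\item Since $\partial/\partial w$ kills $xy$, it induces a $\kk$-derivation $\partial_w$ of $A$ with $\partial_w(w)=1$. Equivalently, $\Omega^1_{A/\kk}\cong\bigl((A\,dx\oplus A\,dy)/(y\,dx+x\,dy)\bigr)\oplus A\,dw$, so $\tr_A(\Omega^1_{A/\kk})=A$.
\end{itemize}
In your construction this yields $x_1=w$ and $D_1=\partial_w$. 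This derivation satisfies $D_1(\mathfrak m)\subseteq\mathfrak m$, so $\bar D_1=0$. The induced derivation of $K$ is $\partial_w\neq 0$, so nothing is contradicted.

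Moreover, no $\kk$-derivation of $A$ can send an element of $\mathfrak m$ to a unit. Otherwise your own completion step would give $\widehat A\cong T[\![s]\!]$ with $T$ a field, making $A$ regular. So no choice of $\pi$ and no change of the $x_j$ can help.

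\textbf{The paper's proof has the same hole.} It quotes \autoref{thm:EquivTrRank}~(2), whose setup requires $S_0^\times=S^\times$. As you correctly observed, this fails for $S_0=\kk$. That hypothesis is exactly what \autoref{prop:Yeahhhh}~(2) uses to place the slice in the maximal ideal. In the example above the slice $w$ is a unit, and indeed $\tr_{\widehat A}(\widehat\Omega^1_{\widehat A/\kk})=\widehat A$ while $\fprk(\widehat A)=0<d$.

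\textbf{What your argument does prove.} It proves the statement when $K$ is algebraic over $\kk$, for instance when $A$ is a localization at a maximal ideal. Then $\bar D$ is a $\kk$-derivation of a finite separable extension, hence zero. This contradicts $\bar D(\bar x_j)=\bar c_j\neq 0$ for some $j$. So the $\bar D_i$ are independent, and the rest of your proof goes through: continuity, the iterated Zariski lemma, and reducedness of $\widehat A$ by excellence. In that range your proof is more careful than the paper's.

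\textbf{The correct general statement.} For general $A$ the right exponent is $d+t$, where $t=\operatorname{trdeg}_\kk K$, and your method proves that version too.
\begin{itemize}
\item A surjection $\Omega^{d+t}_{A/\kk}\twoheadrightarrow A$ gives derivations $D_1,\dots,D_{d+t}$ with $D_i(x_j)=\delta_{ij}$.
\item Hence there is a surjection $\Omega_{A/\kk}\otimes_A K\twoheadrightarrow K^{d+t}$.
\item The map $\mathfrak m/\mathfrak m^2\to\Omega_{A/\kk}\otimes_A K$ has cokernel $\Omega_{K/\kk}$, which has dimension $t$. So the image of $\mathfrak m/\mathfrak m^2$ in $K^{d+t}$ has dimension at least $d$.
\item This produces $y_1,\dots,y_d\in\mathfrak m$ and $d$ of the $D_i$ with $\det(D_i(y_j))\in A^\times$. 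You then finish exactly as before.
\end{itemize}
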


\begin{proof}
If $A$ is regular, it is smooth over the perfect field $\kk$ (\citeSta{038X}). Thus $\Omega^1_{A/\kk}$ is a finite free $A$-module of rank $\dim(A) + \operatorname{trdeg}_{\kk}(A/\mfrakm_A) \ge d$. Consequently, the exterior power $\Omega^d_{A/\kk}$ has an $A$-free direct summand, which implies $\tr_A(\Omega^d_{A/\kk}) = A$.

Conversely, assume $\tr_A(\Omega^d_{A/\kk}) = A$. By \autoref{thm:EquivTrRank}~(2), this is equivalent to $\fprk(\widehat{A}) \ge d$, where $\widehat{A}$ is the $\mfrakm_A$-adic completion of $A$. 
Thus, $\widehat{A} \cong T[\![x_1, \dots, x_d]\!]$ for some complete local $\kk$-algebra $T$. 
Since $\dim(\widehat{A}) = d$, we have $\dim(T) = 0$. 
Because $A$ is a local ring essentially of finite type over a field of characteristic $0$, $A$ is an excellent ring. 
Therefore, since $A$ is reduced, its completion $\widehat{A}$ is also reduced. 
As $\widehat{A} \cong T[\![x_1, \dots, x_d]\!]$ is reduced, the $0$-dimensional local ring $T$ must be reduced, and hence is a field. 
Therefore, $\widehat{A}$ is a formal power series ring over a field, meaning $\widehat{A}$ is a regular local ring. 
By the faithful flatness of completion, $A$ is also a regular local ring.
\end{proof}

\section{Top differential trace and the Singular Locus}\label{sect5}

In this section, we investigate the relationship between the algebraic variety defined by the trace ideal of top differentials and the singular locus. We prove that they coincide when the residue field has characteristic \(0\) and the ring is reduced and equidimensional (see \autoref{thm:verynice2}).

\begin{setup}\label{setup4}
Let \(\kk\) be a field, and let \(S\) be a commutative ring. We assume that \(S\) satisfies one of the following conditions:
\begin{enumerate}[\rm (1)]
\item
\(S\) is a Noetherian \(\ZZ_{\ge 0}\)-graded ring such that \(S_0=\kk\);
\item
\(S\) is a local \(\kk\)-algebra essentially of finite type.
\end{enumerate}
In case (2),
let $\mathfrak{m}_S$ denote the unique maximal ideal of $S$; in case (2), let $\mathfrak{m}_S$ denote the unique graded maximal ideal of $S$.
\end{setup}

\begin{remark}\label{rem:canonical-module-exists}
Under \autoref{setup4}, \(S\) admits a canonical module $\omega_S$.
Indeed, in case \rm(ii), \(S\) is a homomorphic image of a regular local ring, hence of a Gorenstein local ring. In case \rm(i), \(S\) is a homomorphic image of a polynomial ring over \(\kk\), hence of a Gorenstein ring. Therefore \(S\) admits a canonical module. Moreover, in case \rm(ii), \(S\) admits a graded canonical module
~(see \cite[Definition~(2.1.2)]{goto1978graded}).
\end{remark}

\begin{lemma}\label{ann0}
Assume that $S$ is a domain and set $K\defeq \Frac(S)$.
Then
$\Omega_{K/\kk}^i\neq 0$,
$\Ann_S\!\Bigl(\Omega_{S/\kk}^{i}\Bigr)=(0)$
and $\tr_S \Bigl(\Omega_{S/\kk}^{i}\Bigr) \neq (0)$
for every integer $i$ with $0\le i\le \dim(S)$.
\end{lemma}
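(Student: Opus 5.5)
The plan is to reduce everything to a statement about the generic point. Let $M_i \defeq \Omega^i_{S/\kk}$. Since localization commutes with forming differentials and exterior powers, there is a canonical isomorphism
\[
M_i \otimes_S K \cong \bigwedge\nolimits^i_K \Omega^1_{K/\kk} = \Omega^i_{K/\kk}.
\]
So I would first establish that $\Omega^i_{K/\kk}\neq 0$, and then derive the two other assertions from this by purely formal arguments.

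\emph{Step 1: $\Omega^i_{K/\kk}\neq 0$ for $0\le i\le \dim(S)$.} In both cases of \autoref{setup4}, $K$ is a finitely generated field extension of $\kk$.
\begin{itemize}
\item In case (1), a Noetherian graded ring with $S_0=\kk$ is a finitely generated $\kk$-algebra. Hence $\operatorname{trdeg}_\kk K=\dim(S)$.
\item In case (2), write $S=B_{\pp}$ with $B$ a finitely generated $\kk$-domain. Then $\dim(S)=\HT\pp\le \dim B=\operatorname{trdeg}_\kk K$.
\end{itemize}
Next I invoke the classical inequality $\dim_K\Omega^1_{K/\kk}\ge \operatorname{trdeg}_\kk K$ for finitely generated field extensions. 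Equality holds when $K/\kk$ is separably generated; in general $\Omega^1_{K/\kk}$ can only be larger, by Cartier's equality. This gives $\dim_K\Omega^1_{K/\kk}\ge \dim(S)\ge i$, so the $i$-th exterior power of this vector space is nonzero. This step needs care only because $\kk$ is not assumed perfect, so I would quote the inequality rather than the equality. In characteristic $0$ the equality case, which is used in \autoref{prop:3.8}, already suffices.

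\emph{Step 2: $\Ann_S(M_i)=0$.} Suppose $0\neq a\in S$ kills $M_i$. Then $a$ kills $M_i\otimes_S K\cong\Omega^i_{K/\kk}$. But $a$ is a unit in $K$, so $\Omega^i_{K/\kk}=0$, contradicting Step 1.

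\emph{Step 3: $\tr_S(M_i)\neq 0$.} The module $M_i$ is finitely presented over the Noetherian ring $S$, because $\Omega^1_{S/\kk}$ is finitely generated: $S$ is finitely generated, or essentially of finite type, over $\kk$. Therefore
\[
\Hom_S(M_i,S)\otimes_S K\cong \Hom_K(M_i\otimes_S K,K),
\]
and the right-hand side is nonzero by Step 1. Hence $\Hom_S(M_i,S)\neq 0$. Any nonzero $\phi\in\Hom_S(M_i,S)$ has nonzero image, so $\tr_S(M_i)\supseteq\phi(M_i)\neq 0$. Concretely, one can take a nonzero $K$-linear functional on $\Omega^i_{K/\kk}$, restrict it to the image of $M_i$, and clear the denominators of its values on a finite generating set to get a map into $S$.

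The only step with real content is Step 1: the transcendence-degree comparison in the local case and the inequality $\dim_K\Omega^1_{K/\kk}\ge\operatorname{trdeg}_\kk K$ over a possibly imperfect field. I expect this to be the main obstacle, though it is handled by standard field theory. Steps 2 and 3 are formal consequences.
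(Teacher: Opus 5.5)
Your proof is correct and follows the paper's argument: pass to $K$, use $\dim_K\Omega^1_{K/\kk}\ge\operatorname{trdeg}_\kk K\ge\dim(S)$, and then derive the annihilator and trace statements formally. Your Step~1 is in fact more careful than the paper's, which asserts $\operatorname{trdeg}_\kk K=\dim(S)$ even in the local case (2), where only the inequality $\dim(S)=\HT\pp\le\operatorname{trdeg}_\kk K$ holds (and suffices). In Step~3 the paper deduces $\Hom_S(\Omega^i_{S/\kk},S)\neq 0$ from the vanishing of the annihilator via a Bruns--Herzog criterion (Proposition 1.2.3), whereas you use flat base change of $\Hom$ to $K$; both routes work.
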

\begin{proof}
Set $M\defeq \Omega_{S/\kk}^i$.
Since the module of differentials and exterior powers commute with localization, we have
$M\otimes_S K
\cong
\Omega_{K/\kk}^i$.
Since $S$ is a finitely generated domain over $\kk$, we have $\trdeg_\kk(K)=\dim(S)$.
Moreover, since $K/\kk$ is a finitely generated field extension,
one has $\dim_K\Omega_{K/\kk}\ge \trdeg_\kk(K) \ge i$.
Hence $\dim_K\Omega_{K/\kk}\ge i$, and therefore $\Omega_{K/\kk}^i\neq 0$.
Consequently $M\otimes_S K\neq 0$.

Now let $a\in \Ann_S(M)$. Tensoring the equality $aM=0$ with $K$ gives
$(a/1)\cdot(M\otimes_S K)=0$.
Since $M\otimes_S K$ is a nonzero $K$-vector space, this implies $a/1=0$ in $K$.
As $S$ is a domain,
we have $a=0$.
Thus $\Ann_S(M)=(0)$.
By \cite[Proposition 1.2.3 (b)]{BH}, this implies that
$\tr_S(M) \neq (0)$.
\end{proof}

\begin{proposition}\label{lem:ann1}
$S$ is reduced and equidimensional.
Then
$\tr_S(\Omega_{S/\kk}^{\dim(S)})_{\mfrakp}\neq 0$ for all $\mfrakp \in\Spec(S)$.
\end{proposition}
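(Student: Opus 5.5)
Write $d\defeq \dim(S)$ and $M\defeq \Omega^{d}_{S/\kk}$. The plan is to reduce the claim to minimal primes and then use equidimensionality to show that $M$ is free of rank one at each minimal prime. Since $S$ is Noetherian and essentially of finite type over $\kk$, $M$ is finitely presented. Hence \autoref{rem1}~(3) gives $\tr_S(M)_{\mfrakp}=\tr_{S_\mfrakp}(M_\mfrakp)$ for every $\mfrakp\in\Spec(S)$. Every prime $\mfrakp$ contains some minimal prime $\mfrakq$, and $\tr_S(M)_{\mfrakq}$ is a further localization of $\tr_S(M)_{\mfrakp}$. It therefore suffices to show $\tr_S(M)_{\mfrakq}\neq 0$ for every minimal prime $\mfrakq$ of $S$.

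Fix a minimal prime $\mfrakq$. Since $S$ is reduced, $S_\mfrakq\cong \Frac(S/\mfrakq)\eqqcolon K$ is a field, and $M_\mfrakq\cong \Omega^{d}_{K/\kk}$ because differentials and exterior powers commute with localization. Now $\trdeg_\kk K=\dim(S/\mfrakq)$. In the graded case this holds because $S/\mfrakq$ is a finitely generated graded domain over $\kk$. In the local case $S/\mfrakq$ is a localization of an affine domain at a maximal ideal, whose dimension equals the transcendence degree of its fraction field; if the localization is at a non-maximal prime, $\trdeg_\kk K$ is even larger than $\dim(S/\mfrakq)$, which only helps. Equidimensionality gives $\dim(S/\mfrakq)=d$, so $\trdeg_\kk K\ge d$. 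The inequality $\dim_K\Omega_{K/\kk}\ge \trdeg_\kk K$, as used in \autoref{ann0}, then gives $\dim_K\Omega_{K/\kk}\ge d$. Hence $\Omega^{d}_{K/\kk}$ is a nonzero $K$-vector space, and its trace over the field $K$ is all of $K$. So $\tr_S(M)_\mfrakq = K\neq 0$.

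Alternatively, one could apply \autoref{ann0} to the domain $S/\mfrakq$, but this would require comparing the traces of $\Omega^d_{S/\kk}$ and $\Omega^d_{(S/\mfrakq)/\kk}$, so I prefer the direct localization argument above.

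The main point needing care is the dimension bookkeeping in the local case: we need $\trdeg_\kk \Frac(S/\mfrakq)\ge d$ when $S$ is only essentially of finite type. I will handle it by writing $S=B_\mathfrak{P}$ with $B$ an affine $\kk$-algebra. Then $S/\mfrakq$ is a localization of the affine domain $B/\mathfrak{Q}$, where $\mathfrak{Q}$ is the prime of $B$ corresponding to $\mfrakq$, and $\dim(S/\mfrakq)=\HT(\mathfrak{P}/\mathfrak{Q})\le \dim(B/\mathfrak{Q})=\trdeg_\kk \Frac(S/\mfrakq)$. Equidimensionality says $\dim(S/\mfrakq)=d$, so this yields $\trdeg_\kk \Frac(S/\mfrakq)\ge d$, as required. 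Note that no perfectness of $\kk$ is needed, since only the lower bound on $\dim_K\Omega_{K/\kk}$ is used.
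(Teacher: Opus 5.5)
Your proof is correct and follows essentially the same route as the paper. You reduce to a minimal prime $\mathfrak{q}\subseteq\mathfrak{p}$, note that $S_{\mathfrak{q}}=K$ is a field because $S$ is reduced, and combine equidimensionality with $\dim_K\Omega_{K/\kk}\ge\operatorname{trdeg}_\kk K\ge d$ to get $\Omega^d_{K/\kk}\neq 0$. The only differences are that you do this computation directly rather than applying \autoref{ann0} to the domain $S/\mathfrak{q}$, and that in the local case you justify only the inequality $\operatorname{trdeg}_\kk K\ge\dim(S/\mathfrak{q})$. That inequality is the more accurate justification: the proof of \autoref{ann0} asserts equality, which can fail for localizations at non-maximal primes.
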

\begin{proof}
Let \(\mfrakp\) be a prime ideal of \(S\).
Choose a minimal prime $\mfrakq\in\Min(S)$ with $\mfrakq\subseteq \mfrakp$.
By transitivity of localization,
$(\tr_S(\Omega_{S/\kk}^{\dim(S)}))_{\mfrakq}=((\tr_S(\Omega_{S/\kk}^{\dim(S)}))_{\mfrakp})_{\mfrakq}$.
So it suffices to show that \((\tr_S(\Omega_{S/\kk})^{\dim(S)})_{\mfrakq}\) is non-zero.
Since $\mfrakq$ is minimal and $S$ is reduced, $S_{\mfrakq}$ is a
field. Set $K\defeq S_{\mfrakq}\cong \Frac(S/\mfrakq)$.
Then we have
\begin{equation*}
    (\tr_S(\Omega_{S/\kk}^{\dim(S)}))_{\mfrakq} = \tr_K(\Omega_{K/\kk}^{\dim(S)}) \cong \tr_{S/\mfrakq}(\Omega_{(S/\mfrakq)/kk}^{\dim(S)}) \otimes_{S/\mfrakq} \Frac(S/\mfrakq)
\end{equation*}
by \autoref{rem1} (3).
Since \(S\) is equidimensional, the equality \(\dim(S/\mfrakq) = \dim(S)\) holds and then applying \autoref{ann0} for \(S/\mfrakq\), we obtain \((\tr_S(\Omega_{S/\kk}^{\dim(S)}))_{\mfrakq} \neq 0\).
\end{proof}

\begin{theorem}\label{thm:verynice2}
Assume \(S\) is reduced, equidimensional, and contains \(\mathbb{Q}\).
Then 
$V\!\Bigl(\tr_S\!\bigl(\Omega^{\dim S}_{S/\kk}\bigr)\Bigr)= \Sing(S).$
\end{theorem}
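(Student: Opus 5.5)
We will prove the two inclusions separately, working prime by prime. Write $d\defeq\dim S$ and $I\defeq\tr_S(\Omega^{d}_{S/\kk})$. Since $\Omega^1_{S/\kk}$ is finitely presented, so is $\Omega^d_{S/\kk}$. Hence \autoref{rem1}~(3) gives $I S_\pp=\tr_{S_\pp}(\Omega^d_{S_\pp/\kk})$ for every $\pp\in\Spec(S)$. Thus $\pp\notin V(I)$ if and only if $\Omega^d_{S_\pp/\kk}$ has a free summand, and the theorem reduces to the local statement
\[
S_\pp \text{ is regular} \iff \tr_{S_\pp}\bigl(\Omega^{d}_{S_\pp/\kk}\bigr)=S_\pp .
\]
Note that $S_\pp$ is a reduced local $\kk$-algebra essentially of finite type in both cases of \autoref{setup4}.

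For $\pp\notin\Sing(S)$, the ring $S_\pp$ is regular, hence smooth over $\kk$ since $\kk$ has characteristic $0$ (\citeSta{038X}). So $\Omega^1_{S_\pp/\kk}$ is free of rank $\dim S_\pp+\operatorname{trdeg}_\kk\kappa(\pp)$. We want this rank to be at least $d$. Equidimensionality gives $\dim S_\pp+\dim S/\pp=d$, at least for graded $S$ or for $S$ local essentially of finite type, where the catenary/dimension formula holds. Also $\operatorname{trdeg}_\kk\kappa(\pp)\ge\dim S/\pp$; in the local case one uses the ambient finite type algebra to see this. Hence $\Omega^d_{S_\pp/\kk}$ is a nonzero exterior power of a free module of rank at least $d$, which is free, and therefore $\pp\notin V(I)$. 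This gives $V(I)\subseteq\Sing(S)$.

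Conversely, suppose $\pp\notin V(I)$, so that $\tr_{S_\pp}(\Omega^d_{S_\pp/\kk})=S_\pp$. Applying \autoref{prop:Yeahhhh}~(1) does not directly give regularity, because $d$ may exceed $\dim S_\pp$. So we cannot simply cite \autoref{thm:LocalEquivRegular}, which only concerns the exponent $\dim S_\pp$. Instead we will use the free summand more carefully. By \autoref{thm:EquivTrRank}~(2), applied with $\dim S_\pp$ replaced by whatever is needed, a free summand of $\Omega^{j}$ with $j\ge \dim S_\pp$ forces $\Omega^{\dim S_\pp}$ to have a free summand, by \autoref{prop:Yeahhhh}~(1). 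This holds provided $d\ge\dim S_\pp$, which is true. Then \autoref{thm:LocalEquivRegular}, whose hypotheses hold because $S_\pp$ is reduced, local, essentially of finite type over $\kk$ of characteristic $0$, shows that $S_\pp$ is regular. Hence $\Sing(S)\subseteq V(I)$.

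The main obstacle is the rank inequality $\dim S_\pp+\operatorname{trdeg}_\kk\kappa(\pp)\ge d$ in the first direction. This is exactly where equidimensionality enters: without it, a small component of dimension $<d$ through a smooth point would make $\Omega^d$ vanish locally. We would handle it by writing $S$, or the finite type model of $S$ in case (2), as a quotient of a finite type $\kk$-algebra whose minimal primes all have the same dimension. We would then invoke the dimension formula $\dim S_\pp+\operatorname{trdeg}_\kk\kappa(\pp)=\dim(\text{component})$ for affine domains, applied to the unique minimal prime of the regular (hence domain) ring $S_\pp$.
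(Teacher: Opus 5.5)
Your proposal is correct. For $\Sing(S)\subseteq V(I)$ your argument is the paper's: localize via \autoref{rem1}~(3), pass from exponent $d$ down to $\dim S_\pp$ with \autoref{prop:Yeahhhh}~(1), and apply \autoref{thm:LocalEquivRegular}. You should delete the remark that one ``cannot simply cite'' \autoref{thm:LocalEquivRegular}, and also the appeal to \autoref{thm:EquivTrRank}~(2). The downward step is exactly what makes that citation legitimate, and it is all you actually use.

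For $V(I)\subseteq\Sing(S)$ you take a different route. You compute the rank of the free module $\Omega^1_{S_\pp/\kk}$ as $\dim S_\pp+\operatorname{trdeg}_\kk\kappa(\pp)$. You then bound it below by $d$ using two facts:
\begin{itemize}
\item $\dim S_\pp+\dim S/\pp=d$, which follows from equidimensionality and catenarity and holds in both cases of \autoref{setup4};
\item $\operatorname{trdeg}_\kk\kappa(\pp)\ge\dim S/\pp$, obtained in the local case through the ambient finite type algebra.
\end{itemize}

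The paper never computes this rank. Since $(\Omega^d_{S/\kk})_\pp$ is an exterior power of a free module, it only needs $(\Omega^d_{S/\kk})_\pp\neq 0$. It gets this from \autoref{lem:ann1} by localizing further at a minimal prime $\mathfrak q\subseteq\pp$. There $S_{\mathfrak q}=K$ is a field and $\dim_K\Omega_{K/\kk}\ge\operatorname{trdeg}_\kk K\ge\dim S/\mathfrak q=d$ by \autoref{ann0}.

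Both arguments rest on the same fact: the generic rank of $\Omega^1$ on every irreducible component is at least $d$. The paper's version uses dimension theory only at minimal primes. It also yields the extra information that the top differential trace is nonzero at every prime. Your version pins down the exact rank at each smooth point and makes explicit where catenarity enters. It also states the local (essentially of finite type) case with the correct inequality $\operatorname{trdeg}\ge\dim$; \autoref{ann0} literally asserts an equality there, which only holds in the finitely generated case. The fallback in your last paragraph, taking the rank to be $\operatorname{trdeg}_\kk$ of the fraction field of the domain $S_\pp$, is essentially the paper's argument.
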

\begin{proof}
First we show that $V\!\Bigl(\tr_S\!\bigl(\Omega^{\dim S}_{S/\kk}\bigr)\Bigr) \supseteq \Sing(S)$.
Let $\mathfrak p\in \Spec(S)$.
Suppose that $\pp \notin V\!\Bigl(\tr_S\!\bigl(\Omega^{\dim S}_{S/\kk}\bigr)\Bigr)$,
that is, $\mathfrak p \not\supseteq \tr_S\!\bigl(\Omega^{\dim S}_{S/\kk}\bigr)$.
Then we have $\tr_{S_{ p}}\!\bigl(\Omega^{\dim S}_{S_{\mathfrak p}/\kk}\bigr)
=
\Bigl(\tr_S\!\bigl(\Omega^{\dim S}_{S/\kk}\bigr)\Bigr)_{\mathfrak p}
=
S_{\mathfrak p}$ by \autoref{rem1}~(3).
Thus we obtain
$\tr_{S_{\mathfrak p}}\!\bigl(\Omega^{\dim S_{\pp}}_{S_{\mathfrak p}/\kk}\bigr)
=
S_{\mathfrak p}$ by \autoref{prop:Yeahhhh}~(1) and \(\dim S_{\mfrakp} \leq \dim S\).
Note that $S_{\pp}$ is reduced local $\kk$-algebra essentially of finite type.
Therefore, $S_{\pp}$ is regular by \autoref{t1hm:EquivRegular1} and \autoref{thm:LocalEquivRegular},
so $\pp \notin \Sing(S)$.

Next we show that \(V(\tr_S(\Omega^{\dim(S)}_{S/\kk}))\subseteq \Sing(S)\).
Let \(\pp\in \Spec(S)\) and assume that $\pp \notin \Sing(S)$,
that is, \(S_{\pp}\) is regular.
By our setup (\autoref{setup4}), \(S_{\pp}\) is essentially of finite type over \(\kk\).
As \(\kk\) is perfect and the essentially of finite type \(\kk\)-algebra \(S_{\pp}\) is regular, then \(S_{\pp}\) is smooth over \(\kk\) (\citeSta{038X}).
Hence \(\Omega_{S_{\pp}/\kk}\cong (\Omega_{S/\kk})_{\pp}\) is a finite free \(S_{\pp}\)-module.
By \autoref{lem:ann1} and the equidimensional assumption on \(S\), we have \(\tr_S(\Omega^{\dim(S)}_{S/\kk})_{\pp}\neq 0\).
In particular,
\((\Omega^{\dim(S)}_{S/\kk})_{\pp}\neq 0\).
Thus \((\Omega^{\dim(S)}_{S/\kk})_{\pp}\cong \bigwedge^{\dim(S)}_{S_{\pp}}\Omega_{S_{\pp}/\kk}\) is a nonzero free \(S_{\pp}\)-module.
Then \( (\tr_S(\Omega^{\dim(S)}_{S/\kk}))_{\pp}=\tr_{S_\pp}((\Omega^{\dim(S)}_{S/\kk})_{\pp})=S_{\pp}\), and hence \(\pp\not\supseteq \tr_S(\Omega^{\dim(S)}_{S/\kk})\).
This shows that \(V(\tr_S(\Omega^{\dim(S)}_{S/\kk}))\subseteq \Sing(S)\).
\end{proof}

Recall that a local ring \(A\) admitting a canonical module \(\omega_A\) is called \emph{quasi-Gorenstein} if \(\omega_A \cong A\).

\begin{corollary}\label{cor:verynice2}
Assume \(S\) is reduced, equidimensional, and contains \(\mathbb{Q}\).
Then
$\sqrt{\tr_S(\Omega^{\dim(S)}_{S/\kk})} \subseteq \sqrt{\tr_S(\omega_S)}$.
\end{corollary}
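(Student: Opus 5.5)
The containment of radicals is equivalent to the containment of closed sets
\[
V\bigl(\tr_S(\omega_S)\bigr)\subseteq V\bigl(\tr_S(\Omega^{\dim S}_{S/\kk})\bigr)=\Sing(S),
\]
where the equality is \autoref{thm:verynice2}. So I will prove that every prime $\pp$ with $\pp\notin\Sing(S)$ satisfies $\pp\not\supseteq\tr_S(\omega_S)$. In other words, the non-Gorenstein locus, or more precisely the locus where $\omega$ fails to be trivial, is contained in the singular locus.

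Fix $\pp\in\Spec(S)$ with $S_\pp$ regular. The canonical module localizes: $(\omega_S)_\pp\cong\omega_{S_\pp}$. This holds because $S$ is a homomorphic image of a Gorenstein ring by \autoref{rem:canonical-module-exists}, and $S$ is equidimensional. Some care is needed here, since for non-Cohen--Macaulay rings localization of $\omega$ requires the height condition $\dim S_\pp+\dim S/\pp=\dim S$. That condition holds for equidimensional rings that are affine or local essentially of finite type, since these are catenary. In the graded case we also use that the graded canonical module, after forgetting the grading, agrees with the canonical module on localizations. Since $S_\pp$ is regular, hence Gorenstein, we get $\omega_{S_\pp}\cong S_\pp$.

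Next, $\omega_S$ is finitely generated, hence finitely presented since $S$ is Noetherian, so \autoref{rem1}~(3) gives
\[
\tr_S(\omega_S)_\pp=\tr_{S_\pp}(\omega_{S_\pp})=\tr_{S_\pp}(S_\pp)=S_\pp .
\]
Hence $\pp\not\supseteq\tr_S(\omega_S)$. This proves $V(\tr_S(\omega_S))\subseteq\Sing(S)=V(\tr_S(\Omega^{\dim S}_{S/\kk}))$. Taking radicals via the Nullstellensatz for ideals in a Noetherian ring ($V(I)\subseteq V(J)\iff\sqrt J\subseteq\sqrt I$) then gives the statement.

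The main obstacle is the localization of the canonical module, $(\omega_S)_\pp\cong\omega_{S_\pp}$, without a Cohen--Macaulay hypothesis. My first approach is to write $S$ as a quotient of a Gorenstein ring $G$ (a polynomial ring over $\kk$, or its localization) and define $\omega_S=\operatorname{Ext}^{\dim G-\dim S}_G(S,G)$. Ext commutes with localization, and $\dim G_\pp-\dim S_\pp=\dim G-\dim S$ by catenarity and equidimensionality. The codimension count then identifies $(\omega_S)_\pp$ with $\omega_{S_\pp}$.
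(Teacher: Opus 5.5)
Your argument is correct and is essentially the paper's. Both reduce, via \autoref{thm:verynice2}, to showing $V(\tr_S(\omega_S))\subseteq\Sing(S)$, i.e.\ that $\tr_S(\omega_S)_\pp=S_\pp$ whenever $S_\pp$ is regular. The paper cites Aoyama and Aoyama--Goto (and Kumashiro--Miyashita in the graded case) to identify $V(\tr_S(\omega_S))$ with the non-quasi-Gorenstein locus, whereas you prove directly only the half actually needed, by localizing $\omega_S\cong\operatorname{Ext}^{\dim G-\dim S}_G(S,G)$. One small precision: in the graded (affine, non-local) case, the height formula $\HT\pp+\dim S/\pp=\dim S$ follows from the dimension formula for affine domains over a field, not from catenarity alone.
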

\begin{proof}
Since \(S\) is equidimensional, we have
\[
V(\tr_S(\omega_S))
=
\{\pp \in \Spec(S) : S_{\pp} \text{ is not quasi-Gorenstein}\}
\]
by \cite[(1.7)]{aoyama1983some} and \cite[Corollary 3.4]{aoyama1985endomorphism}. We note that the same conclusion also holds in the graded case by \cite[Remark 2.9~(2), (5)]{KumashiroMiyashita2025}.
Thus, by \autoref{thm:verynice2}, we obtain
$
V(\tr_S(\omega_S))
\subseteq \Sing(S)
=
V\!\Bigl(\tr_S\!\bigl(\Omega^{\dim S}_{S/\kk}\bigr)\Bigr).
$
Thus the assertion follows.
\end{proof}

\begin{remark}\label{rem:equidimensionality-is-necessary}
The equidimensionality assumption in \autoref{thm:verynice2} cannot be omitted. For example, let
$S:=\QQ[x,y,z]/(xy,xz)$.
Then \(S\) is reduced
but it is not equidimensional
because \((xy,xz)=(x)\cap(y,z)\).
Moreover, we can check \(\dim S=2\).
A direct computation, or the fiber product formula proved later in \autoref{thm:fiber-top-trace-exact-formula}, gives
$\tr_S(\Omega_{S/\QQ}^{2})=(y,z)S$.
Hence
$V\!\left(\tr_S(\Omega_{S/\QQ}^{2})\right)=V(y,z)$.
However, the prime \((y,z)S\) is not singular, since \(S_{(y,z)}\cong \QQ(x)\) is a field. Thus \(V(\tr_S(\Omega_{S/\QQ}^{2}))\) is strictly larger than \(\Sing(S)\). In fact, \(\Sing(S)=V(x,y,z)\).

This kind of example can be produced systematically. Indeed, the fiber product formula \autoref{thm:tensor-product-exact-trace-formula} proved later shows that fiber products of graded rings with different dimensions typically give non-equidimensional rings for which the top differential trace detects more than the singular locus.
\end{remark}

\section{Nearly regular rings}\label{sect6}

In this section, motivated by \autoref{Theorem:OMEDETOU1} established in the preceding sections and by the perspective of \cite[Definition 2.2]{herzog2019trace}, we introduce the notion of nearly regular rings as a generalization of regular rings, and investigate their ring-theoretic properties.
Throughout this section, let \(S\) be a commutative ring satisfying \autoref{setup4}.

\begin{definition}
We say that $S$ is {\it nearly regular}
if
$\tr_S(\Omega_{S/ \kk}^{\dim(S)}) \supseteq \mm_S$.
\end{definition}

\begin{remark}\label{rem:OKOKOKOK}
Assume that $S$ is reduced and equidimensional,
and $\kk=S/\mm_S$ contains $\QQ$.
Then the following hold:
\begin{enumerate}[\rm (1)]
\item
$\sqrt{\tr_S(\Omega_{S/ \kk}^{\dim(S)})} \supseteq \mm_S$
if and only if
$S$ has an isolated singularity,
that is,
$S_\pp$ is regular for all $\pp \in \Spec(S) \setminus \{\mm_S\}$;
\item
If $S$ is nearly regular, then $S$ has an isolated singularity;
\item
$S$ is nearly regular if and only if $\tr_S(\Omega_{S/\kk}^i) \supseteq \mm_S$ for any $0\le i \le \dim(S)$.
\end{enumerate}
\end{remark}
\begin{proof}
(1) follows from \autoref{thm:verynice2}.
(2) follows from (1) and by definition.
(3) follows from \autoref{rem1}~(2).
\end{proof}

\begin{remark}\label{OK1}
Let $J=\bigl(\frac{\partial f_j}{\partial X_i}\bigr)$ be the Jacobian matrix of $S$.
Then we have $\tr_S(\Omega_{S/S_0})=I_1(X)$, where $X$ is a presentation matrix of $\ker(J^t)$.
\begin{proof}
Note that there is a free presentation
$S^{\oplus r} \xrightarrow{\bigl(\frac{\partial f_j}{\partial X_i}\bigr)} S^{\oplus m} \to \Omega_{S/S_0} \to 0$.
Hence the assertion follows from Vasconcelos' trick; see \cite[Remark 3.3]{vasconcelos1991computing} (or \cite[Proposition 3.1]{herzog2019trace}).
\end{proof}
\end{remark}

\begin{proposition}\label{kusa2}
Assume that \(S\) is a Noetherian \(\ZZ_{\ge 0}\)-graded ring.
Namely, there exist homogeneous elements $x_1,\dots,x_n \in S$ such that $S \cong S_0[x_1,\dots,x_n]$.
Moreover, assume that
$S$ contains $\QQ$
and
$\deg(x_i) \in \setZ$ is invertible in $S_0$ for every $i$.
Then we have $\tr_S(\Omega_{S/S_0}) \supseteq \mm_S$.
Moreover, if $\prk(S)=0$, then we have $\tr_S(\Omega_{S/S_0})=\mm_S$.
\end{proposition}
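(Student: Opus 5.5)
The plan is to exhibit one explicit linear form on $\Omega_{S/S_0}$, the one coming from the Euler derivation, whose image already contains every generator $x_i$ of $\mm_S$.

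Define $E\colon S\to S$ on homogeneous elements by $E(f)\defeq \deg(f)\,f$, and extend additively. First I check that $E$ is an $S_0$-linear derivation. For homogeneous $f,g$,
$$E(fg)=(\deg f+\deg g)fg=E(f)g+fE(g).$$
Moreover $E$ kills $S_0$, since $S_0$ is the degree-$0$ part. The universal property of $d\colon S\to\Omega_{S/S_0}$ then gives an $S$-linear map $\varphi\colon\Omega_{S/S_0}\to S$ with $\varphi\circ d=E$. This $\varphi$ is homogeneous of degree $0$ for the natural grading on $\Omega_{S/S_0}$. In particular
$$\varphi(dx_i)=\deg(x_i)\,x_i\in\tr_S(\Omega_{S/S_0}).$$
Since $\deg(x_i)$ is invertible in $S_0$ by hypothesis, each $x_i$ lies in $\tr_S(\Omega_{S/S_0})$.

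Next I identify $\mm_S$. Under \autoref{setup4}, $S_0=\kk$ is a field, so $\mm_S=S_+$. Because $S=S_0[x_1,\dots,x_n]$ with each $x_i$ homogeneous of positive degree, $S_+$ is generated by $x_1,\dots,x_n$. (If one wants to allow a local $S_0$, this step would additionally need the $\mm_0$-part, which the Euler derivation does not see; I would stay in the field case of \autoref{setup4}.) Hence $\mm_S=(x_1,\dots,x_n)\subseteq\tr_S(\Omega_{S/S_0})$, which is the first claim.

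For the second claim, assume $\prk(S)=0$. Since $S\supseteq\QQ$, \autoref{prop:Yeahhhh}~(3) shows that $\tr_S(\Omega_{S/S_0})\neq S$. By \autoref{rem:interestingfiniteness}, the trace is generated by images of homogeneous maps, so it is a proper graded ideal. Every proper graded ideal is contained in the unique graded maximal ideal $\mm_S$. Combined with the first part, this gives equality $\tr_S(\Omega_{S/S_0})=\mm_S$.

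The only delicate point is checking that $E$ is well defined as a derivation over $S_0$ and that invertibility of $\deg(x_i)$ is exactly what is needed to recover $x_i$ from $\varphi(dx_i)$. Both are routine once the Euler derivation is chosen, so I expect no real obstacle.
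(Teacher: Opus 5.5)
Your proof is correct and is essentially the paper's argument. The paper writes $S\cong S_0[X_1,\dots,X_n]/I$ and uses the Jacobian presentation of $\Omega_{S/S_0}$ with Vasconcelos' trick (\autoref{OK1}); applying the Euler formula to each generator of $I$ shows that $(\deg(x_j)x_j)_j$ lies in $\ker(J^t)\cong\Hom_S(\Omega_{S/S_0},S)$, and this is exactly the map $\varphi$ you get from the Euler derivation via the universal property. Your version is slightly cleaner, since it needs no presentation and makes the paper's reduction to $\bigcup_i\Supp(f_i)=\{1,\dots,n\}$ unnecessary; the second part goes through \autoref{prop:Yeahhhh}~(3) as in the paper, and because $\mm_S$ is maximal your graded-ideal step could even be skipped.
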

\begin{proof}
Note that \(\prk(S)\ge 1\)
if and only if \(\tr_S(\Omega_{S/S_0})=S\) by \autoref{prop:Yeahhhh}~(3).
Thus, it remains to consider the case where \(\prk(S)=0\) and to show that
\(\tr_S(\Omega_{S/S_0})=\mm_S\).

By assumption, there exist a polynomial ring $T=S_0[X_1,\cdots,X_n]$ in $n$ variables over $S_0$ and a homogeneous ideal $I \subseteq T$ such that $S \cong T/I$, where each $\deg(X_i) \in \setZ$ is invertible in $S_0$. Let $f_1,\cdots,f_r$ be a homogeneous minimal system of generators of $I$. Since $\prk(S)=0$, we may assume that $\bigcup_{1 \le i \le r}\Supp(f_i)={1,\cdots,n}$. By the classical Euler formula, for each $1 \le i \le r$ we have in $T$ that $\sum_{j \in \Supp(f_i)} \deg(X_j)X_j\frac{\partial f_i}{\partial X_j}=\deg(f_i)f_i$.
Hence, in $S=T/I$, we have
$\sum_{j \in \Supp(f_i)} \deg(x_j)x_j\overline{\frac{\partial f_i}{\partial X_j}}=0$,
where $\overline{\frac{\partial f_i}{\partial X_j}}$ denotes the residue class of $\frac{\partial f_i}{\partial X_j}$ in $S$.
In particular, by \autoref{OK1}, we have $x_j \in \tr_S(\Omega_{S/S_0})$ for every $j \in \Supp(f_i)$. Therefore, $\tr_S(\Omega_{S/S_0}) \supseteq \mm_S$.
Thus $\tr_S(\Omega_{S/S_0})=\mm_S$ by \autoref{prop:Yeahhhh}~(3).
\end{proof}

\begin{corollary}
Every one-dimensional \(\ZZ_{\ge 0}\)-graded Noetherian algebra over a field of characteristic zero with degree-zero part equal to the field is nearly regular.
\end{corollary}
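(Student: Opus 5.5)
The plan is to reduce to \autoref{kusa2} with \(S_0=\kk\). Let \(S\) be a one-dimensional Noetherian \(\ZZ_{\ge 0}\)-graded \(\kk\)-algebra with \(S_0=\kk\) and \(\operatorname{char}\kk=0\). We must show \(\tr_S(\Omega^{1}_{S/\kk})\supseteq \mm_S\), since \(\dim S=1\) and nearly regular means \(\tr_S(\Omega^{\dim S}_{S/\kk})\supseteq\mm_S\).

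First, because \(S\) is Noetherian and graded with \(S_0=\kk\), the irrelevant ideal \(S_+=\mm_S\) is finitely generated by homogeneous elements \(x_1,\dots,x_n\) of positive degree. Hence \(S=\kk[x_1,\dots,x_n]\) is a quotient of a graded polynomial ring over \(S_0=\kk\). Since \(\kk\) has characteristic zero, every \(\deg(x_i)\in\ZZ_{>0}\) is invertible in \(S_0=\kk\), and \(S\supseteq\QQ\). Thus all hypotheses of \autoref{kusa2} hold, and it gives \(\tr_S(\Omega_{S/\kk})\supseteq\mm_S\) directly. As \(\Omega^1_{S/\kk}=\Omega^{\dim S}_{S/\kk}\), this is precisely the nearly regular condition. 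Note that no reducedness or equidimensionality hypothesis is needed.

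The only point to check is that \autoref{kusa2} was stated for arbitrary \(S\), not just when \(\prk(S)=0\). Its proof first treats \(\prk(S)\ge 1\) through \autoref{prop:Yeahhhh}~(3), which gives trace equal to \(S\supseteq\mm_S\). In the case \(\prk(S)=0\), it uses the Euler relation \(\sum_j \deg(x_j)x_j\overline{\partial f_i/\partial X_j}=0\) together with Vasconcelos' trick (\autoref{OK1}) to place each \(x_j\) in the trace.

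The one subtle step, which I expect to be the main obstacle, is the reduction in that proof assuming that every variable occurs in some generator \(f_i\) when \(\prk(S)=0\). If a variable \(x_j\) appeared in no relation, then \(S\cong S'[x_j]\) would have positive polynomial rank. So either the union of the supports of the \(f_i\) is all of \(\{1,\dots,n\}\), or we are already in the \(\prk\ge1\) case. With this checked, the corollary follows immediately.
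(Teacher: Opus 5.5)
Your proposal is correct and follows the paper's route: the corollary is stated as an immediate consequence of \autoref{kusa2} with \(S_0=\kk\), where \(\dim S=1\) gives \(\Omega^{\dim S}_{S/\kk}=\Omega^1_{S/\kk}\) and characteristic zero makes every degree invertible. The step you flag as the main obstacle is not needed. The Euler vector \((\deg(x_1)x_1,\dots,\deg(x_n)x_n)\) lies in the kernel of the transposed Jacobian for any homogeneous generating set of the defining ideal, so every \(x_j\) is in the trace without any case distinction on \(\prk(S)\).
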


\begin{example}
For a weighted graded ring $W$ over $W_0=\ZZ$,
the inclusion $\tr_W(\Omega_{W/\ZZ}) \supseteq \mm_W$ does not hold in general. Indeed, let $W=\ZZ[t^2,t^3]$, where $\deg(t^2)=2$ and $\deg(t^3)=3$. Using the method of \autoref{OK1} together with a computation in \texttt{Macaulay2}~\cite{M2}, one obtains
$\tr_W(\Omega_{W/\ZZ})=(2t^2,t^4,t^3)$.
On the other hand, if the coefficient ring is $\setZ_p$ for an odd prime $p$ (including the case $p=3$), then
$\tr_W(\Omega_{W/\setZ_p})=\mm_W$.
\end{example}

In the remainder of this section, we investigate how nearly regular relates to other ring-theoretic operations.

\begin{remark}
The following hold:
\begin{enumerate}
\item It is known that nearly Gorenstein rings remain nearly Gorenstein after quotienting by a regular sequence
(\cite[Proposition 2.3]{herzog2019trace}).
In contrast, even a quotient of a regular ring by a non-zerodivisor need not be nearly regular.
For instance, \(T=\QQ[x,y,z]\) is regular, but
\(R=T/(x^3+y^3+z^3)\) is not nearly regular.
Indeed, using \autoref{OK1}, a computation in \texttt{Macaulay2} shows that
\(\tr_R(\Omega_{R/R_0}^2)=(x^2,y^2,z^2)R \neq \mm_R\);
\item It is known that every Veronese subring of a standard graded (nearly) Gorenstein ring of positive Krull dimension is nearly Gorenstein
(see \cite[Corollary 4.7]{herzog2019trace} and \cite[Corollary 3.7]{Miyashita2024LinearGeneralization}).
In contrast, a Veronese subring of a standard graded polynomial ring need not be nearly regular.
For example, let \(T=\QQ[x,y]\) be the standard graded polynomial ring, and let
\(R=\QQ[x^3,x^2y,xy^2,y^3]\) be its third Veronese subring.
Then, using \autoref{OK1}, a computation in \texttt{Macaulay2} shows that
\(\tr_R(\Omega_{R/R_0}^2)=\mm_R^2 \neq \mm_R\).
\end{enumerate}
\end{remark}

\subsection{The top differential trace ideal for tensor products}

In this subsection, we give an explicit formula
\autoref{thm:tensor-product-exact-trace-formula}
for the trace of the top exterior power of the module of differentials of tensor products over a field in the reduced equidimensional case. At the same time, we prove that, in this setting, the nearly regularity of the tensor product is equivalent to regularity.

\begin{remark}\label{rem:tensor-product-differentials}
Let \(T\) be a ring, and let \(A\) and \(B\) be \(T\)-algebras. Then there is a canonical isomorphism
\[
\Omega_{(A\otimes_T B)/T}\cong (\Omega_{A/T}\otimes_T B)\oplus (A\otimes_T \Omega_{B/T}).
\]
Indeed, this is the affine case of \cite[Exercise II.8.3(a)]{hartshorne2013algebraic}.
\end{remark}

\begin{theorem}\label{thm:tensor-product-exact-trace-formula}
Let \(\kk\) be a perfect field, and let \(A\) and \(B\) be reduced equidimensional \(\ZZ_{\ge 0}\)-graded Noetherian rings with \(A_0=B_0=\kk\). Assume that \(\dim A,\dim B>0\) and that \(R:=A\otimes_\kk B\) is reduced. Then
\[
\tr_R\!\left(\Omega_{R/\kk}^{\dim(R)}\right)
=
\tr_A\!\left(\Omega_{A/\kk}^{\dim(A)}\right)R\cdot
\tr_B\!\left(\Omega_{B/\kk}^{\dim(B)}\right)R
=
\tr_A(\Omega_{A/\kk}^{\dim(A)})R\cap \tr_B(\Omega_{B/\kk}^{\dim(B)})R.
\]
\end{theorem}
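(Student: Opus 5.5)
Set $a\defeq\dim A$, $b\defeq\dim B$, so that $\dim R=a+b$. Write $I\defeq \tr_A(\Omega^{a}_{A/\kk})$ and $J\defeq\tr_B(\Omega^{b}_{B/\kk})$.

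The first step is to decompose the top exterior power. By \autoref{rem:tensor-product-differentials},
\[
\Omega^{a+b}_{R/\kk}\cong\bigoplus_{i+j=a+b}\bigl(\Omega^i_{A/\kk}\otimes_\kk\Omega^j_{B/\kk}\bigr).
\]
The trace of a direct sum is the sum of the traces, so it suffices to compute $\tr_R(\Omega^i_{A/\kk}\otimes_\kk\Omega^j_{B/\kk})$ for each pair $(i,j)$. If $i>a$, then by \autoref{prop:3.8} we have $\tr_A(\Omega^i_{A/\kk})=0$. Indeed, the proof there shows that some nonzerodivisor $r\in A$ kills $\Omega^i_{A/\kk}$. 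Since $R$ is flat over $A$, the element $r\otimes1$ is still a nonzerodivisor on $R$, and it kills $\Omega^i_{A/\kk}\otimes_\kk\Omega^j_{B/\kk}$. Hence that summand has zero trace. The case $j>b$ is symmetric, so only the summand with $(i,j)=(a,b)$ survives. The problem is therefore to show
\[
\tr_R(M\otimes_\kk N)=IR\cdot JR\qquad\text{for } M=\Omega^a_{A/\kk},\ N=\Omega^b_{B/\kk}.
\]

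The inclusion $\supseteq$ is easy. Given $\phi\colon M\to A$ and $\psi\colon N\to B$, the map $\phi\otimes\psi$ is a homomorphism $M\otimes_\kk N\to A\otimes_\kk B=R$ with image $\phi(M)\psi(N)R$. Summing over all such pairs gives $IJR$.

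For $\subseteq$ I would use that $M$ and $N$ are finitely presented, together with flat base change of $\Hom$ over the field $\kk$. Choose presentations $A^p\to A^q\to M\to0$ and $B^s\to B^t\to N\to0$. Then $\Hom_R(M\otimes_\kk N,R)$ is the kernel of the corresponding map built from the tensor product of the two presentations. The key claim is that
\[
\Hom_A(M,A)\otimes_\kk\Hom_B(N,B)\longrightarrow\Hom_R(M\otimes_\kk N,R)
\]
is an isomorphism. Since $\otimes_\kk$ is exact, the kernel of $\alpha\otimes1$ and $1\otimes\beta$ on $V\otimes W$ is $\ker\alpha\otimes\ker\beta$, and that is exactly what the claim needs. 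Granting this, every $R$-linear map $M\otimes_\kk N\to R$ is a finite sum of maps $\phi\otimes\psi$, so the trace equals $IJR$. I expect this step to be the main obstacle. The $\Hom$ comparison for infinitely generated $\kk$-vector spaces needs care, and I would argue degreewise with graded Hom, using \autoref{rem:interestingfiniteness}: each graded piece of ${}^*\Hom$ is handled via the finite presentations.

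It remains to prove $IR\cdot JR=IR\cap JR$. For a $\kk$-subspace $I\subseteq A$ and $J\subseteq B$, one has $IR=I\otimes_\kk B$ and $JR=A\otimes_\kk J$. Choose a $\kk$-basis of $A$ extending a basis of $I$, and similarly for $B$ and $J$. In the resulting tensor basis of $R$, the subspace $I\otimes B$ is spanned by products with first factor in the basis of $I$, and $A\otimes J$ by products with second factor in the basis of $J$. Their intersection is therefore spanned by products lying in both, which gives $(I\otimes B)\cap(A\otimes J)=I\otimes J=IJR$. The hypotheses that $\dim A,\dim B>0$ and that $R$ is reduced are used only in the vanishing step, where \autoref{prop:3.8} is applied to $A$ and $B$.
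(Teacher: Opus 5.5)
Your proposal is correct and follows the paper's proof. The paper uses the same decomposition of $\Omega^{\dim R}_{R/\kk}$ into the summands $\Omega^i_{A/\kk}\otimes_\kk\Omega^j_{B/\kk}$, kills every summand except $(i,j)=(\dim A,\dim B)$ via \autoref{prop:3.8}, and then simply cites \cite[Proposition~4.1(ii)]{herzog2019trace} for the surviving summand (together with \cite[Proposition~1.3]{herzog2019trace} and flat base change of traces for the vanishing), where you reprove these facts directly. The Hom comparison you flag as the main obstacle is not one: exactness of $-\otimes_\kk W$ and the identity $(V'\otimes_\kk W)\cap(V\otimes_\kk W')=V'\otimes_\kk W'$ hold for arbitrary vector spaces, so your kernel argument is already complete without passing to graded pieces.
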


\begin{proof}
Set \(d_A:=\dim A\), \(d_B:=\dim B\), and \(d:=\dim R\). Note that \(d=d_A+d_B\) by \cite[Theorem 3.5.7]{BH} and \cite[Theorem (2.2.5)]{goto1978graded}. Set
\(I_A:=\tr_A(\Omega_{A/\kk}^{d_A})\) and \(I_B:=\tr_B(\Omega_{B/\kk}^{d_B})\).
Also put \(M_A:=\Omega_{A/\kk}\otimes_A R\) and \(M_B:=\Omega_{B/\kk}\otimes_B R\).
By \autoref{rem:tensor-product-differentials}, one has \(\Omega_{R/\kk}\cong M_A\oplus M_B\). Hence
$\Omega_{R/\kk}^d\cong \bigoplus_{i=0}^d X_i$,
where
$X_i:=\bigwedge_R^{d-i}M_A\otimes_R \bigwedge_R^iM_B$.
Therefore
$\tr_R(\Omega_{R/\kk}^d)=\sum_{i=0}^d \tr_R(X_i)$.
We first show that \(\tr_R(X_i)=0\) for all \(i\neq d_B\). By \cite[Proposition~1.3]{herzog2019trace},
we have
$\tr_R(X_i)\subseteq \tr_R\!\left(\bigwedge_R^{d-i}M_A\right)\cap \tr_R\!\left(\bigwedge_R^iM_B\right)$.

Assume first that \(i<d_B\). Then \(d-i=d_A+d_B-i>d_A=\dim A\). Since
\(\bigwedge_R^{d-i}M_A\cong \Omega_{A/\kk}^{d-i}\otimes_A R\),
\autoref{prop:3.8} yields
$\tr_A(\Omega_{A/\kk}^{d-i})=0$.
Hence \(\tr_R(\bigwedge_R^{d-i}M_A)=0\) by \cite[Proposition 2.8~(viii)]{lindo2017trace}, and therefore \(\tr_R(X_i)=0\).

Similarly, if \(i>d_B\), then \(\tr_R(\bigwedge_R^iM_B)=0\), and hence \(\tr_R(X_i)=0\).

Consequently, we obtain
\(\tr_R(\Omega_{R/\kk}^d)=\tr_R(X_{d_B})\).
Now \(X_{d_B}\cong (\Omega_{A/\kk}^{d_A}\otimes_A R)\otimes_R (\Omega_{B/\kk}^{d_B}\otimes_B R)\cong \Omega_{A/\kk}^{d_A}\otimes_\kk \Omega_{B/\kk}^{d_B}\). Since \(\Omega_{A/\kk}^{d_A}\) and \(\Omega_{B/\kk}^{d_B}\) are finitely presented, \cite[Proposition~4.1(ii)]{herzog2019trace} yields
\[
\tr_R(X_{d_B})
=
\tr_A(\Omega_{A/\kk}^{d_A})R\cdot \tr_B(\Omega_{B/\kk}^{d_B})R
=
\tr_A(\Omega_{A/\kk}^{d_A})R\cap \tr_B(\Omega_{B/\kk}^{d_B})R.
\]
Since \(\tr_R(\Omega_{R/\kk}^d)=\tr_R(X_{d_B})\), the proof is complete.
\end{proof}

\begin{corollary}[{cf. \cite[Corollary 4.3]{herzog2019trace}}]\label{cor:tensor-product-nearly-regular}
Under the same assumptions as in \autoref{thm:tensor-product-exact-trace-formula}, assume in addition that \(\kk\) has characteristic \(0\). Then the following are equivalent:
\begin{enumerate}[\rm(1)]
\item \(R\) is nearly regular;
\item \(R\) is regular;
\item \(A\) and \(B\) are regular.
\end{enumerate}
\end{corollary}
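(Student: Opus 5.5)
We will prove the implications (3)$\Rightarrow$(2)$\Rightarrow$(1)$\Rightarrow$(3), using \autoref{thm:tensor-product-exact-trace-formula} to pass between $R$ and its two factors. Write $I_A\defeq \tr_A(\Omega^{\dim A}_{A/\kk})$ and $I_B\defeq \tr_B(\Omega^{\dim B}_{B/\kk})$.

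For (3)$\Rightarrow$(2): suppose $A$ and $B$ are regular. By \autoref{t1hm:EquivRegular1}~(1) we get $I_A=A$ and $I_B=B$. Then \autoref{thm:tensor-product-exact-trace-formula} gives $\tr_R(\Omega^{\dim R}_{R/\kk})=R$. Since $R$ is reduced and Noetherian graded with $R_0=\kk$ of characteristic $0$, \autoref{t1hm:EquivRegular1}~(1) applied to $R$ shows that $R$ is regular. (Alternatively, $A$ and $B$ are graded polynomial rings over $\kk$, so $R$ is one too.)

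For (2)$\Rightarrow$(1): if $R$ is regular, then \autoref{t1hm:EquivRegular1}~(1) gives $\tr_R(\Omega^{\dim R}_{R/\kk})=R\supseteq\mm_R$.

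The main step is (1)$\Rightarrow$(3). Assume $I_AR\cap I_BR\supseteq \mm_R$. We claim $I_A=A$. Suppose instead $I_A\neq A$. Then $I_A$ is a proper graded ideal, so $I_A\subseteq \mm_A=A_+$. Since $\dim B>0$, there is a homogeneous element $b\in B_+$, $b\neq 0$; then $1\otimes b\in \mm_R$, hence $1\otimes b\in I_AR$. Now $I_AR=I_A\otimes_\kk B$ because $\kk$ is a field and everything is flat. Choose a $\kk$-basis of $A$ extending a basis of $I_A$; since $1\notin I_A$, we may take $1$ as a basis element outside $I_A$. Then $A\otimes_\kk B=\bigoplus(\text{basis vector})\otimes B$, and the element $1\otimes b$ has nonzero component on $1\otimes B$. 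Thus $1\otimes b$ does not lie in $I_A\otimes_\kk B$, a contradiction. Hence $I_A=A$, and symmetrically $I_B=B$ using $\dim A>0$. Applying \autoref{t1hm:EquivRegular1}~(1) to the reduced rings $A$ and $B$ (characteristic $0$) shows both are regular.

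The key points are the flatness identity $I_AR=I_A\otimes_\kk B$ and the hypothesis that both dimensions are positive, which supplies the nonzero homogeneous element $b$ (respectively its counterpart in $A_+$) needed for the contradiction.
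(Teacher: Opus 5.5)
Your proposal is correct and follows essentially the same route as the paper's proof. It gets (3)$\Rightarrow$(2) from \autoref{t1hm:EquivRegular1} together with the formula of \autoref{thm:tensor-product-exact-trace-formula}, and (1)$\Rightarrow$(3) by noting that $I_A\neq A$ would force $\mathfrak m_B R$ into $I_A R\subseteq \mathfrak m_A R$. Your basis argument with $1\otimes b$ just supplies a proof of the fact $\mathfrak m_B R\nsubseteq \mathfrak m_A R$, which the paper states without justification.
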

\begin{proof}
By \autoref{t1hm:EquivRegular1} and \autoref{thm:tensor-product-exact-trace-formula}, \rm(3) implies \rm(2). Moreover, \rm(2) clearly implies \rm(1). Therefore, it remains to show that \rm(1) implies \rm(3).

Set \(I_A:=\tr_A(\Omega_{A/\kk}^{\dim A})\) and \(I_B:=\tr_B(\Omega_{B/\kk}^{\dim B})\). By \autoref{thm:tensor-product-exact-trace-formula},
we have
$\tr_R(\Omega_{R/\kk}^{\dim R})=I_A R\cdot I_B R$.

Assume that \(R\) is nearly regular. Then \(\mathfrak m_R\subseteq I_A R\cdot I_B R\). If \(I_A\neq A\), then \(I_A\subseteq \mathfrak m_A\), since \(I_A\) is a proper homogeneous ideal of the \(\ZZ_{\ge 0}\)-graded ring \(A\) with \(A_0=\kk\). Hence \(I_A R\cdot I_B R\subseteq \mathfrak m_A R\), and so \(\mathfrak m_R\subseteq \mathfrak m_A R\). This is impossible, because \(\mathfrak m_R=\mathfrak m_A R+\mathfrak m_B R\) and \(\mathfrak m_B R\nsubseteq \mathfrak m_A R\). Thus \(I_A=A\). By symmetry, \(I_B=B\).

Therefore \(\tr_A(\Omega_{A/\kk}^{\dim A})=A\) and \(\tr_B(\Omega_{B/\kk}^{\dim B})=B\). 
Thus both \(A\) and \(B\) are regular by \autoref{t1hm:EquivRegular1}.
\end{proof}

\begin{corollary}[{cf. \cite[Corollary 4.4]{herzog2019trace}}]\label{cor:polynomial-extension-nearly-regular}
Let \(\kk\) be a field of characteristic \(0\), let \(R\) be a reduced equidimensional \(\ZZ_{\ge 0}\)-graded Noetherian ring with \(R_0=\kk\), and put \(S:=R[x_1,\dots,x_n]\) for some \(n\ge 1\), where each \(x_i\) is homogeneous of positive degree. Then \(S\) is nearly regular if and only if \(R\) is regular.
\end{corollary}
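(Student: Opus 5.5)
The plan is to reduce to \autoref{cor:tensor-product-nearly-regular} by writing \(S\cong R\otimes_\kk B\) with \(B:=\kk[x_1,\dots,x_n]\), graded so that \(\deg x_i>0\). Then \(B\) is a reduced, equidimensional, regular \(\ZZ_{\ge 0}\)-graded Noetherian domain with \(B_0=\kk\) and \(\dim B=n>0\). Moreover \(S=R[x_1,\dots,x_n]\) is reduced, since a polynomial ring over a reduced ring is reduced. To apply \autoref{thm:tensor-product-exact-trace-formula} and its corollary we also need \(\dim R>0\), so we first dispose of the case \(\dim R=0\).

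If \(\dim R=0\), then \(R\) is a reduced Artinian graded ring with \(R_0=\kk\). Its unique graded maximal ideal \(R_+\) is nilpotent, hence zero, so \(R=\kk\). Thus \(R\) is regular, and \(S=\kk[x_1,\dots,x_n]\) is regular and in particular nearly regular. Both sides of the equivalence therefore hold.

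Now assume \(\dim R>0\). All hypotheses of \autoref{cor:tensor-product-nearly-regular} are satisfied for \(A=R\) and \(B\). It gives: \(S\) is nearly regular if and only if \(R\) and \(B\) are both regular. Since \(B\) is a polynomial ring over a field, it is regular, so the condition reduces to the regularity of \(R\).

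The only delicate points are bookkeeping. First, \(\dim R>0\) must be ensured, which is why the zero-dimensional case is handled separately. Second, the grading on \(S\) must be the tensor-product grading. This holds because the grading of the polynomial extension is compatible with those of \(R\) and \(B\) by definition.

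Alternatively, one can argue directly. By \autoref{thm:tensor-product-exact-trace-formula},
\[
\tr_S\bigl(\Omega^{\dim S}_{S/\kk}\bigr)=\tr_R\bigl(\Omega^{\dim R}_{R/\kk}\bigr)S,
\]
since \(\tr_B(\Omega^n_{B/\kk})=B\). If \(\tr_R(\Omega^{\dim R}_{R/\kk})\ne R\), it lies in \(\mm_R\), so its extension lies in \(\mm_RS\). But \(\mm_RS\) does not contain \(x_1\), so \(S\) would not be nearly regular. Hence near regularity of \(S\) forces \(\tr_R(\Omega^{\dim R}_{R/\kk})=R\), which is equivalent to the regularity of \(R\) by \autoref{t1hm:EquivRegular1}.
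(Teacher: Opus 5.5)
Your proof is correct and follows the paper's argument. You dispose of the case \(\dim R=0\), where \(R=\kk\), and otherwise apply \autoref{cor:tensor-product-nearly-regular} to \(S\cong R\otimes_\kk \kk[x_1,\dots,x_n]\). Your explicit checks that \(S\) is reduced and that \(\dim R>0\), and your alternative direct argument via \autoref{thm:tensor-product-exact-trace-formula}, spell out details the paper leaves implicit.
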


\begin{proof}
If \(\dim R=0\), then \(R=\kk\), since \(R\) is reduced and \(R_0=\kk\). Hence \(S\) is a polynomial ring over \(\kk\), and the assertion is clear.

Since \(S\cong R\otimes_\kk \kk[x_1,\dots,x_n]\),
the assertion follows from \autoref{cor:tensor-product-nearly-regular}.
\end{proof}

\subsection{The top differential trace ideal for fiber products}

The purpose of this subsection is to prove an explicit formula for the trace of the top differential module of a fiber product arising from reduced equidimensional graded rings, and to characterize its nearly regularity. As a consequence, we characterize the nearly regularity of Stanley--Reisner rings, a well-known class in combinatorial commutative algebra, in terms of the corresponding simplicial complexes.

Throughout this subsection, let \(\kk\) be a perfect field, and let \(A\) and \(B\) be \(\mathbb Z_{\ge 0}\)-graded Noetherian rings with \(A_0=B_0=\kk\). We write \(\mathfrak m_A:=A_{>0}\) and \(\mathfrak m_B:=B_{>0}\). Let
\[
\varepsilon_A:A\twoheadrightarrow A/\mathfrak m_A\cong \kk
\quad\text{and}\quad
\varepsilon_B:B\twoheadrightarrow B/\mathfrak m_B\cong \kk
\]
be the canonical graded surjections, and set
\[
R:=A\times_{\kk} B:=\{(a,b)\in A\times B \mid \varepsilon_A(a)=\varepsilon_B(b)\}.
\]
We denote by \(\pi_A:R\twoheadrightarrow A\) and \(\pi_B:R\twoheadrightarrow B\) the canonical projections, and by \(\iota_A:A\to R\), \(a\mapsto (a,\varepsilon_A(a))\), and \(\iota_B:B\to R\), \(b\mapsto (\varepsilon_B(b),b)\), the canonical sections. We also set \(K_A:=\ker(\pi_A)=\mathfrak m_B R\) and \(K_B:=\ker(\pi_B)=\mathfrak m_A R\). Note that \(\mathfrak m_R=R_{>0}=K_A\oplus K_B=\mathfrak m_A R\oplus \mathfrak m_B R\), that \(K_AK_B=0\), and that \(\dim R=\max\{\dim A,\dim B\}\).

\begin{proposition}\label{prop:fiber-differentials}
There is an isomorphism
\[
\Omega_{R/\kk}\cong \bigl((\Omega_{A/\kk}\otimes_A R)\oplus (\Omega_{B/\kk}\otimes_B R)\bigr)/J,
\]
where \(J\) is the \(R\)-submodule generated by the elements \(b\,da+a\,db\) with \(a\in \mathfrak m_A\) and \(b\in \mathfrak m_B\).
\end{proposition}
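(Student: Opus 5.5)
Our plan is to compare the universal derivations directly, building explicit maps in both directions and checking that they are mutually inverse. Set $M:=(\Omega_{A/\kk}\otimes_A R)\oplus(\Omega_{B/\kk}\otimes_B R)$, where $R$ is an $A$-algebra via $\iota_A$ and a $B$-algebra via $\iota_B$. Every element of $R$ can be written uniquely as $(c,c)+(a,0)+(0,b)$ with $c\in\kk$, $a\in\mathfrak m_A$, $b\in\mathfrak m_B$. Equivalently, $r=c+\iota_A(a)+\iota_B(b)$ with the same triple, since $\iota_A(a)=(a,0)$ for $a\in\mathfrak m_A$. We define
\[
\delta\colon R\to M/J,\qquad \delta(r):=\overline{d_A a\otimes 1+d_B b\otimes 1}.
\]
This is well defined and $\kk$-linear by uniqueness of the decomposition.

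The key step is to show that $\delta$ is a derivation; this is where the relations $J$ arise. Write $r=c+a+b$ and $r'=c'+a'+b'$, identifying $A$ and $B$ with their images in $R$. Then
\[
rr'=cc'+(ca'+c'a+aa')+(cb'+c'b+bb')+(ab'+a'b).
\]
The cross term $ab'+a'b$ vanishes in $R$, because $\mathfrak m_A R\cdot\mathfrak m_B R=K_BK_A=0$. On the other hand, $r\,\delta(r')+r'\,\delta(r)$ contains the extra terms
\[
b\,d a'+a\,d b'+b'\,d a+a'\,d b.
\]
To see this, note that $\mathfrak m_A$ acts on $\Omega_{A}\otimes R$ via $R$. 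The action of $b\in\mathfrak m_B\subset R$ on $\Omega_A\otimes_A R$ is nonzero in general. Moreover, $a\,d_A a'$ computed in $\Omega_A\otimes R$ agrees with $a$ acting through $R$. Hence the extra terms are exactly $(b\,da'+a'\,db)+(b'\,da+a\,db')$, which lie in $J$. So $\delta$ satisfies Leibniz modulo $J$, and it induces an $R$-linear map $\phi\colon\Omega_{R/\kk}\to M/J$.

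Conversely, $d_R\circ\iota_A\colon A\to\Omega_{R/\kk}$ is a $\kk$-derivation, where $\Omega_{R/\kk}$ is an $A$-module via $\iota_A$. It therefore induces an $A$-linear map $\Omega_{A/\kk}\to\Omega_{R/\kk}$, and hence an $R$-linear map $\Omega_{A/\kk}\otimes_A R\to\Omega_{R/\kk}$. We build the analogous map for $B$ and sum the two to get $\psi\colon M\to\Omega_{R/\kk}$. For $a\in\mathfrak m_A$ and $b\in\mathfrak m_B$ we have $ab=0$ in $R$, so
\[
0=d_R(ab)=b\,d_R a+a\,d_R b=\psi(b\,da+a\,db).
\]
Thus $\psi$ kills $J$ and factors through $M/J$.

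It remains to check that $\phi$ and $\psi$ are inverse. It suffices to verify this on the generators $d_R r$ and $da\otimes 1$, $db\otimes1$, which follows directly from the definitions since $d_R c=0$ for $c\in\kk$. The main obstacle is the bookkeeping in the Leibniz verification: one must track the two $R$-module structures on each summand and make sure that the leftover terms are precisely elements of the form $b\,da+a\,db$ and nothing more. We would organise this carefully by splitting into the cases $r\in\kk$, $r\in\iota_A(\mathfrak m_A)$ and $r\in\iota_B(\mathfrak m_B)$ and using bilinearity.
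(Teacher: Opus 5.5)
Your argument is correct, but it takes a different route from the paper.

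\textbf{The paper's route.} It never writes down a derivation on $R$. It identifies $R$ with $P/I$, where $P=A\otimes_\kk B$ and $I=\mathfrak m_A\otimes_\kk\mathfrak m_B$, using $A=\kk\oplus\mathfrak m_A$ and $B=\kk\oplus\mathfrak m_B$. It then applies the conormal sequence $I/I^2\to\Omega_{P/\kk}\otimes_P R\to\Omega_{R/\kk}\to 0$. The middle term is rewritten as $(\Omega_{A/\kk}\otimes_A R)\oplus(\Omega_{B/\kk}\otimes_B R)$ via the tensor-product formula for differentials. Finally, $J$ is read off as the image of the generators $a\otimes b$ of $I$, since $d(a\otimes b)$ corresponds to $b\,da+a\,db$.

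\textbf{Your route.} Your construction is essentially this argument unwound by hand. The Leibniz defect of $\delta$ is exactly the contribution of the cross terms $ab'+a'b$ that die in $R$, i.e.\ of $I$. Your $\psi$ plays the role of the middle map of the conormal sequence, and the identity $d_R(ab)=0$ is the statement that the sequence is a complex.

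\textbf{What each buys.} The paper's version is shorter and explains conceptually why $J$ is generated by precisely those elements. Yours is self-contained: it avoids identifying $R$ with $P/I$ and $\Omega_{P/\kk}\otimes_P R$ with your $M$, and it produces explicit mutually inverse maps.

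\textbf{Two small points to tidy.}
\begin{enumerate}
\item The prose around the Leibniz step, about which ring acts on which summand, is muddled, even though the displayed computation is right.
\item When checking that $\phi$ composed with the induced map $M/J\to\Omega_{R/\kk}$ is the identity on $d_Aa\otimes 1$ with $a\notin\mathfrak m_A$, use $\iota_A(a)=\varepsilon_A(a)+\iota_A(a-\varepsilon_A(a))$. Alternatively, simply note that $\Omega_{A/\kk}$ is generated by the $d_Aa$ with $a\in\mathfrak m_A$.
\end{enumerate}
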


\begin{proof}
Set \(P:=A\otimes_{\kk} B\) and \(I:=\mathfrak m_A\otimes_{\kk}\mathfrak m_B\). Since \(A=\kk\oplus \mathfrak m_A\) and \(B=\kk\oplus \mathfrak m_B\) as \(\kk\)-vector spaces, we have \(P\cong \kk\oplus \mathfrak m_A\oplus \mathfrak m_B\oplus (\mathfrak m_A\otimes_{\kk}\mathfrak m_B)\), and hence \(R\cong P/I\). Applying the conormal exact sequence to \(P\twoheadrightarrow R=P/I\), we obtain
\[
I/I^2 \longrightarrow \Omega_{P/\kk}\otimes_P R \longrightarrow \Omega_{R/\kk}\longrightarrow 0.
\]
By \autoref{rem:tensor-product-differentials}, \(\Omega_{P/\kk}\cong (\Omega_{A/\kk}\otimes_{\kk} B)\oplus (A\otimes_{\kk}\Omega_{B/\kk})\), so \(\Omega_{P/\kk}\otimes_P R\cong (\Omega_{A/\kk}\otimes_A R)\oplus (\Omega_{B/\kk}\otimes_B R)\). Thus \(\Omega_{R/\kk}\cong \bigl((\Omega_{A/\kk}\otimes_A R)\oplus (\Omega_{B/\kk}\otimes_B R)\bigr)/\operatorname{Im}(\delta)\), where \(\delta:I/I^2\to \Omega_{P/\kk}\otimes_P R\) is the first map of the conormal sequence. For \(a\in \mathfrak m_A\) and \(b\in \mathfrak m_B\), the class of \(a\otimes b\in I\) is sent to \(d(a\otimes b)\otimes 1\), and \(d(a\otimes b)=da\otimes b+a\otimes db\), which corresponds to \(b\,da+a\,db\). Since \(I=\mathfrak m_A\otimes_{\kk}\mathfrak m_B\) is generated by the elements \(a\otimes b\), the assertion follows.
\end{proof}

\begin{definition}[{cf. \cite[Definition 3.9]{KumashiroMiyashita2025}}]\label{def:dagger-ideal}
Let \(S\) be a \(\mathbb Z_{\ge 0}\)-graded ring with \(S_0=\kk\), and let \(\mathfrak m_S:=S_{>0}\). For an ideal \(I\subseteq S\), we define
\[
I^\dagger:=
\begin{cases}
I & \text{if } I\neq S,\\
\mathfrak m_S & \text{if } I=S.
\end{cases}
\]
\end{definition}

\begin{lemma}\label{lem:maps-into-maximal-ideal}
Let \(S\) be a \(\mathbb Z_{\ge 0}\)-graded Noetherian ring with \(S_0=\kk\), let \(\mathfrak m_S:=S_{>0}\), and let \(M\) be a finitely generated graded \(S\)-module. Assume that \(\mathfrak m_S\subseteq \tr_S(M)\). Then every \(x\in\mathfrak m_S\) can be written as
$x=\sum_{\ell=1}^n \varphi_\ell(m_\ell)$
for some \(S\)-linear maps \(\varphi_\ell:M\to S\) and elements \(m_\ell\in M\), with
$\varphi_\ell(M)\subseteq \mathfrak m_S$
for all \(\ell\).
\end{lemma}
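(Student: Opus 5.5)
Since \(\mathfrak m_S\subseteq \tr_S(M)\), any \(x\in\mathfrak m_S\) can be written as \(x=\sum_{\ell}\psi_\ell(m_\ell)\) with \(\psi_\ell\in\Hom_S(M,S)\) and \(m_\ell\in M\). The plan is to modify each \(\psi_\ell\) so that its image lands in \(\mathfrak m_S\), while keeping the sum equal to \(x\). Two cases arise.

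\textbf{Case \(\tr_S(M)\neq S\).} Then \(\tr_S(M)\) is a proper homogeneous ideal by \autoref{rem:interestingfiniteness}. Since \(S_0=\kk\), every proper homogeneous ideal is contained in \(\mathfrak m_S\). Hence every \(\varphi\in\Hom_S(M,S)\) already satisfies \(\varphi(M)\subseteq\tr_S(M)\subseteq\mathfrak m_S\), and any representation of \(x\) works unchanged.

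\textbf{Case \(\tr_S(M)=S\).} The idea is to multiply the maps by elements of \(\mathfrak m_S\). Since \(S\) is Noetherian, write \(\mathfrak m_S=(y_1,\dots,y_n)\) with \(y_\ell\) homogeneous of positive degree, so that \(x=\sum_\ell s_\ell y_\ell\) for some \(s_\ell\in S\). Because \(\tr_S(M)=S\), \autoref{rem1}~(4) gives a surjection \(\pi\colon M\to S\); choose \(m\in M\) with \(\pi(m)=1\). Set \(\varphi_\ell\defeq y_\ell\pi\) and \(m_\ell\defeq s_\ell m\). Then \(\varphi_\ell(M)=y_\ell S\subseteq\mathfrak m_S\), and
\[
\sum_\ell\varphi_\ell(m_\ell)=\sum_\ell y_\ell s_\ell=x.
\]
Even more simply, one can take the single map \(\varphi=x\pi\) and element \(m\), provided \(x\in\mathfrak m_S\).

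The only point needing care is the first case: the claim that a proper trace ideal lies in \(\mathfrak m_S\). This relies on the trace being generated by images of graded homomorphisms (\autoref{rem:interestingfiniteness}), which makes it homogeneous. For a finitely generated graded \(M\) this holds, and a homogeneous ideal containing no unit has trivial degree-\(0\) part, so it lies in \(S_{>0}=\mathfrak m_S\). The rest is routine.
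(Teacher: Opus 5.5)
Your proof is correct and follows the same two-case structure as the paper. When \(\tr_S(M)\neq S\), every map already has image in \(\mathfrak m_S\); when \(\tr_S(M)=S\), one multiplies maps by \(x\). Your single-map variant \(x\pi\) is the paper's \(x\psi_\ell\), with the sum \(1=\sum_\ell\psi_\ell(m_\ell)\) replaced by one surjection. The only minor difference is in the first case: the paper gets \(\tr_S(M)=\mathfrak m_S\) directly from \(\mathfrak m_S\subseteq\tr_S(M)\) and the maximality of \(\mathfrak m_S\), so your homogeneity argument via \autoref{rem:interestingfiniteness} is valid but unnecessary.
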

\begin{proof}
Since \(S/\mathfrak m_S\cong \kk\), any ideal containing \(\mathfrak m_S\) is either \(\mathfrak m_S\) or \(S\). Thus \(\tr_S(M)\) is either \(\mathfrak m_S\) or \(S\).

If \(\tr_S(M)=\mathfrak m_S\), then every image of an \(S\)-linear map \(M\to S\) is contained in \(\mathfrak m_S\). Hence the assertion follows from the definition of the trace ideal.
If \(\tr_S(M)=S\), then \(1\in\tr_S(M)\), so there exist \(S\)-linear maps \(\psi_\ell:M\to S\) and elements \(m_\ell\in M\) such that
$1=\sum_{\ell=1}^n \psi_\ell(m_\ell)$.
For \(x\in\mathfrak m_S\), set \(\varphi_\ell:=x\psi_\ell\). Then we have
$x=\sum_{\ell=1}^n \varphi_\ell(m_\ell)$,
and \(\varphi_\ell(M)\subseteq xS\subseteq\mathfrak m_S\) for every \(\ell\).
\end{proof}

\begin{theorem}[{cf. \cite[Theorem 3.11]{KumashiroMiyashita2025}}]
\label{thm:fiber-top-trace-exact-formula}
Assume that \(A\) and \(B\) are reduced and equidimensional, and that \(\dim A,\dim B>0\). Set \(d:=\dim R\). Then we have
\[
\tr_R(\Omega_{R/\kk}^d)=\tr_A(\Omega_{A/\kk}^d)^\dagger R\oplus \tr_B(\Omega_{B/\kk}^d)^\dagger R.
\]
\end{theorem}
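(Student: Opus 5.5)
The plan is to prove the two inclusions separately. First note that \(d=\max\{\dim A,\dim B\}\). If, say, \(\dim A<d\), then \(\tr_A(\Omega^d_{A/\kk})=0\) by \autoref{prop:3.8}, so \(0^\dagger=0\) and that summand vanishes. Write \(I_A:=\tr_A(\Omega^d_{A/\kk})\) and \(I_B:=\tr_B(\Omega^d_{B/\kk})\). Throughout, use the identifications \(K_B=\mathfrak m_AR\cong \mathfrak m_A\) and \(K_A=\mathfrak m_BR\cong\mathfrak m_B\) as \(R\)-modules, with \(R\) acting through \(\pi_A\), resp. \(\pi_B\). Since \(K_AK_B=0\), the ideal \(J^\dagger R\) for \(J\subseteq \mathfrak m_A\) is simply \(\iota_A(J)\subseteq K_B\), and the sum in the statement is direct because \(\mathfrak m_R=K_A\oplus K_B\).

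\emph{Inclusion \(\supseteq\).} The surjection \(\pi_A\) induces a surjection \(\Omega_{R/\kk}\to\Omega_{A/\kk}\), which is \(\pi_A\)-semilinear. Taking exterior powers gives an \(R\)-linear surjection \(p_A\colon\Omega^d_{R/\kk}\to\Omega^d_{A/\kk}\). If \(I_A=0\) there is nothing to prove, so assume \(I_A\neq 0\). Let \(x\in I_A^\dagger\subseteq\mathfrak m_A\). By \autoref{lem:maps-into-maximal-ideal} (applied to \(\mathfrak m_A\subseteq I_A\) when \(I_A=A\); when \(I_A\) is proper it holds trivially), we can write \(x=\sum\varphi_\ell(m_\ell)\) with \(\varphi_\ell\colon\Omega^d_{A/\kk}\to A\) and \(\varphi_\ell(\Omega^d_{A/\kk})\subseteq\mathfrak m_A\). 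Then \(\iota_A\circ\varphi_\ell\circ p_A\colon\Omega^d_{R/\kk}\to K_B\subseteq R\) is \(R\)-linear. Lifting each \(m_\ell\) along \(p_A\) shows \(\iota_A(x)\in\tr_R(\Omega^d_{R/\kk})\). The symmetric argument handles \(B\).

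\emph{Inclusion \(\subseteq\).} Let \(\psi\colon\Omega^d_{R/\kk}\to R\) and consider \(\pi_A\circ\psi\). This map factors through \(\Omega^d_{R/\kk}\otimes_RA\), and I will describe that module using \autoref{prop:fiber-differentials}. Tensoring with \(A\) kills \(\mathfrak m_B\), so the relation \(b\,da+a\,db\) becomes \(a\,db\). This gives
\[
\Omega_{R/\kk}\otimes_RA\cong\Omega_{A/\kk}\oplus V,
\]
where \(V\) is a quotient of \((\Omega_{B/\kk}\otimes_B\kk)\otimes_\kk A\) and is annihilated by \(\mathfrak m_A\). Hence
\[
\Omega^d_{R/\kk}\otimes_RA\cong\bigoplus_j\Omega^{d-j}_{A/\kk}\otimes\textstyle\bigwedge^jV,
\]
and every summand with \(j\ge1\) is killed by \(\mathfrak m_A\). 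Since \(A\) is reduced, equidimensional, and of positive dimension, \(\mathfrak m_A\) is not an associated prime, so it contains a nonzerodivisor. Therefore every \(A\)-linear map from these summands to \(A\) is zero. Consequently \(\pi_A\circ\psi\) factors through \(\Omega^d_{A/\kk}\), and \(\pi_A(\psi(\Omega^d_{R/\kk}))\subseteq I_A\). Symmetrically, \(\pi_B(\psi(\Omega^d_{R/\kk}))\subseteq I_B\).

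It remains to see that \(\psi(\omega)=(\pi_A\psi(\omega),\pi_B\psi(\omega))\) lies in \(I_A^\dagger R\oplus I_B^\dagger R\). If one of \(I_A,I_B\) is proper, say \(I_A\), then \(\pi_A\psi(\omega)\in\mathfrak m_A\), so \(\varepsilon_A(\pi_A\psi(\omega))=0\). By the fiber-product condition the \(B\)-component then also lies in \(\mathfrak m_B\), hence in \(I_B\cap\mathfrak m_B\subseteq I_B^\dagger\), and we are done. The main obstacle is the case \(I_A=A\) and \(I_B=B\), where one must show \(\psi(\Omega^d_{R/\kk})\subseteq\mathfrak m_R\), i.e.\ that \(\Omega^d_{R/\kk}\) has no free summand. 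My first attempt will use the relation \(b\,da=-a\,db\) in \(\Omega_{R/\kk}\) for \(a\in\mathfrak m_A\), \(b\in\mathfrak m_B\). Since \(\Omega^d_{R/\kk}\) is generated by wedges of \(d\)-images of elements of \(K_A\cup K_B\), it suffices to show \(\psi(da\wedge\eta)\in\mathfrak m_R\) for such a generator. Multiplying \(\psi(da\wedge\eta)\) by \(b\) gives \(-a\,\psi(db\wedge\eta)\in K_B\), which forces the \(A\)-component of \(\psi(da\wedge\eta)\) to be killed by \(\mathfrak m_B\)-multiplication. Comparing with the component of \(b\,\psi(da\wedge\eta)\) in \(K_A\) should then force \(\varepsilon(\psi(da\wedge\eta))\cdot b=0\) for all \(b\in\mathfrak m_B\), hence \(\varepsilon(\psi(da\wedge\eta))=0\). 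If this direct argument fails, the fallback is \autoref{prop:Yeahhhh}: a free summand of \(\Omega^d_{R/\kk}\) gives a homogeneous derivation \(D\) with slice \(t=t_A+t_B\). Applying \(D\) to the relations \(K_AK_B=0\) should then contradict the facts that \(\dim A,\dim B>0\) and that \(K_A\) and \(K_B\) contain nonzerodivisors of \(B\), resp. \(A\).
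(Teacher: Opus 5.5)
Your proposal is correct. The inclusion $\supseteq$ is the paper's argument: lift along $\Omega^d_{R/\kk}\twoheadrightarrow\Omega^d_{A/\kk}$, use maps into $\mathfrak m_A$ from \autoref{lem:maps-into-maximal-ideal}, then compose with $\iota_A$.

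The step you flag as the main obstacle works by your first attempt, so the fallback is not needed. Take $a\in\mathfrak m_A$, $b\in\mathfrak m_B$ and a generator $\omega=d\iota_A(a)\wedge\eta$.
\begin{itemize}
\item The relation $\iota_B(b)\,d\iota_A(a)=-\iota_A(a)\,d\iota_B(b)$ gives $\iota_B(b)\psi(\omega)\in K_A\cap K_B=0$.
\item Hence $b\cdot\pi_B\psi(\omega)=0$ for all $b\in\mathfrak m_B$.
\item Since $(0:_B\mathfrak m_B)=0$, which is the nonzerodivisor fact you already used for $A$, we get $\pi_B\psi(\omega)=0$.
\item So $\psi(\omega)\in K_B\subseteq\mathfrak m_R$.
\end{itemize}
What gets annihilated by $\mathfrak m_B$ is the $B$-coordinate $\pi_B\psi(\omega)$, not the ``$A$-component'' as you wrote.

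Combined with $\pi_A\psi(\Omega^d_{R/\kk})\subseteq I_A$, this also removes the need for your case distinction. Generators built only from $d\iota_A(\mathfrak m_A)$ land in $\iota_A(I_A\cap\mathfrak m_A)=I_A^\dagger R$. Symmetric ones land in $I_B^\dagger R$, and mixed ones go to $0$.

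For $\subseteq$, your route differs from the paper's. The paper writes $\Omega^d_{R/\kk}$ as a quotient of $\bigoplus_i X_i$, where $X_i=\bigwedge^{d-i}_R(\Omega_{A/\kk}\otimes_AR)\otimes_R\bigwedge^i_R(\Omega_{B/\kk}\otimes_BR)$. It then bounds each $\tr_R(X_i)$ by citing \cite[Proposition~3.6]{KumashiroMiyashita2025}, mirroring the tensor-product theorem.

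You instead work with a single $\psi$. Your description $\Omega_{R/\kk}\otimes_RA\cong\Omega_{A/\kk}\oplus V$ with $\mathfrak m_AV=0$ shows that $\pi_A\circ\psi$ factors through $\Omega^d_{A/\kk}$. The relation $d(\iota_A(a)\iota_B(b))=0$ then removes the constant term.

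What your version buys:
\begin{itemize}
\item It is self-contained.
\item Beyond $\dim A,\dim B>0$, it uses only $(0:_A\mathfrak m_A)=(0:_B\mathfrak m_B)=0$.
\item It shows in passing that no $\Omega^k_{R/\kk}$ with $k\ge 1$ has a free summand.
\end{itemize}

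It is also on firmer ground than the paper's argument as written. In \autoref{prop:fiber-differentials}, $\Omega_{A/\kk}\otimes_AR$ is the base change along $\iota_A$. For $A=B=\kk[x]$ this gives $X_0\cong R$, so $\tr_R(X_0)=R\neq I_A^\dagger R$. The formula $\tr_R(M)=\tr_A(M)^\dagger R$ is valid for $A$-modules $M$ regarded over $R$ through $\pi_A$. That is exactly the situation your factorization through $\Omega^d_{R/\kk}\otimes_RA$ produces.
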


\begin{proof}
Set \(I_A:=\tr_A(\Omega_{A/\kk}^d)\) and \(I_B:=\tr_B(\Omega_{B/\kk}^d)\). For \(0\le i\le d\), let
\[
X_i:=\bigwedge_R^{d-i}(\Omega_{A/\kk}\otimes_A R)\otimes_R \bigwedge_R^i(\Omega_{B/\kk}\otimes_B R).
\]
Then we obtain
$\bigwedge_R^d\bigl((\Omega_{A/\kk}\otimes_A R)\oplus(\Omega_{B/\kk}\otimes_B R)\bigr)\cong \bigoplus_{i=0}^d X_i$.
By \autoref{prop:fiber-differentials}, \(\Omega_{R/\kk}\) is a quotient of \((\Omega_{A/\kk}\otimes_A R)\oplus(\Omega_{B/\kk}\otimes_B R)\), so there is a surjective \(R\)-linear map
$\bigoplus_{i=0}^d X_i\twoheadrightarrow \Omega_{R/\kk}^d$.
Hence
$\tr_R(\Omega_{R/\kk}^d)\subseteq \sum_{i=0}^d \tr_R(X_i)$.

We first show that \(\tr_R(X_i)=0\) for all \(0<i<d\). Fix such an \(i\). Since
\[
X_i\cong \bigl(\Omega_{A/\kk}^{d-i}\otimes_A R\bigr)\otimes_R \bigl(\Omega_{B/\kk}^{i}\otimes_B R\bigr),
\]
it follows from \cite[Proposition~1.3]{herzog2019trace} that
$\tr_R(X_i)\subseteq \tr_R(\Omega_{A/\kk}^{d-i}\otimes_A R)\cap \tr_R(\Omega_{B/\kk}^{i}\otimes_B R)$.

Set \(J_i:=\tr_A(\Omega_{A/\kk}^{d-i})\) and \(L_i:=\tr_B(\Omega_{B/\kk}^{i})\). Since \(A\) and \(B\) are graded local, one has either \(J_i\subseteq \mathfrak m_A\) or \(J_i=A\), and either \(L_i\subseteq \mathfrak m_B\) or \(L_i=B\). Moreover, since \(A\) and \(B\) are reduced and \(\dim A,\dim B>0\), we have \((0:_A\mathfrak m_A)=(0:_B\mathfrak m_B)=0\). Therefore \cite[Proposition 3.6]{KumashiroMiyashita2025}
implies that
$\tr_R(\Omega_{A/\kk}^{d-i}\otimes_A R)\subseteq \mathfrak m_A R$
and
$\tr_R(\Omega_{B/\kk}^{i}\otimes_B R)\subseteq \mathfrak m_B R$.
Since \(\mathfrak m_A R\cap \mathfrak m_B R=(0)\) by \cite[Remark 3.3]{KumashiroMiyashita2025}, it follows that \(\tr_R(X_i)=0\).
Thus
$\tr_R(\Omega_{R/\kk}^d)\subseteq \tr_R(X_0)+\tr_R(X_d)$.

Now \(X_0\cong \Omega_{A/\kk}^d\otimes_A R\), so \cite[Proposition~3.6]{KumashiroMiyashita2025} gives \(\tr_R(X_0)=I_A^\dagger R\). Indeed, if \(I_A\neq A\), then \(I_A\subseteq \mathfrak m_A\) and \cite[Proposition~3.6~(2)]{KumashiroMiyashita2025} yields \(\tr_R(X_0)=I_A R\); if \(I_A=A\), then \cite[Proposition~3.6~(3)]{KumashiroMiyashita2025} yields \(\tr_R(X_0)=\mathfrak m_A R\). Similarly, \(\tr_R(X_d)=I_B^\dagger R\). Hence \(\tr_R(\Omega_{R/\kk}^d)\subseteq I_A^\dagger R+I_B^\dagger R\). Since \(I_A^\dagger\subseteq \mathfrak m_A\) and \(I_B^\dagger\subseteq \mathfrak m_B\), we have \(I_A^\dagger R\cap I_B^\dagger R=(0)\), so we have
$\tr_R(\Omega_{R/\kk}^d)\subseteq I_A^\dagger R\oplus I_B^\dagger R.$

For the reverse inclusion, we first show \(I_A^\dagger R\subseteq \tr_R(\Omega_{R/\kk}^d)\).

Assume first that \(I_A\neq A\). Then \(I_A\subseteq \mathfrak m_A\). Let \(a\in I_A\). By the definition, there exist \(A\)-linear maps
$\alpha_\ell:\Omega_{A/\kk}^d\to A$
and elements \(u_\ell\in\Omega_{A/\kk}^d\) such that
$a=\sum_{\ell=1}^n \alpha_\ell(u_\ell)$.
Since \(\alpha_\ell(\Omega_{A/\kk}^d)\subseteq I_A\subseteq\mathfrak m_A\), the map
$\widetilde{\alpha}_\ell:\Omega_{A/\kk}^d\to R$,
$u\mapsto (\alpha_\ell(u),0)$
is a well-defined \(R\)-linear map. The projection \(\pi_A:R\twoheadrightarrow A\) induces a surjective \(R\)-linear map
$\rho_A:\Omega_{R/\kk}^d\twoheadrightarrow \Omega_{A/\kk}^d$.
For each \(\ell\), choose \(v_\ell\in\Omega_{R/\kk}^d\) with \(\rho_A(v_\ell)=u_\ell\). Then we obtain
$(a,0)=\sum_{\ell=1}^n (\widetilde{\alpha}_\ell\circ\rho_A)(v_\ell)$,
and hence \((a,0)\in\tr_R(\Omega_{R/\kk}^d)\). Thus \(I_AR\subseteq \tr_R(\Omega_{R/\kk}^d)\).

If \(I_A=A\), then \(\mathfrak m_A\subseteq \tr_A(\Omega_{A/\kk}^d)\). By \autoref{lem:maps-into-maximal-ideal}, for each \(a\in\mathfrak m_A\), there exist \(A\)-linear maps
$\alpha_\ell:\Omega_{A/\kk}^d\to A$
and \(u_\ell\in\Omega_{A/\kk}^d\) such that
$a=\sum_{\ell=1}^n \alpha_\ell(u_\ell)$
and \(\alpha_\ell(\Omega_{A/\kk}^d)\subseteq\mathfrak m_A\) for all \(\ell\). Repeating the same construction as above, we obtain \((a,0)\in\tr_R(\Omega_{R/\kk}^d)\). Thus we have \(\mathfrak m_AR\subseteq \tr_R(\Omega_{R/\kk}^d)\).

By symmetry, we obtain \(I_B^\dagger R\subseteq \tr_R(\Omega_{R/\kk}^d)\). Therefore we have
$I_A^\dagger R\oplus I_B^\dagger R\subseteq \tr_R(\Omega_{R/\kk}^d)$,
which completes the proof.
\end{proof}

\begin{corollary}\label{cor:fiber-nearly-regular}
Assume that \(A\) and \(B\) are reduced and equidimensional, and that \(\dim A,\dim B>0\). Then the following are equivalent:
\begin{enumerate}[\rm (1)]
\item \(R\) is nearly regular;
\item Both \(A\) and \(B\) are nearly regular and \(\dim A=\dim B\).
\end{enumerate}
\end{corollary}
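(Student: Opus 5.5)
The plan is to read both implications off the formula of \autoref{thm:fiber-top-trace-exact-formula}:
\[
\tr_R(\Omega_{R/\kk}^d)=I_A^\dagger R\oplus I_B^\dagger R,
\qquad d=\dim R=\max\{\dim A,\dim B\},
\]
where $I_A=\tr_A(\Omega^d_{A/\kk})$ and $I_B=\tr_B(\Omega^d_{B/\kk})$. We also use $\mathfrak m_R=\mathfrak m_AR\oplus\mathfrak m_BR$, and the fact that $\iota_A$ identifies $\mathfrak m_A$ with $\mathfrak m_AR$ (and similarly for $B$).

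Since $I_A^\dagger\subseteq\mathfrak m_A$ and $I_B^\dagger\subseteq\mathfrak m_B$, the inclusion $\mathfrak m_R\subseteq \tr_R(\Omega^d_{R/\kk})$ holds if and only if $\mathfrak m_AR\subseteq I_A^\dagger R$ and $\mathfrak m_BR\subseteq I_B^\dagger R$. To see this, project an element of $\mathfrak m_AR$ along the direct sum and apply $\pi_A$ to both sides. This gives
\[
R \text{ nearly regular} \iff \mathfrak m_A\subseteq I_A^\dagger \text{ and } \mathfrak m_B\subseteq I_B^\dagger.
\]
Moreover, $\mathfrak m_A\subseteq I_A^\dagger$ holds if and only if $I_A\supseteq\mathfrak m_A$: if $I_A=A$ this is automatic, and otherwise $I_A^\dagger=I_A$. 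So $R$ is nearly regular if and only if $\tr_A(\Omega^d_{A/\kk})\supseteq\mathfrak m_A$ and $\tr_B(\Omega^d_{B/\kk})\supseteq\mathfrak m_B$, where the exponent is the common $d=\dim R$.

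For (2)$\Rightarrow$(1): if $\dim A=\dim B=d$, these two conditions are exactly the nearly regularity of $A$ and of $B$.

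For (1)$\Rightarrow$(2), assume without loss of generality that $\dim B<\dim A=d$. Since $B$ is reduced Noetherian with $B_0=\kk$ perfect and $d>\dim B$, \autoref{prop:3.8} gives $\tr_B(\Omega^d_{B/\kk})=0$. Hence $I_B^\dagger R=0$, which cannot contain $\mathfrak m_BR\neq 0$; here $\mathfrak m_B\neq0$ because $\dim B>0$. This contradiction forces $\dim A=\dim B$, and then the previous paragraph shows that $A$ and $B$ are nearly regular.

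The only delicate point is the direct-sum bookkeeping: an element $(a,0)$ with $a\in\mathfrak m_A$ lying in $I_A^\dagger R\oplus I_B^\dagger R$ forces $a\in I_A^\dagger$. This follows by applying $\pi_A$, which kills $\mathfrak m_BR=K_A$ and maps $I_A^\dagger R$ onto $I_A^\dagger$.
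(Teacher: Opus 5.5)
Your proposal is correct and follows the same route as the paper. You apply the fiber-product formula $\tr_R(\Omega^d_{R/\kk})=I_A^\dagger R\oplus I_B^\dagger R$, use \autoref{prop:3.8} to kill the trace of the lower-dimensional factor and so force $\dim A=\dim B$, and then compare the direct summands with $\mathfrak m_R=\mathfrak m_AR\oplus\mathfrak m_BR$. The only differences from the paper are cosmetic: you establish the componentwise equivalence first, and you get the contradiction from $\mathfrak m_B\neq 0$ rather than from $\mathfrak m_BR\subsetneq\mathfrak m_R$.
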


\begin{proof}
Set \(d:=\dim R\), \(I_A:=\tr_A(\Omega_{A/\kk}^d)\), and \(I_B:=\tr_B(\Omega_{B/\kk}^d)\). By \autoref{thm:fiber-top-trace-exact-formula}, we have
\[
\tr_R(\Omega_{R/\kk}^d)=I_A^\dagger R\oplus I_B^\dagger R.
\]

Assume first that \(R\) is nearly regular. We show that \(\dim A=\dim B\). If \(\dim A<\dim B=d\), then \(I_A=\tr_A(\Omega_{A/\kk}^d)=0\) by \autoref{prop:3.8}. Hence \(\tr_R(\Omega_{R/\kk}^d)=I_B^\dagger R\subseteq \mathfrak m_B R\subsetneq \mathfrak m_A R\oplus \mathfrak m_B R=\mathfrak m_R\), which contradicts the nearly regularity of \(R\). The case \(\dim B<\dim A\) is symmetric. Thus \(\dim A=\dim B\).

Now put \(d=\dim A=\dim B=\dim R\). Since \(R\) is nearly regular, we have \(\tr_R(\Omega_{R/\kk}^d)\supseteq \mathfrak m_R=\mathfrak m_A R\oplus \mathfrak m_B R\). By the above formula, this is equivalent to \(I_A^\dagger\supseteq \mathfrak m_A\) and \(I_B^\dagger\supseteq \mathfrak m_B\), which is in turn equivalent to \(I_A\supseteq \mathfrak m_A\) and \(I_B\supseteq \mathfrak m_B\). Hence both \(A\) and \(B\) are nearly regular.

Conversely, assume that both \(A\) and \(B\) are nearly regular and that \(\dim A=\dim B\). Put \(d=\dim A=\dim B=\dim R\). Then \(I_A\supseteq \mathfrak m_A\) and \(I_B\supseteq \mathfrak m_B\), hence \(I_A^\dagger\supseteq \mathfrak m_A\) and \(I_B^\dagger\supseteq \mathfrak m_B\). Therefore
\[
\tr_R(\Omega_{R/\kk}^d)=I_A^\dagger R\oplus I_B^\dagger R\supseteq \mathfrak m_A R\oplus \mathfrak m_B R=\mathfrak m_R.
\]
Thus \(R\) is nearly regular.
\end{proof}

\subsection{Nearly regular Stanley--Reisner rings}
Finally, as an application of \autoref{cor:fiber-nearly-regular}, we characterize the nearly regularity of Stanley--Reisner rings, which play an important role in combinatorial commutative algebra.

We recall some notation on simplicial complexes and Stanley-Reisner rings.
Set $V=[n]=\{1,2,\cdots,n\}$.
A nonempty subset $\Delta$ of the power set $2^{V}$ of $V$
is called a \textit{simplicial complex} on $V$ if
$\{v\} \in \Delta$ for all $v \in V$, and $F \in \Delta$,
$H \subset F$ implies $H \in \Delta$.
For a face $F$ of $\Delta$, we put
$$\lk_\Delta(F) := \{ G \in \Delta \;;\; G\cup F \in \Delta, F\cap G = \emptyset  \}.$$
This complex is called the \textit{link} of $F$.
Let \(\kk\) be a field and let \(T=\kk[x_1,\ldots,x_n]\). The \textit{Stanley--Reisner ideal} of \(\Delta\) is the squarefree monomial ideal
\[
I_\Delta:=\left(x_{i_1}x_{i_2}\cdots x_{i_p}\mid 1\leq i_1<\cdots<i_p\leq n,\ \{i_1,\ldots,i_p\}\notin\Delta\right)\subseteq T.
\]
The quotient ring \(\kk[\Delta]:=T/I_\Delta\) is called the \textit{Stanley--Reisner ring} of \(\Delta\). Since \(I_\Delta\) is squarefree, \(\kk[\Delta]\) is reduced.

The following result is presumably well known to specialists in Stanley--Reisner rings, but we include a proof for the reader's convenience.

\begin{remark}\label{prop:sr-regular-punctured-simplex}
Let $\kk$ be a field and let \(\Delta\) be a connected simplicial complex on the vertex set \(V\). Then \(\kk[\Delta]\) is regular on the punctured spectrum if and only if \(\Delta\) is a simplex.
\end{remark}

\begin{proof}
Set \(R=\kk[\Delta]\) and \(\mathfrak m=(x_v\mid v\in V)\). If \(\Delta\) is a simplex, then \(R\) is a polynomial ring over \(\kk\), and hence \(R\) is regular.

Conversely, assume that \(R\) is regular on \(\Spec R\setminus\{\mathfrak m\}\). We first show that \(\lk_\Delta(v)\) is a simplex for every vertex \(v\in V\). Put \(\mathfrak p_v=(x_u\mid u\in V\setminus\{v\})R\). Since \(\{v\}\in\Delta\), the ideal \(\mathfrak p_v\) is a prime ideal of \(R\), and \(\mathfrak p_v\neq\mathfrak m\). Thus \(R_{\mathfrak p_v}\) is regular. By the standard localization formula for Stanley--Reisner rings, we have
\[
R_{\mathfrak p_v}\cong \kk(x_v)[\lk_\Delta(v)]_{\mathfrak n},
\]
where \(\mathfrak n\) denotes the irrelevant maximal ideal of \(\kk(x_v)[\lk_\Delta(v)]\). Hence \(\kk(x_v)[\lk_\Delta(v)]_{\mathfrak n}\) is regular. If \(\Gamma=\lk_\Delta(v)\) and \(r\) is the number of vertices of \(\Gamma\), then the embedding dimension of this local ring is \(r\), while its Krull dimension is \(\dim\Gamma+1\). Therefore \(r=\dim\Gamma+1\), so \(\Gamma\) has a face containing all its vertices. Thus \(\lk_\Delta(v)\) is a simplex.

Now let \(F\) and \(G\) be facets of \(\Delta\) with \(F\cap G\neq\emptyset\). Choose \(v\in F\cap G\). Then \(F\setminus\{v\}\) and \(G\setminus\{v\}\) are facets of \(\lk_\Delta(v)\). Since \(\lk_\Delta(v)\) is a simplex, it has a unique facet, and hence \(F\setminus\{v\}=G\setminus\{v\}\). Thus \(F=G\). Hence any two distinct facets of \(\Delta\) are disjoint. Since \(\Delta\) is connected, this forces \(\Delta\) to have only one facet. Therefore \(\Delta\) is a simplex.
\end{proof}

Finally, as an application of \autoref{cor:fiber-nearly-regular}, we characterize the nearly regularity of Stanley--Reisner rings whose connected components are pure.

\begin{corollary}\label{cor:sr-nearly-regular}
Let \(\kk\) be a field of characteristic zero, and let \(\Delta\) be a simplicial complex whose connected components are \(\Delta_1,\ldots,\Delta_n\). Assume that each \(\Delta_i\) is pure. Then the following conditions are equivalent:
\begin{enumerate}[\rm (1)]
\item \(\kk[\Delta]\) is nearly regular;
\item each \(\Delta_i\) is a simplex, and \(\dim \Delta_i=\dim \Delta\) for all \(1\leq i\leq n\).
\end{enumerate}
In particular, if \(\kk[\Delta]\) is nearly regular, then we have \(\tr_{\kk[\Delta]}(\omega_{\kk[\Delta]})\supseteq \mathfrak m_{\kk[\Delta]}\).
\end{corollary}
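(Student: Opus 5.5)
The plan is to reduce to the fiber product formula by induction on the number $n$ of connected components. If $\Delta=\Delta_1\sqcup\cdots\sqcup\Delta_n$ with disjoint vertex sets, then $\kk[\Delta]\cong \kk[\Delta_1]\times_\kk\cdots\times_\kk\kk[\Delta_n]$, an iterated fiber product over $\kk$ of graded rings with degree-zero part $\kk$. Indeed, every monomial $x_ux_v$ with $u,v$ in different components lies in $I_\Delta$, so $I_\Delta=\sum_i I_{\Delta_i}T+(x_ux_v\mid u,v \text{ in different components})$. Each $\kk[\Delta_i]$ is reduced, and it is equidimensional because $\Delta_i$ is pure: its minimal primes correspond to the facets of $\Delta_i$, all of dimension $\dim\Delta_i+1$. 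Also $\dim\kk[\Delta_i]=\dim\Delta_i+1>0$.

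The case $n=1$ needs its own argument. For (1)$\Rightarrow$(2), nearly regular gives an isolated singularity by \autoref{rem:OKOKOKOK}~(2), which applies since $\kk[\Delta]$ is reduced and equidimensional in characteristic $0$. Then \autoref{prop:sr-regular-punctured-simplex} shows that $\Delta$ is a simplex. For (2)$\Rightarrow$(1), $\kk[\Delta]$ is then a polynomial ring, so its top differential trace is the whole ring by \autoref{t1hm:EquivRegular1}.

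For $n\ge 2$, write $\kk[\Delta]\cong A\times_\kk B$ with $A=\kk[\Delta_1\sqcup\cdots\sqcup\Delta_{n-1}]$ and $B=\kk[\Delta_n]$, and apply \autoref{cor:fiber-nearly-regular}. That corollary needs $A$ and $B$ reduced and equidimensional with positive dimension.
\begin{itemize}
\item For (1)$\Rightarrow$(2): first establish equidimensionality of $A$. The minimal primes of a fiber product are the (pullbacks of the) minimal primes of the factors, so $A$ is equidimensional exactly when $\dim\Delta_1=\cdots=\dim\Delta_{n-1}$. I would therefore strengthen the induction hypothesis to include this equality.
\item Better is a direct argument on $\kk[\Delta]$ itself. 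If the dimensions are not all equal, split $\Delta$ into $\Delta'$, the union of the components of maximal dimension, and $\Delta''$, the rest. Then $\kk[\Delta]=\kk[\Delta']\times_\kk\kk[\Delta'']$ with both factors equidimensional, after iterating the split if $\Delta''$ has mixed dimensions.
\item The key step is to redo the first half of the proof of \autoref{cor:fiber-nearly-regular}. By \autoref{prop:3.8}, $\tr(\Omega^d)$ of the smaller-dimensional factor vanishes. The inclusion half of \autoref{thm:fiber-top-trace-exact-formula} then gives $\tr_R(\Omega^d_{R/\kk})\subseteq I^\dagger_{A}R\oplus 0\subseteq \mathfrak m_{A}R\subsetneq\mathfrak m_R$, where $A$ here is the top-dimensional factor. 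That inclusion only uses the vanishing of mixed terms and \cite[Proposition 3.6]{KumashiroMiyashita2025}, which need reducedness and positive dimension but not equidimensionality of the small factor.
\end{itemize}
This contradicts nearly regularity, so all $\dim\Delta_i$ agree. The ring is then equidimensional, and the corollary splits nearly regularity into nearly regularity of each factor. Inductively each $\kk[\Delta_i]$ is nearly regular, hence $\Delta_i$ is a simplex by the case $n=1$.

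For (2)$\Rightarrow$(1), all components are simplices of equal dimension. Each $\kk[\Delta_i]$ is then a polynomial ring in $\dim\Delta+1$ variables, hence nearly regular, and iterating \autoref{cor:fiber-nearly-regular} yields nearly regularity; all intermediate fiber products are equidimensional of the same dimension. The final claim then follows from \autoref{cor:verynice2}. Since $\kk[\Delta]$ is reduced and equidimensional, $\mathfrak m\subseteq\tr(\Omega^d)$ gives $\mathfrak m\subseteq\sqrt{\tr(\omega)}$, which only yields $\tr(\omega)$ being $\mathfrak m$-primary. The stronger statement $\tr(\omega)\supseteq\mathfrak m$ instead comes from the known result that fiber products of Gorenstein (polynomial) rings of equal dimension are nearly Gorenstein \cite[Theorem 3.11]{KumashiroMiyashita2025}, applied inductively with the same $\dagger$-formula.

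The main obstacle is the mixed-dimension case. The fiber product corollary assumes equidimensional factors, so I must check that only the upper-bound half of \autoref{thm:fiber-top-trace-exact-formula} is needed and that it holds without equidimensionality of the lower-dimensional factor.
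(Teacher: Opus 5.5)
Your argument is correct and follows the paper's proof. It uses the same decomposition $\kk[\Delta]\cong\kk[\Delta_1]\times_\kk\cdots\times_\kk\kk[\Delta_n]$, and \autoref{cor:fiber-nearly-regular} to force equal dimensions and split nearly regularity into the factors. For a single connected component it uses \autoref{rem:OKOKOKOK} plus \autoref{prop:sr-regular-punctured-simplex}. For the canonical trace it appeals to \cite{KumashiroMiyashita2025}; the paper cites their Theorem~3.20(2). As you observe yourself, \autoref{cor:verynice2} alone only gives $\mathfrak m\subseteq\sqrt{\tr(\omega)}$, so drop the sentence saying the final assertion follows from it.

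The real difference is the mixed-dimension case. The paper just says ``by repeated application of \autoref{cor:fiber-nearly-regular}'', but that corollary needs both factors equidimensional. Once the $\dim\Delta_i$ take at least three distinct values, no splitting of $\kk[\Delta]$ into two fiber factors has both factors equidimensional. Your reduction supplies exactly the step the paper leaves implicit: split off the top-dimensional components and show $\tr_R(\Omega^d_{R/\kk})\subseteq\mathfrak m_AR\subsetneq\mathfrak m_R$ directly. Once all dimensions agree, every partial fiber product is equidimensional and your induction goes through.

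I would not justify that inclusion by re-reading the proof of \autoref{thm:fiber-top-trace-exact-formula}. The modules $X_i$ there are base changes along $\iota_A,\iota_B$, and their traces are not what that proof asserts. For instance, if $A$ is a polynomial ring in $d$ variables, then $X_0\cong R$ and $\tr_R(X_0)=R$, not $\mathfrak m_AR$.

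A direct argument is short. Put $A=\kk[\Delta']$ and $B=\kk[\Delta'']$ with $0<\dim B<d$, and take any $R$-linear $\phi\colon\Omega^d_{R/\kk}\to R$.
\begin{enumerate}
\item $\pi_B\circ\phi$ factors through $\Omega^d_{R/\kk}\otimes_RB=\bigwedge^d_B(\Omega_{R/\kk}\otimes_RB)$.
\item Since $\iota_B$ is a section of $\pi_B$, the conormal sequence of $\pi_B$ gives $\Omega_{R/\kk}\otimes_RB\cong\Omega_{B/\kk}\oplus N$. Here $N$ is a quotient of $\mathfrak m_AR/(\mathfrak m_AR)^2$, so $\mathfrak m_BN=0$.
\item Hence $\Omega^d_{R/\kk}\otimes_RB$ is $\Omega^d_{B/\kk}$ plus summands killed by $\mathfrak m_B$. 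The first has no nonzero map to $B$ by \autoref{prop:3.8}. The others have none because $(0:_B\mathfrak m_B)=0$, as $B$ is reduced of positive dimension.
\item So $\pi_B\circ\phi=0$, i.e.\ $\phi(\Omega^d_{R/\kk})\subseteq\ker\pi_B=\mathfrak m_AR$.
\end{enumerate}
No equidimensionality of $B$ is used anywhere.
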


\begin{proof}
Set
\(S=\kk[\Delta]\)
and
\(S_i=\kk[\Delta_i]\) for \(1\leq i\leq n\).
By \cite[Lemma 2.24]{KumashiroMiyashita2025}, we have a canonical isomorphism
$S\cong S_1\times_\kk S_2\times_\kk\cdots\times_\kk S_n$,
where the right-hand side denotes the iterated fiber product with respect to the natural augmentations \(S_i\twoheadrightarrow\kk\).
Moreover, each \(S_i\) is
equidimensional, because \(\Delta_i\) is pure. Therefore, by repeated application of \autoref{cor:fiber-nearly-regular}, the ring \(S\) is nearly regular if and only if each \(S_i\) is nearly regular and
$\dim S_1=\cdots=\dim S_n$.
Since \(\dim S_i=\dim \Delta_i+1\), this dimension condition is equivalent to \(\dim \Delta_i=\dim \Delta\) for all \(1\leq i\leq n\).

It remains to characterize when \(S_i\) is nearly regular. Since \(S_i\) is reduced and equidimensional, \autoref{rem:OKOKOKOK} shows that the nearly regularity of \(S_i\) implies that \(S_i\) is regular on the punctured spectrum. As \(\Delta_i\) is connected, \autoref{prop:sr-regular-punctured-simplex} then implies that \(\Delta_i\) is a simplex.

Conversely, if \(\Delta_i\) is a simplex and $\dim \Delta_i=\dim \Delta$ for all $1 \le i \le n$, then \(S_i\) is a polynomial ring over \(\kk\), and hence \(S_i\) is regular, in particular nearly regular.
Thus $S$ is nearly regular by \autoref{cor:fiber-nearly-regular}.

The last assertion follows from \cite[Theorem 3.20~(2)]{KumashiroMiyashita2025}.
\end{proof}

\appendix

\section{An alternative proof of \autoref{thm:LocalEquivRegular} via Nash transforms}\label{sect4}

In this appendix, we provide an alternative geometric proof of the regularity criterion (\autoref{thm:LocalEquivRegular}) using the theory of Nash transforms. 
While this approach requires the assumption of equidimensionality, it offers a distinct perspective by relating the top differential trace to the classical result of Nobile \cite{nobile1975Properties} on the characterization of smoothness. 
We include this discussion both for its independent geometric interest and to give a geometric perspective of trace ideals.

\begin{theorem}\label{TraceOmegaRegular}
    Let \(\kk\) be a field of characteristic \(0\) and let \((A, \mfrakm)\) be a reduced equidimensional local \(\kk\)-algebra essentially of finite type.
    Set \(d \defeq \dim A\).
    Then \(A\) is regular if and only if \(\tr_A(\Omega^d_{A/\kk}) = A\).
\end{theorem}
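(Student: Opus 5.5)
The forward direction is the easy one and I would dispatch it first. If \(A\) is regular, then \(A\) is smooth over the perfect field \(\kk\) by \citeSta{038X}. Hence \(\Omega^1_{A/\kk}\) is free of rank \(n \defeq d+\operatorname{trdeg}_\kk(A/\mfrakm)\ge d\). Then \(\Omega^d_{A/\kk}\) is free of rank \(\binom{n}{d}\ge 1\), so \(\tr_A(\Omega^d_{A/\kk})=A\) by \autoref{rem1}~(4).

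For the converse, the geometric route goes through the Nash transform and Nobile's theorem. I would write \(A=\mathcal{O}_{X,x}\) for a reduced equidimensional affine \(\kk\)-variety \(X\) of dimension \(n\) with \(x\in X\). After shrinking \(X\), I may assume every irreducible component passes through \(x\). Here I expect to have to handle the discrepancy between \(\dim A=d\) and the generic rank \(n\) of \(\Omega^1_{X/\kk}\) when \(x\) is not a closed point. To avoid this, I would first reduce to the case where \(\kk\) is the residue field: replace \(\kk\) by a coefficient-type subfield, or base change along a separable transcendence basis of \(A/\mfrakm\), using that \(\Omega^1_{A/\kk}\) sits in a split sequence with a free part of rank \(\operatorname{trdeg}\). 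The cleanest version of this is to work with the closed point of a variety of dimension \(d\), so that \(n=d\), and I will assume that situation.

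Now suppose \(\tr_A(\Omega^d_{A/\kk})=A\). By \autoref{rem1}~(4) there is a surjection \(\varphi\colon\Omega^d_{A/\kk}\to A\). Composing with the surjection \(\Omega^d_{A/\kk}\to \bigwedge^d(\Omega^1_{A/\kk}/\text{torsion})\) is the delicate point. Instead, I would use \(\varphi\) through the Plücker-type description. On the smooth locus \(U\), \(\Omega^1\) is locally free of rank \(d\), and \(\Omega^d|_U\) is a line bundle. Nash's transform \(\nu\colon\widetilde X\to X\) is the closure of the graph of \(U\to \operatorname{Grass}_d(\Omega^1_{X/\kk})\). Equivalently, it is the blowup of the ideal \(\tr\)-like image of \(\Omega^d\) in its double dual, namely the blowup of the image of \(\Omega^d_X\) in \(\omega\)-type reflexive hull. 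The key step is to show that the existence of a surjection \(\Omega^d_{A/\kk}\to A\) forces the Grassmannian point to extend uniquely over \(x\). In other words, \(\Omega^1_{A/\kk}\) admits a rank-\(d\) locally free quotient \(Q\) compatible with the generic one. I would try this via the Plücker embedding: \(\varphi\) gives a \(d\)-form \(\varphi\circ\wedge^d\) on \(\Omega^1\), and I must check that it is decomposable, which holds generically because the generic \(\Omega^d\) is a line, and hence decomposable everywhere by closedness of the Plücker relations on the reduced \(A\). This decomposable surjective form defines a morphism \(\operatorname{Spec}A\to\operatorname{Grass}_d\) extending the Gauss map. Therefore \(\nu\) is an isomorphism over a neighborhood of \(x\).

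Finally, Nobile's theorem in characteristic \(0\) says that if the Nash transform is an isomorphism, then \(X\) is smooth; equivalently \(\Omega^1/\text{torsion}\) locally free of rank \(d\) forces smoothness. This theorem requires equidimensionality and reducedness, which are both hypotheses here. Hence \(A\) is regular. The main obstacle is the middle step: turning a surjection from \(\Omega^d\) into a locally free rank-\(d\) quotient of \(\Omega^1\). This needs the Plücker decomposability argument together with the fact that \(\Omega^d\) has generic rank one on each component. That generic rank-one fact is where equidimensionality and \autoref{lem:ann1} enter.
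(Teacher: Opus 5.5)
The genuine gap is the reduction you flag but then simply assume: passing to the situation where $\Omega^1_{X/\kk}$ has generic rank exactly $d$, i.e.\ where $s=\operatorname{trdeg}_\kk(A/\mfrakm)$ is $0$. You were right to notice the discrepancy, but no such reduction exists, because the statement is false when $s>0$.

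Suppose you grant a splitting $\Omega^1_{A/\kk}\cong A^{s}\oplus\Omega^1_{A/K}$, with $K=\kk(t_1,\dots,t_s)\subseteq A$ lifting a transcendence basis of $A/\mfrakm$. Even then, a free summand of $\Omega^d_{A/\kk}\cong\bigoplus_j\bigwedge^jA^s\otimes_A\Omega^{d-j}_{A/K}$ can come from a term with $j\geq 1$. That only yields $\tr_A(\Omega^{d-j}_{A/K})=A$, not $\tr_A(\Omega^{d}_{A/K})=A$.

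Concretely, take $A=(\QQ[x,y,t]/(y^2-x^3))_{(x,y)}$. This is a local domain essentially of finite type over $\QQ$, with $\dim A=1$ and $A/\mfrakm\cong\QQ(t)$. Then $\Omega^1_{A/\QQ}\cong A\,dt\oplus(A\,dx\oplus A\,dy)/A(2y\,dy-3x^2\,dx)$, so $\tr_A(\Omega^1_{A/\QQ})=A$. On the other hand, $A\cong\QQ(t)[x,y]_{(x,y)}/(y^2-x^3)$ has embedding dimension $2$ and is not regular.

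In your argument the failure is the claim that the generic $\Omega^d$ is a line. At a generic point it has rank $\binom{d+s}{d}$, so the form need not be decomposable. Even when it is (always, if $d=1$ as here), the resulting point of $\operatorname{Grass}^d$ does not extend the Gauss map, which takes values in $\operatorname{Grass}^{d+s}$.

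The paper's proof carries the same tacit restriction. Its step ``shrinking $X$, every irreducible component contains $x$ and has dimension $d$'' is possible only when $s=0$, since the components through $x$ have dimension $d+s$. So what either argument actually establishes is the case $s=0$. For general $A$ the exponent has to be $d+s$, and with that exponent the geometric argument goes through unchanged.

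The forward direction is fine, and for $s=0$ your argument is the paper's. You compose $\varphi$ with $\bigwedge^d(\Omega_{V/\kk}|_X)\twoheadrightarrow\Omega^d_{X/\kk}$, for an embedding $X\hookrightarrow V$ into a smooth $V$, which gives a point of $\mathbb{P}(\Omega^d_{V/\kk})$. The Pl\"ucker relations hold at the generic points, hence on the reduced $X$. So the point lies in $\operatorname{Grass}^d(\Omega_{V/\kk})$ and extends the Gauss map, the Nash transform is an isomorphism near $x$, and Nobile's theorem applies.

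Your one difference is using $\varphi$ directly, spread out to a neighbourhood of $x$ (possible since $\Omega^d$ is finitely presented). The paper instead first proves that $\Omega^d/\text{torsion}$ is invertible via \autoref{InvertibleTrace}. Your route is a legitimate shortcut.

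Three smaller corrections:
\begin{enumerate}
\item Decomposability must be tested for the induced form on the free module $\Omega_{V/\kk}|_X$, not on $\Omega^1_{A/\kk}$ itself, since Pl\"ucker coordinates need a free module.
\item The generic rank-one input is the transcendence-degree count behind \autoref{TorsionFreeQuotient}, not \autoref{lem:ann1}, which only gives nonvanishing of the localized trace.
\item Nobile's theorem is a statement over an algebraically closed field, so you need the base change to $\overline{\kk}$ and the descent of smoothness that the paper performs.
\end{enumerate}
Finally, the aside identifying the Nash transform with the blowup of an ``$\omega$-type reflexive hull'' plays no role and is best dropped.
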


Before proving this theorem, we first prepare some results on sheaves of differentials.

\begin{remark}
The statement of \autoref{TraceOmegaRegular} is translated as follows: Let \(\kk\) be a field of characteristic \(0\) and let \(X\) be a reduced scheme of finite type over \(\kk\).
Choose a point \(x \in X\) such that \(A \defeq \mcalO_{X, x}\) is equidimensional of dimension \(d\).
Then the \(\kk\)-scheme \(X\) is smooth at \(x\) if and only if \(\tr_A(\Omega^d_{A/\kk}) = A\).
\end{remark}

\begin{lemma} \label{TraceTorsionFreeQuotient}
    Let \(R\) be a ring and let \(M\) be an \(R\)-module.
    Then the trace ideal \(\tr_R(M)\) of \(M\) is identified with the trace ideal \(\tr_R(M/M_{\tors})\) of the torsion-free quotient \(M/M_{\tors}\) of \(M\).
\end{lemma}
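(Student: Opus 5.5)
The plan is to show that every \(R\)-linear map \(M \to R\) kills the torsion submodule. Then \(\Hom_R(M,R)\) and \(\Hom_R(M/M_{\tors},R)\) are identified via the quotient map, and the two traces coincide.

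First, fix the meaning of \(M_{\tors}\): it is the set of elements \(m \in M\) such that \(rm = 0\) for some non-zerodivisor \(r\) of \(R\). Equivalently, it is the kernel of \(M \to M \otimes_R Q(R)\), where \(Q(R)\) is the total ring of fractions. This set is an \(R\)-submodule. If \(rm = 0\) and \(sm' = 0\) with \(r,s\) non-zerodivisors, then \(rs\) is a non-zerodivisor killing \(m + m'\), and \(r\) kills \(am\) for every \(a \in R\). Write \(q \colon M \twoheadrightarrow M/M_{\tors}\) for the quotient map.

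Next, the key step. Let \(\varphi \in \Hom_R(M,R)\) and \(m \in M_{\tors}\), say \(rm = 0\) with \(r\) a non-zerodivisor. Then \(r\varphi(m) = \varphi(rm) = 0\) in \(R\). Since \(r\) is a non-zerodivisor on \(R\), this forces \(\varphi(m) = 0\). Hence \(\varphi\) factors uniquely as \(\varphi = \bar\varphi \circ q\) with \(\bar\varphi \in \Hom_R(M/M_{\tors},R)\), and \(\varphi(M) = \bar\varphi(M/M_{\tors})\) because \(q\) is surjective. This gives \(\tr_R(M) \subseteq \tr_R(M/M_{\tors})\).

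Conversely, every \(\psi \in \Hom_R(M/M_{\tors},R)\) yields \(\psi \circ q \in \Hom_R(M,R)\) with the same image, so \(\tr_R(M/M_{\tors}) \subseteq \tr_R(M)\). This reverse inclusion is also just \autoref{rem1}~(1) applied to the surjection \(q\), which holds in the ungraded setting by the same argument. Thus \(q^*\colon \Hom_R(M/M_{\tors},R) \to \Hom_R(M,R)\) is a bijection preserving images, and the two trace ideals are equal.

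I expect no serious obstacle. The only point needing care is to fix the definition of torsion via non-zerodivisors, so that the annihilating element is a non-zerodivisor on \(R\) itself and \(M_{\tors}\) is genuinely a submodule. For a non-domain \(R\), a notion of torsion based on arbitrary nonzero annihilators would break the argument.
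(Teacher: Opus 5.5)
Your proof is correct and matches the paper's argument: both show that any $\varphi\in\Hom_R(M,R)$ vanishes on $M_{\mathrm{tors}}$, since a non-zerodivisor $r$ with $rm=0$ gives $r\varphi(m)=0$ and hence $\varphi(m)=0$. Consequently the natural injection $\Hom_R(M/M_{\mathrm{tors}},R)\to\Hom_R(M,R)$ is a bijection preserving images, so the two trace ideals agree. Your extra check that $M_{\mathrm{tors}}$, defined via non-zerodivisors, is a submodule fills in a detail the paper leaves implicit.
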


\begin{proof}
    By the definition of the trace ideal, it suffices to show that the natural injection
    \begin{equation*}
        \Hom_R(M/M_{\tors}, R) \to \Hom_R(M, R)
    \end{equation*}
    is an isomorphism.
    Let \(f \in \Hom_R(M, R)\) be a homomorphism and let \(m \in M_{\tors}\) be a torsion element of \(M\).
    Then there exists a non-zero-divisor \(r \in R\) such that \(rm = 0\) and thus \(rf(m) = f(rm) = 0\).
    Since \(r\) is a non-zero-divisor, we have \(f(m) = 0\) and thus \(f\) factors through the torsion-free quotient \(M/M_{\tors}\).
\end{proof}

\begin{lemma} \label{InvertibleTrace}
    Let \(X\) be a reduced equidimensional scheme of finite type over a field \(\kk\) of characteristic \(0\) and let \(\mcalM\) be a torsion-free coherent sheaf on \(X\) which is of rank \(1\) on a dense open subset of \(X\).
    Choose a point \(x \in X\).
    The stalk \(\mcalM_x\) of \(\mcalM\) at \(x\) satisfies \(\tr_{\mcalO_{X, x}}(\mcalM_x) = \mcalO_{X, x}\) if and only if \(\mcalM\) is invertible on an open neighborhood of \(x\).
\end{lemma}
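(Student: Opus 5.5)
Set \(A \defeq \mcalO_{X,x}\) and \(M \defeq \mcalM_x\); since \(\mcalM\) is coherent, \(M\) is a finitely generated \(A\)-module. The implication from invertibility near \(x\) to \(\tr_A(M)=A\) is immediate: \(M\cong A\) is free of rank one, so the identity map \(M\to A\) is surjective. For the converse, assume \(\tr_A(M)=A\). By \autoref{rem1}~(4), applied in the local case, \(M\) has a free summand. Write \(M\cong A\oplus N\) for a finitely generated \(A\)-module \(N\). The task is then to show \(N=0\), and then spread \(M\cong A\) to an open neighbourhood of \(x\).

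First I will show \(N=0\) using the rank and torsion-freeness hypotheses. Since \(X\) is reduced, \(A\) is reduced, so \(A_{\mfrakq}\) is a field for every minimal prime \(\mfrakq\) of \(A\). Moreover \(\Ass(A)=\Min(A)\), so the non-zero-divisors of \(A\) are exactly the elements outside all minimal primes.

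Each minimal prime \(\mfrakq\) of \(A\) corresponds to a generic point of an irreducible component of \(X\) passing through \(x\). This generic point lies in the dense open set where \(\mcalM\) has rank \(1\), since a dense open set contains all generic points. Hence \(M_{\mfrakq}\) is a one-dimensional \(A_{\mfrakq}\)-vector space. From \(M_{\mfrakq}\cong A_{\mfrakq}\oplus N_{\mfrakq}\) we get \(N_{\mfrakq}=0\) for every \(\mfrakq\in\Min(A)=\Ass(A)\).

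As in the proof of \autoref{MinimalKill}, prime avoidance now gives a non-zero-divisor \(r\in A\) with \(rN=0\). So \(N\subseteq M_{\tors}\). Torsion-freeness of \(\mcalM\) (interpreted stalkwise: no nonzero element is killed by a non-zero-divisor) then forces \(N=0\), and hence \(M\cong A\).

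Finally I spread this out. Because \(\mcalM\) is coherent and \(\mcalM_x\cong\mcalO_{X,x}\), there is an open neighbourhood \(U\) of \(x\) on which \(\mcalM|_U\cong\mcalO_U\). This is the standard fact that finitely presented sheaves whose stalks are isomorphic are isomorphic near the point: lift the isomorphism and its inverse to sections, and the compositions agree with the identity on a neighbourhood. Thus \(\mcalM\) is invertible near \(x\).

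The main obstacle I expect is the rank bookkeeping. Two things must be checked carefully: that every minimal prime of \(\mcalO_{X,x}\) corresponds to a generic point inside the rank-one locus, and that "torsion-free" for the sheaf matches the stalkwise notion used in \autoref{TraceTorsionFreeQuotient}. Equidimensionality does not seem essential to this argument; only reducedness is used.
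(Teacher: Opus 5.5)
Your proof is correct and follows essentially the same route as the paper. The paper also takes a surjection $\mathcal{M}_x \to A$ (your splitting $M \cong A \oplus N$), uses the rank-one hypothesis at the minimal primes to see that its kernel vanishes generically, concludes the kernel is torsion and hence zero by torsion-freeness, and finishes with openness of the locally free locus; the only cosmetic difference is that it tensors with the total ring of fractions $Q(A)$ where you localize at each minimal prime and invoke prime avoidance, and you are right that neither equidimensionality nor characteristic $0$ is actually used.
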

\begin{proof}
    The `if' part is clear because the trace ideal of an invertible module is the whole ring.

    We show the `only if' part.
    Set \(A \defeq \mcalO_{X, x}\).
    Since \(A\) is local, the equality \(\tr_{A}(\mcalM_x) = A\) implies that there exists a surjective \(A\)-linear homomorphism \(\varphi \colon \mcalM_x \to A\).

    Since \(\mcalM\) is torsion-free and of rank \(1\) on a dense open subset of \(X\), the torsion-free \(A\)-module \(\mcalM_x\) is of rank \(1\) at any minimal prime ideal of \(A\).
    Therefore, we have an isomorphism \(\mcalM_x \otimes_A Q(A) \cong Q(A)\) of \(Q(A)\)-modules, where \(Q(A)\) is the total quotient ring of \(A\).
    Taking the tensor product of the exact sequence
    \begin{equation*}
        0 \to \Ker(\varphi) \to \mcalM_x \xrightarrow{\varphi} A \to 0
    \end{equation*}
    with \(Q(A)\) over \(A\), we obtain a surjective homomorphism \(Q(A) \cong \mcalM_x \otimes_A Q(A) \to Q(A)\) which implies that this is isomorphism and thus \(\Ker(\varphi) \otimes_A Q(A) = 0\).
    Since \(\Ker(\varphi)\) is a finitely generated \(A\)-module, \(\Ker(\varphi) \subseteq \mcalM_x\) is torsion but \(\mcalM_x\) is torsion-free and thus \(\Ker(\varphi) = 0\).
    This implies that \(\mcalM_x \cong A\).

    Since \(\mcalM\) is coherent, the locus where \(\mcalM\) is locally free of rank \(1\) is open in \(X\) and contains \(x\).
    So there exists an open neighborhood \(U\) of \(x\) such that \(\restr{\mcalM}{U}\) is a line bundle on \(U\).
\end{proof}

\begin{lemma} \label{TorsionFreeQuotient}
    Let \(\kk\) be a field of characteristic \(0\) and let \(X\) be a reduced equidimensional scheme of finite type over \(\kk\) of pure dimension \(d\).
    Let \(T \subseteq \Omega^d_{X/\kk}\) be the torsion subsheaf of \(\Omega^d_{X/\kk}\).
    Then the torsion-free quotient \(\Omega_{X/\kk}^d/T\) is of rank \(1\) on a dense open subset of \(X\).
\end{lemma}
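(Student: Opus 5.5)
Since \(X\) is reduced and of finite type over a field of characteristic \(0\) (hence perfect), the smooth locus \(U \subseteq X\) is open and dense: it contains the generic point of every irreducible component, because the local ring there is a field, which is regular, and regular points of a finite type \(\kk\)-scheme over a perfect field are smooth (\citeSta{038X}). I would restrict attention to this open set \(U\). Because \(X\) has pure dimension \(d\), every irreducible component of \(U\) also has dimension \(d\), so on each connected component of \(U\) the sheaf \(\Omega^1_{U/\kk}\) is locally free of rank equal to the local dimension, which is \(d\). Consequently \(\Omega^d_{U/\kk}=\bigwedge^d \Omega^1_{U/\kk}\) is an invertible sheaf on \(U\).

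Next I would compare \(\Omega^d_{X/\kk}/T\) with \(\Omega^d_{X/\kk}\) on \(U\). Formation of \(\Omega^d\) commutes with restriction to open subsets. Formation of the torsion subsheaf also commutes with restriction: torsion means annihilated by a local non-zero-divisor, and both conditions are local. On \(U\), where \(\mathcal{O}_U\) is reduced, \(\Omega^d_{U/\kk}\) is locally free and therefore torsion-free. Hence \(T|_U=0\) and \((\Omega^d_{X/\kk}/T)|_U\cong \Omega^d_{U/\kk}\), which is invertible.

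To conclude, let \(\eta\) be a generic point of \(X\). Then \(\mathcal{O}_{X,\eta}\) is a field \(K\) with \(\operatorname{trdeg}_\kk K=d\), and the stalk of the quotient at \(\eta\) is \(\Omega^d_{K/\kk}\). This is one-dimensional over \(K\), since \(\dim_K\Omega^1_{K/\kk}=d\) in characteristic \(0\), which is the same computation as in \autoref{prop:3.8}. So the quotient has rank \(1\) at every generic point and is locally free of rank \(1\) on the dense open set \(U\).

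The only point requiring care is the density of the smooth locus, where both reducedness and characteristic \(0\) are used. The rank computation is routine once pure dimensionality guarantees that every component has rank exactly \(d\).
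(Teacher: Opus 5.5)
Your argument is correct and is essentially the paper's proof: the smooth locus is open and dense, \(\Omega^d_{X/\kk}\) restricts to a line bundle there, so the torsion subsheaf vanishes on it and the quotient is invertible on that dense open set. You spell out details the paper leaves to the cited Stacks Project tag (density via generic points, rank \(d\) from pure dimensionality, compatibility of torsion with restriction to opens); your final generic-point paragraph is only a consistency check, since the generic points already lie in \(U\).
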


\begin{proof}
    Since \(X\) is reduced and equidimensional of pure dimension \(d\), the smooth locus \(X_{\sm}\) of \(X\) is open and dense in \(X\) and the restriction \(\restr{\Omega^d_{X/\kk}}{X_{\sm}}\) is a line bundle on \(X_{\sm}\) (\citeSta{02G1}).
    In particular, the restriction \(\restr{T}{X_{\sm}}\) of \(T\) is zero and thus the restriction \(\restr{\Omega^d_{X/\kk}/T}{X_{\sm}}\) of \(\Omega^d_{X/\kk}/T\) is a line bundle on \(X_{\sm}\).
    This implies that the torsion-free sheaf \(\Omega^d_{X/\kk}/T\) is of rank \(1\) on a dense open subset of \(X\).
\end{proof}

We now prove the main result of this section.

\begin{proof}[Proof of \autoref{TraceOmegaRegular}]
    The `only if' part follows from the fact that \(\Omega^d_{A/\kk}\) is invertible if \(A\) is regular (and thus smooth over \(\kk\)).

    We will show the `if' part.
    Choose a finitely generated \(\kk\)-algebra \(B\) and a prime ideal \(\mfrakp \in \Spec(B)\) such that \(A \cong B_{\mfrakp}\).
    Let \(X \defeq \Spec(B)\) and let \(x \in X\) be the point corresponding to \(\mfrakp\).
    Since \(A\) is reduced and equidimensional, shrinking \(X\) if necessary, we may assume that \(X\) is reduced and every irreducible component of \(X\) contains \(x\) and has dimension \(d\).
    In particular, \(X\) is reduced and equidimensional of pure dimension \(d\) and \(\mcalO_{X, x} \cong A\) holds.

    Let \(\mcalT \subseteq \Omega^d_{X/\kk}\) be the torsion subsheaf of \(\Omega^d_{X/\kk}\) and let \(\mcalM \defeq \Omega^d_{X/\kk}/\mcalT\) be the torsion-free quotient.
    By \autoref{TorsionFreeQuotient}, \(\mcalM\) is a torsion-free coherent sheaf and of rank \(1\) on a dense open subset of \(X\).

    The localization \(\mcalM_x \cong \Omega^d_{A/\kk}/\mcalT_x\) is the torsion-free quotient of \(\Omega^d_{A/\kk}\).
    By \autoref{TraceTorsionFreeQuotient}, we have \(\tr_A(\mcalM_x) = \tr_A(\Omega^d_{A/\kk}) = A\).
    Shrinking \(X\) if necessary, we may assume that \(\mcalM\) is a line bundle on \(X\) by \autoref{InvertibleTrace}.

    Since \(X\) is an affine scheme of finite type over a field \(\kk\), we can fix a closed immersion \(X \hookrightarrow V\) into a smooth affine \(\kk\)-scheme \(V\) of dimension \(n (\geq d)\).
    By \(X \hookrightarrow V \to \Spec(\kk)\), we have an exact sequence
    \begin{equation*}
        \restr{\Omega_{V/\kk}}{X} \to \Omega_{X/\kk} \to \Omega_{X/V} \to 0
    \end{equation*}
    of coherent sheaves on \(X\).
    The last term vanishes and \(\restr{\Omega_{V/\kk}}{X}\) is finite locally free on \(X\) of rank \(n\).
    So we have a surjective homomorphism
    \begin{equation*}
        \restr{\Omega^d_{V/\kk}}{X} \cong \bigwedge^d(\restr{\Omega_{V/\kk}}{X}) \twoheadrightarrow \Omega^d_{X/\kk} \twoheadrightarrow \mcalM
    \end{equation*}
    of vector bundles on \(X\).
    Since \(\mcalM\) is invertible, by the functor of points, this surjective homomorphism corresponds to a morphism
    \begin{equation} \label{MorphismsToProjectiveBundle}
        r \colon X \to \setP(\Omega^d_{V/\kk}) = \Grass^1(\Omega^d_{V/\kk})
    \end{equation}
    of \(V\)-schemes by \cite[Theorem 13.32]{gortz2020Algebraic}, where \(\setP(\Omega^d_{V/\kk}) = \Grass^1(\Omega^d_{V/\kk})\) is the projective bundle over \(V\) associated to the vector bundle \(\Omega^d_{V/\kk}\).

    On the other hand, since \(\restr{\Omega_{X/\kk}}{X_{\sm}}\) is locally free of rank \(d\), the surjection
    \begin{equation} \label{SurjectionOnSmoothLocus}
        \restr{\Omega_{V/\kk}}{X_{\sm}} \twoheadrightarrow \restr{\Omega_{X/\kk}}{X_{\sm}} \xrightarrow{\cong} \restr{\mcalM}{X_{\sm}}
    \end{equation}
    of vector bundles on \(X_{\sm}\) corresponds to a morphism
    \begin{equation*}
        \gamma \colon X_{\sm} \to \Grass^d(\Omega_{V/\kk})
    \end{equation*}
    of \(V\)-schemes, where \(\Grass^d(\Omega_{V/\kk})\) is the Grassmannian of quotients of \(\Omega_{V/\kk}\) of rank \(d\) (\cite[\S 8.6]{gortz2020Algebraic}).
    The restriction \(\restr{r}{X_{\sm}}\) corresponds to the exterior power of the surjection \eqref{SurjectionOnSmoothLocus}, and thus the morphism \(\restr{r}{X_{\sm}}\) decomposes as
    \begin{equation*}
        \restr{r}{X_{\sm}} \colon X_{\sm} \xrightarrow{\gamma} \Grass^d(\Omega_{V/\kk}) \xrightarrow{\omega^d_{\Omega_{V/\kk}}} \Grass^1(\Omega^d_{V/\kk}) = \setP(\Omega^d_{V/\kk}),
    \end{equation*}
    where \(\omega^d_{\Omega_{V/\kk}}\) is the Pl\"uker embedding induced from the exterior power \(\wedge^d\) of vector bundles, which is a closed immersion (\cite[Remark 8.24]{gortz2020Algebraic}).

    Choose the ideal sheaf \(\mcalI\) of \(\setP(\Omega^d_{V/\kk})\) corresponding to the closed immersion \(\omega^d_{\Omega_{V/\kk}}\).
    Then the pullback \(r^*\mcalI \subseteq \mcalO_X\) restricts to the zero ideal on the dense open subset \(X_{\sm}\) because \(\restr{r}{X_{\sm}}\) factors through the closed immersion \(\omega^d_{\Omega_{V/\kk}}\).
    Any section of \(r^*\mcalI\) is zero on \(X_{\sm}\) and thus is zero on any generic point of \(X\).
    Since \(X\) is reduced, such a section is zero on \(X\) and thus \(r^*\mcalI = 0\).
    This implies that the morphism \(r\) decomposes as
    \begin{equation*}
        r \colon X \xrightarrow{\exists \widetilde{\gamma}} \Grass^d(\Omega_{V/\kk}) \xrightarrow{\omega^d_{\Omega_{V/\kk}}} \Grass^1(\Omega^d_{V/\kk}) = \setP(\Omega^d_{V/\kk}),
    \end{equation*}
    where \(\widetilde{\gamma}\) is a morphism of \(V\)-schemes.

    Taking the base change to an algebraic closure \(\overline{\kk}\) of \(\kk\), we have a morphism
  \[
\widetilde{\gamma}_{\overline{\kk}} \colon X_{\overline{\kk}} \to
\Grass^d(\Omega_{V/\kk})_{\overline{\kk}}
\cong
\Grass^d(\Omega_{V_{\overline{\kk}}/\overline{\kk}})
\]
    of \(V_{\overline{\kk}}\)-schemes extending the morphism \(\gamma_{\overline{\kk}} \colon (X_{\sm})_{\overline{\kk}} \to \Grass^d(\Omega_{V_{\overline{\kk}}/\overline{\kk}})\) corresponding to the vector bundle \(\Omega_{(X_{\sm})_{\overline{\kk}}/\overline{\kk}}\).
    Let \(\widehat{X_{\overline{\kk}}}\) be the Nash transform of \(X_{\overline{\kk}}\), which is the closure of the graph \(\Gamma_{\gamma_{\overline{\kk}}}\) of \(\gamma_{\overline{\kk}}\) in \(X_{\overline{\kk}} \times_{V_{\overline{\kk}}} \Grass^d(\Omega_{V_{\overline{\kk}}/\overline{\kk}})\).
    Since \(\widetilde{\gamma}_{\overline{\kk}}\) is an extension of \(\gamma_{\overline{\kk}}\) from the dense open subset \((X_{\sm})_{\overline{\kk}}\) to \(X_{\overline{\kk}}\), the Nash transform \(\widehat{X_{\overline{\kk}}}\) is isomorphic to the graph \(\Gamma_{\widetilde{\gamma}_{\overline{\kk}}}\) of \(\widetilde{\gamma}_{\overline{\kk}}\) and thus the morphism \(\widehat{X_{\overline{\kk}}} \to X_{\overline{\kk}}\) induced from the first projection is an isomorphism.
    By Nobile's theorem (\cite[Theorem 2]{nobile1975Properties}), it follows that \(X_{\overline{\kk}}\) is smooth at \(x_{\overline{\kk}}\).
    Then \(X\) is smooth at \(x\) (e.g., \cite[Proposition 18.67]{gortz2023Algebraic}) and thus \(A \cong \mcalO_{X, x}\) is a regular local ring.
\end{proof}


\begin{bibdiv}
\begin{biblist}*{labels={alphabetic}}

\bib{aoyama1983some}{article}{
  author={Aoyama, Y{\^o}ichi},
  title={Some basic results on canonical modules},
  date={1983},
  journal={Journal of Mathematics of Kyoto University},
  volume={23},
  number={1},
  pages={85--94},
}

\bib{aoyama1985endomorphism}{article}{
  author={Aoyama, Y{\^o}ichi},
  author={Goto, Shiro},
  title={On the endomorphism ring of the canonical module},
  date={1985},
  journal={Journal of Mathematics of Kyoto University},
  volume={25},
  number={1},
  pages={21--30},
}

\bib{BH}{book}{
  author={Bruns, Winfried},
  author={Herzog, J\"{u}rgen},
  title={Cohen--Macaulay rings},
  series={Cambridge Studies in Advanced Mathematics},
  publisher={Cambridge University Press, Cambridge},
  date={1993},
  volume={39},
  isbn={0-521-41068-1},
}

\bib{celikbas2023traces}{misc}{
  author={Celikbas, Ela},
  author={Herzog, J\"{u}rgen},
  author={Kumashiro, Shinya},
  title={Traces of semi-invariants},
  date={2023},
  note={arXiv:2312.00983},
  url={https://arxiv.org/abs/2312.00983},
}

\bib{costa1979Polynomial}{article}{
  author={Costa, Douglas~L.},
  title={The polynomial rank of a commutative ring},
  date={1979},
  journal={Communications in Algebra},
  volume={7},
  number={14},
  pages={1509--1530},
}

\bib{DaoKobayashiTakahashi2021}{article}{
  author={Dao, Hailong},
  author={Kobayashi, Toshinori},
  author={Takahashi, Ryo},
  title={Trace ideals of canonical modules, annihilators of {Ext} modules, and classes of rings close to being {Gorenstein}},
  date={2021},
  journal={Journal of Pure and Applied Algebra},
  volume={225},
  number={9},
  pages={106655},
}

\bib{E}{book}{
  author={Eisenbud, David},
  title={Commutative algebra},
  series={Graduate Texts in Mathematics},
  publisher={Springer-Verlag, New York},
  date={1995},
  volume={150},
  isbn={0-387-94268-8},
  url={https://doi.org/10.1007/978-1-4612-5350-1},
  note={With a view toward algebraic geometry},
}

\bib{Ficarra2025}{article}{
  author={Ficarra, Antonino},
  title={The canonical trace of {Cohen--Macaulay} algebras of codimension $2$},
  date={2025},
  journal={Proceedings of the American Mathematical Society},
  volume={153},
  number={8},
  pages={3275--3289},
}

\bib{FicarraHerzogStamateTrivedi2024}{article}{
  author={Ficarra, Antonino},
  author={Herzog, J\"{u}rgen},
  author={Stamate, Dumitru~I.},
  author={Trivedi, Vijaylaxmi},
  title={The canonical trace of determinantal rings},
  date={2024},
  journal={Archiv der Mathematik},
  volume={123},
  number={5},
  pages={487--497},
}

\bib{gortz2020Algebraic}{book}{
  author={G\"ortz, Ulrich},
  author={Wedhorn, Torsten},
  title={Algebraic geometry I: Schemes. With examples and exercises},
  edition={2},
  series={Springer Studium Mathematik -- Master},
  publisher={Springer Spektrum},
  date={2020},
  url={https://www.springer.com/gp/book/9783658307325},
}

\bib{gortz2023Algebraic}{book}{
  author={G\"ortz, Ulrich},
  author={Wedhorn, Torsten},
  title={Algebraic geometry II: Cohomology of schemes. With examples and exercises},
  series={Springer Studium Mathematik -- Master},
  publisher={Springer Fachmedien},
  date={2023},
  url={https://link.springer.com/10.1007/978-3-658-43031-3},
}

\bib{goto1978graded}{article}{
  author={Goto, Shiro},
  author={Watanabe, Keiichi},
  title={On graded rings, I},
  date={1978},
  journal={Journal of the Mathematical Society of Japan},
  volume={30},
  number={2},
  pages={179--213},
}

\bib{M2}{misc}{
  author={Grayson, Daniel~R.},
  author={Stillman, Michael~E.},
  title={Macaulay2, a software system for research in algebraic geometry},
  note={Available at \url{http://www2.macaulay2.com}},
}

\bib{hartshorne2013algebraic}{book}{
  author={Hartshorne, Robin},
  title={Algebraic geometry},
  publisher={Springer Science \& Business Media},
  date={2013},
}

\bib{herzog2019trace}{article}{
  author={Herzog, J\"{u}rgen},
  author={Hibi, Takayuki},
  author={Stamate, Dumitru~I.},
  title={The trace of the canonical module},
  date={2019},
  journal={Israel Journal of Mathematics},
  volume={233},
  number={1},
  pages={133--165},
}



\bib{iyengar2016annihilation}{article}{
  author={Iyengar, Srikanth~B.},
  author={Takahashi, Ryo},
  title={Annihilation of cohomology and strong generation of module categories},
  date={2016},
  journal={International Mathematics Research Notices},
  volume={2016},
  number={2},
  pages={499--535},
}

\bib{Kimura2026Schubert}{misc}{
  author={Kimura, Kaito},
  title={Trace ideals of canonical modules over {Schubert} cycles and determinantal rings},
  date={2026},
  note={arXiv:2601.18387},
  url={https://arxiv.org/abs/2601.18387},
}

\bib{KumashiroMiyashita2025}{misc}{
  author={Kumashiro, Shinya},
  author={Miyashita, Sora},
  title={Canonical traces of fiber products and their applications},
  date={2025},
  note={arXiv:2506.04899},
  url={https://arxiv.org/abs/2506.04899},
}

\bib{lescot2006serie}{incollection}{
  author={Lescot, Jack},
  title={La s{\'e}rie de Bass d'un produit fibr{\'e} d'anneaux locaux},
  date={2006},
  booktitle={S{\'e}minaire d'alg{\`e}bre Paul Dubreil et Marie-Paule Malliavin: Proceedings, Paris 1982 (35{\`e}me ann{\'e}e)},
  publisher={Springer},
  pages={218--239},
}

\bib{lindo2017trace}{article}{
  author={Lindo, Haydee},
  title={Trace ideals and centers of endomorphism rings of modules over commutative rings},
  date={2017},
  journal={Journal of Algebra},
  volume={482},
  pages={102--130},
}

\bib{miller2019co}{article}{
  author={Miller, Claudia},
  author={Vassiliadou, Sophia},
  title={(co)torsion of exterior powers of differentials over complete intersections},
  date={2019},
  journal={Journal of Singularities},
}

\bib{Miyashita2024LinearGeneralization}{misc}{
  author={Miyashita, Sora},
  title={A linear generalization of the nearly {Gorenstein} property, with applications to {Veronese} subalgebras},
  date={2024},
  note={arXiv:2407.05629},
  url={https://arxiv.org/abs/2407.05629},
}

\bib{MiyashitaVarbaro2025}{article}{
  author={Miyashita, Sora},
  author={Varbaro, Matteo},
  title={The canonical trace of {Stanley--Reisner} rings that are {Gorenstein} on the punctured spectrum},
  date={2025},
  journal={International Mathematics Research Notices},
  number={12},
  pages={rnaf176},
}

\bib{nobile1975Properties}{article}{
  author={Nobile, A.},
  title={Some properties of the {Nash} blowing-up},
  date={1975},
  journal={Pacific Journal of Mathematics},
  volume={60},
  number={1},
  pages={297--305},
  url={https://projecteuclid.org/journals/pacific-journal-of-mathematics/volume-60/issue-1/Some-properties-of-the-Nash-blowing-up/pjm/1102868640.full},
}

\bib{stacks-project}{misc}{
  label={Sta},
  author={{The Stacks Project Authors}},
  title={Stacks Project},
  url={https://stacks.math.columbia.edu},
}

\bib{vasconcelos1991computing}{article}{
  author={Vasconcelos, Wolmer~V.},
  title={Computing the integral closure of an affine domain},
  date={1991},
  journal={Proceedings of the American Mathematical Society},
  volume={113},
  number={3},
  pages={633--638},
}

\bib{wright1981jacobian}{article}{
  author={Wright, David},
  title={On the Jacobian conjecture},
  date={1981},
  journal={Illinois Journal of Mathematics},
  volume={25},
  number={3},
  pages={423--440},
}

\bib{zariski1965Studies}{article}{
  author={Zariski, Oscar},
  title={Studies in equisingularity. I. Equivalent singularities of plane algebroid curves},
  date={1965},
  journal={American Journal of Mathematics},
  volume={87},
  number={2},
  pages={507--536},
  url={https://www.jstor.org/stable/2373019},
}

\end{biblist}
\end{bibdiv}
\end{document}